\pdfoutput=1
\documentclass[a4paper,11pt]{amsart}
\usepackage[hmarginratio={1:1},vmarginratio={1:1},lmargin=60.0pt,tmargin=60.0pt]{geometry}

\allowdisplaybreaks

\usepackage[numbers]{natbib}
\usepackage{latexsym,mathtools,amssymb,stmaryrd}
\usepackage{enumitem}
\usepackage{xcolor}
\usepackage{diagbox,makecell,caption}
\usepackage{tikz}
\usetikzlibrary{cd,decorations.markings,arrows.meta,positioning,calc,tqft}
\usepackage{aliascnt,etoolbox}
\usepackage[hypertexnames=false]{hyperref}
\usepackage{bookmark}
\usepackage{listings}

\setlist[enumerate]{itemsep=0.15cm,label=\emph{\upshape(\alph*)}}
\setlist[enumerate,2]{itemsep=0.15cm,label=\emph{\upshape(\roman*)}}

\definecolor{spinach}{RGB}{46,139,87}
\definecolor{tomato}{RGB}{255,99,71}
\definecolor{orchid}{RGB}{143,40,194}
\definecolor{lava}{RGB}{207,16,32}
\definecolor{cream}{RGB}{255,253,208}
\definecolor{verdigris}{RGB}{67,179,174}
\definecolor{mydarkblue}{RGB}{10,10,170}

\let\emph\relax
\DeclareTextFontCommand{\emph}{\bfseries\em}
\renewcommand{\dots}{\text{...}}

\newcommand{\eg}{\text{e.g.}}
\newcommand{\ie}{\text{i.e.}}
\newcommand{\Z}{\mathbb{Z}}
\newcommand{\K}{\mathbb{K}}
\newcommand{\C}{\mathbb{C}}
\newcommand{\End}{\mathrm{End}}
\newcommand{\Hom}{\mathrm{Hom}}
\newcommand{\id}{id}

\newcommand{\paraa}{\alpha}
\newcommand{\parab}{\beta}
\newcommand{\parac}{\gamma}

\newcommand{\cob}{\mathbf{2Cob}}
\newcommand{\mcob}[1]{\mathbf{MCob}_{#1}}
\newcommand{\mucob}[3]{\mathbf{MCob}_{#1, #2, #3}}

\tikzset{
anchorbase/.style={baseline={([yshift=#1]current bounding box.center)}},
anchorbase/.default={-0.5ex},
tinynodes/.style={font=\tiny,text height=0.25ex,text depth=0.05ex},
mor/.style={line width=0.75,color=black,fill=cream},
usual/.style={line width=1.2,color=black},
match/.style={line width=1.2, densely dotted, color=verdigris},
ribbon/.style={double, double distance=4pt, line width=0.8pt, black},
ribbon boundary/.style={line width=0.8pt, black},
vertex/.style={circle, draw, fill=white, minimum size=14pt, line width=0.8pt},
hol/.style={
decoration={markings,
post length=0.25mm,
pre length=0.25mm,
mark=at position #1 with {\node[circle,radius=0.15cm,inner sep=-1.2pt,draw,color=black,fill=white]{};}
},
postaction={decorate}
},
mob/.style={
decoration={markings,
post length=0.25mm,
pre length=0.25mm,
mark=at position #1 with {\node[circle,radius=0.15cm,inner sep=-1.2pt,draw, color=tomato,fill=tomato]{};}
},
postaction={decorate}
},
dot/.style={
decoration={markings,
post length=0.25mm,
pre length=0.25mm,
mark=at position #1 with {\node[circle,radius=0.5cm,inner sep=-1.5pt,color=black,fill=black]{};}
},
postaction={decorate}
},
dot/.default=1
}

\let\oldlightning\lightning
\renewcommand{\lightning}{\textcolor{tomato}{\pmb{\oldlightning}}}

\def\NewTheorem#1{%
\newaliascnt{#1}{equation}%
\newtheorem{#1}[#1]{#1}%
\aliascntresetthe{#1}%
\expandafter\def\csname #1autorefname\endcsname{#1}%
}
\def\equationautorefname~#1\null{(#1)\null}

\numberwithin{equation}{subsection}

\NewTheorem{Proposition}
\NewTheorem{Theorem}
\NewTheorem{Lemma}
\theoremstyle{definition}
\NewTheorem{Definition}
\AtEndEnvironment{Definition}{\null\hfill$\Diamond$}%
\NewTheorem{Notation}
\AtEndEnvironment{Notation}{\null\hfill$\Diamond$}%
\NewTheorem{Example}
\AtEndEnvironment{Example}{\null\hfill$\Diamond$}%
\theoremstyle{remark}
\NewTheorem{Remark}
\AtEndEnvironment{Remark}{\null\hfill$\Diamond$}%

\hypersetup{
pdftoolbar=true,
pdfmenubar=true,
pdffitwindow=false,
pdfstartview={FitH},
pdftitle={Categorification of some Penrose polynomials},
pdfauthor={Daniel W. Collison and Daniel Tubbenhauer},
pdfsubject={},
pdfcreator={Daniel W. Collison and Daniel Tubbenhauer},
pdfproducer={Daniel W. Collison and Daniel Tubbenhauer},
pdfkeywords={},
pdfnewwindow=true,
colorlinks=true,
linkcolor=mydarkblue,
citecolor=teal,
filecolor=magenta,
urlcolor=orchid,
linkbordercolor=lava,
citebordercolor=teal,
urlbordercolor=orchid,
linktocpage=true
}

\newcommand{\nnfootnote}[1]{%
\begin{NoHyper}
\renewcommand\thefootnote{}\footnote{#1}%
\addtocounter{footnote}{-1}%
\end{NoHyper}
}

\definecolor{codegreen}{rgb}{0,0.6,0}
\definecolor{codepurple}{rgb}{0.58,0,0.82}
\definecolor{backcolor}{rgb}{0.95,0.95,0.92}

\lstdefinestyle{mystyle}{
backgroundcolor=\color{backcolor},
commentstyle=\color{codegreen},
keywordstyle=\color{magenta},
stringstyle=\color{codepurple},
basicstyle=\ttfamily\footnotesize,
breakatwhitespace=false,
breaklines=true,
captionpos=b,
keepspaces=true,
showspaces=false,
showstringspaces=false,
showtabs=false,
tabsize=2
}
\def\makeautorefname#1#2{\csdef{#1autorefname}{#2}}
\makeautorefname{section}{Section}%
\makeautorefname{subsection}{Section}%
\makeautorefname{subsubsection}{Section}%

\begin{document}
\title[Categorification of some Penrose polynomials]{Categorification of some Penrose polynomials}
\author[D. W. Collison and D. Tubbenhauer]{Daniel W. Collison and Daniel Tubbenhauer}

\address{D.W.C.: The University of Sydney, School of Mathematics and Statistics F07, NSW 2006, Australia}
\email{daniel.collison@sydney.edu.au}

\address{D.T.: The University of Sydney, School of Mathematics and Statistics F07, Office Carslaw 827, NSW 2006, Australia, \href{http://www.dtubbenhauer.com}{www.dtubbenhauer.com}, https://orcid.org/0000-0001-7265-5047}
\email{daniel.tubbenhauer@sydney.edu.au}

\begin{abstract}
We construct doubly- and triply-graded Penrose-type homologies for ribbon graphs.
The construction is a TQFT-valued cube of resolutions built from
two-dimensional cobordisms, which may be nonorientable. Their Euler
characteristics recover specializations of some Penrose polynomials; in
particular, the four color case comes with a
refinement of the classical Penrose criterion.
\end{abstract}

\nnfootnote{\textit{Mathematics Subject Classification 2020.} Primary: 05C31, 57R56; Secondary: 05C15, 57K18, 57M15.}
\nnfootnote{\textit{Keywords.} Penrose polynomial, graph coloring, four color theorem, topological quantum field theories, categorification, nonorientable cobordisms.}

\addtocontents{toc}{\protect\setcounter{tocdepth}{1}}

\maketitle

\tableofcontents

\section{Introduction}\label{S:Introduction}

In this paper, we categorify some Penrose polynomials.

\subsection{Some history}

The Penrose polynomial is one of the classical meeting points of graph
coloring, topology, and diagrammatics. Its origin is Penrose's graphical
calculus for tensor webs \cite{Pen-negative}, which detects Tait colorings in the planar trivalent case: the evaluation at 3 counts
proper 3-edge colorings, or equivalently, 4-face colorings after planar duality.
Thus the Penrose polynomial is closely tied to the four color theorem, but in a
local, diagrammatic form. Aigner developed the corresponding Penrose polynomial
for plane graphs \cite{Aig-penrose}, which Ellis-Monaghan--Moffatt later extended to embedded graphs and related to twisted duality, transition
polynomials, and ribbon-graph structure \cite{ElMo-penrose}; see also
\cite{ElKaMo-edgecolour} for a useful account of how Penrose's coloring
calculus fits into the world of topological graph polynomials.

The feature that makes the Penrose polynomial so adaptable is that it is a
local state sum. Similarly to lattice models in statistical mechanics, the construction involves resolving local pieces, attaching weights, and summing over
the resulting states. 
The resulting polynomial is thus closely related to Jaeger's transition polynomial
\cite{Jaeger-transition} and the topological transition polynomial viewpoint
for ribbon graphs, as well as the Bollob\'as--Riordan polynomial,
one of the basic extensions of the Tutte polynomial to graphs on surfaces
\cite{BoRi-surfaces}, and also Chmutov's partial-duality framework
\cite{Chmutov-duality}; in all these instances, graph polynomials are not
isolated gadgets: they are different shadows of local operations on embedded
graphs.

The second relevant history is homological. Khovanov homology showed that a
local state sum can be the Euler characteristic of a more refined invariant
\cite{Kho-jones} (such a refinement is often called categorification), 
and Bar-Natan's formulation made the underlying
(1+1 or 2D) TQFT structure explicit \cite{BN-cobordisms}. Soon after,
similar ideas were applied to graph polynomials: for example,
Helme-Guizon--Rong categorified the chromatic polynomial
\cite{HeGuRo-chromatic}, Jasso-Hernandez--Rong categorified a version of the
Tutte polynomial \cite{JaRo-tutte}, and Loebl--Moffatt categorified the
chromatic polynomial of fatgraphs and the Bollob\'as--Riordan polynomial
\cite{LoMo-fatgraphs}, to name a few; while not technically used below, these papers
explain the guiding philosophy: a cube of local resolutions can remember more
than its Euler characteristic.

The closest relatives of the present paper come from two directions. The first direction
is the graph coloring homology story built from perfect matchings and TQFTs.
Baldridge constructed a cohomology theory for planar trivalent graphs equipped
with a perfect matching; its graded Euler characteristic is the 2-factor
polynomial \cite{Bal-cohomology}. Baldridge--McCarty then presented an unoriented TQFT interpretation of the Penrose
polynomial in which filtered homology counts face
colorings of ribbon graphs \cite{BaMc-graphcoloring}. The second direction is virtual link
homology. There is a specific ``funny face'' that appears in the cube of resolutions, which can be dealt with in a natural way by introducing nonorientability into
Bar-Natan's cobordism picture; see \eg \, \cite{Man-vKh,Ta-uhqft,Tu-vkh}. Both
perspectives are important for us: perfect matchings give the cube, and
nonorientable cobordisms provide the extra local map. 

The present paper makes use of the above ideas, with the crutial difference being that we use a different local ingredient.

\begin{Remark}
There is also a representation-theoretic reading of the above story. Already
Penrose's original graphical calculus is a tensor calculus: trivalent vertices
represent invariant tensors and edges represent contractions; in the planar
trivalent case, this is closely related to the \(\mathfrak{so}_3\) weight-system
evaluation (or equivalently, \(\mathrm{SO}_3\)-webs), which is called even
Temperley--Lieb or even Verlinde calculus. Using the same language, Tait colorings and
the four color theorem can be reformulated in representation-theoretic terms;
see \eg \,
\cite{BN-lie-4ct,MoPeSn-categories-trivalent-vertex,TeLi-the-tl-paper,Tu-web-reps,Ya-invariant-graphs}, and for categorified versions of this viewpoint using foams and gauge theory, see \eg \, \cite{KhRo-foam,KrMr-tait}.
\end{Remark}

\subsection{This paper's contribution}

We work with trivalent ribbon graphs equipped with a perfect matching.

\begin{Remark}
We will see in \autoref{SS:Graphs} that this is not a restriction.
\end{Remark}

Resolving the matching edges produces a cube of states. As usual, an edge of
the cube can either merge two circles or split one circle into two. In our setting,
there is a third possibility: one circle stays one circle, but the local
surface contribution is nonorientable. Algebraically, the first two moves are
controlled by the multiplication and comultiplication of a Frobenius algebra respectively,
while the third move is controlled by one extra endomorphism.

Before introducing the formal terminology, the algebraic input can therefore be
summarized as follows: take a commutative Frobenius algebra \(V\), with
multiplication \(\mu\) and comultiplication \(\Delta\), together with an
endomorphism \(m\colon V\to V\). The compatibility relations dictate that \(m\)
can be moved through multiplication, and that
\begin{gather}\label{Eq:Rel}
m^2=\mu\circ\Delta,
\end{gather}
which is a key relation required for the new square faces in the cube to
commute. The usual sign assignment then turns the cube into a
chain complex. In slogan form,
\[
\setlength{\fboxsep}{3pt}
\colorbox{blue!6}{
\begin{minipage}{0.78\textwidth}
\centering
\emph{A Penrose-type state sum can be lifted to a homology theory by allowing
one nonorientable local map.}
\end{minipage}}
\]
The corresponding nonorientable TQFTs (called M\"obius TQFTs or MTQFTs), together with the associated Frobenius algebras 
(called M{\"o}bius Frobenius algebras), are introduced later in \autoref{S:TQFT} and \autoref{S:Frob}. The construction itself
is the important point: a cube of resolutions with three local maps,
\(\mu,\Delta,m\).

Our main result is that the resulting homology is an invariant of
perfect matching ribbon graphs. Its graded Euler characteristic is the bracket
defined by the local relations in \autoref{D:mobpoly}, repeated here without further explanation:
\begin{align*}
\Bigg\langle\begin{tikzpicture}[anchorbase,yscale=-1,scale=0.75]
\draw[usual] (0,0) to (0.5,0.25);
\draw[usual] (0.5,0.25) to (1,0);
\draw[match] (0.5,0.25) to (0.5,1);
\draw[usual] (0,1.25) to (0.5,1);
\draw[usual] (1,1.25) to (0.5, 1);
\node at (0.5,0.25)[circle,fill,inner sep=1.55pt]{};
\node at (0.5,1)[circle,fill,inner sep=1.55pt]{};
\end{tikzpicture} \Bigg\rangle
\quad &= \quad
A \, \Bigg\langle\begin{tikzpicture}[anchorbase,yscale=-1,scale=0.75]
\draw[usual] (0,0) to (0,1);
\draw[usual] (0.5,0) to (0.5,1);
\end{tikzpicture} \Bigg\rangle \quad + \quad B \,\Bigg\langle\begin{tikzpicture}[anchorbase,yscale=-1,scale=0.75]
\draw[usual] (0,0) to (0.5,1);
\draw[usual] (0.5,0) to (0,1);
\end{tikzpicture} \Bigg\rangle, \\
\Bigg\langle\begin{tikzpicture}[anchorbase,yscale=-1,scale=0.75]
\filldraw[color=black, fill=white, thick](-1,0) circle (0.5);
\end{tikzpicture}\Bigg\rangle \quad &= \quad C, \\
\langle \Gamma_1 \sqcup \Gamma_2 \rangle \quad &= \quad \langle \Gamma_1 \rangle \cdot \langle \Gamma_2 \rangle,
\end{align*}
with
\[
A=1,\qquad
B=-q_1^{s_1}\cdots q_m^{s_m},\qquad
C=q\mathrm{dim}(V),
\]
where \(q\mathrm{dim}\) is the graded dimension, and the $q_i$ are grading variables. As
usual, the bracket is the
shadow of the homology.

We then construct an explicit family of examples. Given an integer \(n>0\), set
\begin{gather}\label{Eq:Vn}
V_n=\mathbb{Z}[\tfrac{1}{3n}][x,y]/(x^n,y^3-xy).
\end{gather}
A suitable Frobenius trace, together with a suitable choice of endomorphism \(m\), gives the
required algebraic input. The grading is by \(\Z/n\Z\times\Z/2\Z\) for \(n\)
even, and by \(\Z/n\Z\) for \(n\) odd, plus, in both cases, a $\Z$-grading from the homology.
The homology is therefore either doubly- or triply-graded, depending on the parity of $n$.

\begin{Remark}
The algebra
$\C[x,y]/(x^n,y^3-xy)$
has a nice topological model. Suppose that \(x\) and \(y\) have degree \(4\) and \(2\) respectively, and let $\mathbb{H}$ denote quaternions. Let \(\eta\) be the tautological quaternionic line bundle over
\(\mathbb{HP}^{n-1}\), viewed as a complex rank two bundle, and consider the
complex projective bundle
\[
\mathbb{P}(\eta\oplus\mathbb{C})\longrightarrow \mathbb{HP}^{n-1}.
\]
By the projective bundle formula,
\[
H^*\big(\mathbb{P}(\eta\oplus\mathbb{C});\C\big)
\cong \C[x,y]/(x^n,y^3-xy),
\]
up to the harmless sign convention. The algebra used
in this paper can therefore be viewed as a quaternionic analog of the familiar Frobenius
algebra \(\C[x]/(x^n)\cong H^*(\mathbb{CP}^{n-1};\C)\) that is crucial to the construction 
of link homologies.
\end{Remark}

The corresponding Euler characteristic recovers specializations of the Penrose
polynomial \(P(\Gamma,n)\): for \(n\) even, two specializations give
\(P(\Gamma,3n)\) and \(P(\Gamma,n)\), while for \(n\) odd one obtains
\(P(\Gamma,3n)\). In particular, the case \(n=4\) produces a homological
refinement of the Penrose polynomial test for four face colorability.

\begin{Remark}
The \(\Z/2\Z\)-grading is genuinely new as far as we can tell, and it appears
in the case relevant to the four color theorem; this gives a new
graded refinement of the classical Penrose polynomial criterion for
four face colorability: instead of asking only for a nonzero number at the end,
one can ask for a stronger positivity statement before collapsing the grading.
In this sense, the extra grading suggests a new homological route toward the
four color theorem.

It is also not a formal accident, which is remarkable in itself. 
As we will see, the grading is tied to the
nonorientable part of the construction, and it is essentially
controlled by the Dyck surface, and Dyck's theorem, which is (in polygon notation of surfaces):
\begin{gather*}
\begin{tikzpicture}[
scale=1.05,
edgearrow/.style={
very thick,
postaction={
decorate,
decoration={
markings,
mark=at position .55 with {\arrow{Stealth[length=5pt]}}
}
}
},
lab/.style={font=\small},anchorbase
]
\coordinate (v1) at (90:1.7);
\coordinate (v2) at (150:1.7);
\coordinate (v3) at (210:1.7);
\coordinate (v4) at (270:1.7);
\coordinate (v5) at (330:1.7);
\coordinate (v6) at (30:1.7);
\fill[blue!4] (v1)--(v2)--(v3)--(v4)--(v5)--(v6)--cycle;
\draw[edgearrow] (v1)--(v2) node[midway, above left, lab] {$a$};
\draw[edgearrow] (v2)--(v3) node[midway, left, lab] {$a$};
\draw[edgearrow] (v3)--(v4) node[midway, below left, lab] {$b$};
\draw[edgearrow] (v4)--(v5) node[midway, below right, lab] {$b$};
\draw[edgearrow] (v5)--(v6) node[midway, right, lab] {$c$};
\draw[edgearrow] (v6)--(v1) node[midway, above right, lab] {$c$};
\node at (0,0) {$\mathbb{RP}^2\#\mathbb{RP}^2\#\mathbb{RP}^2$};
\end{tikzpicture}
\cong
\begin{tikzpicture}[
scale=1.05,
edgearrow/.style={
very thick,
postaction={
decorate,
decoration={
markings,
mark=at position .55 with {\arrow{Stealth[length=5pt]}}
}
}
},
lab/.style={font=\small},anchorbase
]
\coordinate (v1) at (90:1.7);
\coordinate (v2) at (150:1.7);
\coordinate (v3) at (210:1.7);
\coordinate (v4) at (270:1.7);
\coordinate (v5) at (330:1.7);
\coordinate (v6) at (30:1.7);
\fill[blue!4] (v1)--(v2)--(v3)--(v4)--(v5)--(v6)--cycle;
\draw[edgearrow] (v1)--(v2) node[midway, above left, lab] {$a$};
\draw[edgearrow] (v2)--(v3) node[midway, left, lab] {$a$};
\draw[edgearrow] (v3)--(v4) node[midway, below left, lab] {$b$};
\draw[edgearrow] (v4)--(v5) node[midway, below right, lab] {$c$};
\draw[edgearrow] (v6)--(v5) node[midway, right, lab] {$b$};
\draw[edgearrow] (v1)--(v6) node[midway, above right, lab] {$c$};
\node at (0,0) {$\mathbb{RP}^2\#\mathbb{T}^2$};
\end{tikzpicture}
.
\end{gather*}
As we will see, another way of saying this is that we use the monoid structure of 
our MTQFT.
\end{Remark}

The homology is stronger than its Euler characteristic, as expected, and we show
this already on small examples: two nonisomorphic perfect matching graphs can
have the same bracket but different homology; even more surprising, the bracket may vanish while the homology does not, cf. \autoref{S:Exmp}. In fact, for the family \(V_n\)
above, we show that the homology is strictly stronger than the bracket for every positive
integer \(n\). The final section includes the accompanying computer
calculations, which build the cube, insert the local maps, check that \(d^2=0\), and
compute the resulting Poincar\'e polynomial.

\begin{Remark}
Remarkably, there are many categorifications of the Penrose-type polynomials, which are all slightly different in nature.
Let us now describe how the present construction relates to other
categorifications of Penrose-type invariants. There are two nearby strands of
work.

First, Luse--Rong and Kauffman give Penrose-type categorifications in the
general graph-homology tradition \cite{LuRo-penrose-cat,
Kauffman-chrom-dichrom-penrose}. The relation to our construction is as
follows:
\begin{enumerate}

\item Luse--Rong categorify integer evaluations of the Penrose polynomial of a
graph. Their work is close to ours in name and motivation, but their
construction is not a TQFT-valued cube built from a M\"obius local operation.

\item Kauffman's framework is broader: it treats chromatic, dichromatic and
Penrose-type data using enhanced-state constructions. In particular, triply-graded Penrose-related homologies already appear there. Our point is not merely triply gradedness, but the specific TQFT mechanism producing it.

\end{enumerate}
Second, there is Baldridge--McCarty's TQFT approach to graph
coloring \cite{BaMc-graphcoloring}, which is also TQFT-based, uses
unoriented/nonorientable topology, and is designed to encode face colorings of
ribbon graphs. Nevertheless, the present construction is different in several
concrete ways:
\begin{enumerate}[resume]

\item Baldridge--McCarty essentially use the familiar one-variable Frobenius algebra
\(\C[x]/(x^n)\), the cohomology ring of \(\mathbb{CP}^{n-1}\), and its
filtered variant, for their theory. Our explicit family uses instead the two-variable 
algebra \(V_n\) of \autoref{Eq:Vn}, whose cohomological model is a 
projective bundle over quaternionic projective space, which is somewhat richer.

\item Baldridge--McCarty also have a one-circle-to-one-circle local map, but it
is not the M\"obius generator used here. In our construction the third local
map is the M\"obius endomorphism \(m\), and the crucial square relation is \autoref{Eq:Rel}. 

\item In the explicit family constructed below, the MTQFT mechanism
produces doubly- and triply-graded homologies; in particular, the four color
specialization is triply graded. They also use a shifted comultiplication in some cases to ensure that the differential preserves the grading, but we do not have this.

\end{enumerate}
The novelty here is not merely that Penrose-type invariants admit homological
lifts, nor merely that unoriented or nonorientable topology can be used. The
new point is the specific quaternionic-enriched MTQFT lift of the Penrose-type bracket. The
examples in \autoref{S:Exmp} then show that the resulting homology retains
information which is invisible to the bracket itself.
\end{Remark}

Finally, let us stress that our homology is very amenable to calculations, either by hand 
(using the partition-style and Deligne-type categories) or computer 
(code can be found in \cite{CoTu-code}).

\subsection{Structure of the paper}

The paper is organized as follows. In \autoref{S:TQFT}, we recall the relevant
nonorientable two dimensional TQFTs. In \autoref{S:Graphs}, we define the
Penrose-type bracket and construct the chain complex. In \autoref{S:Frob}, we
translate the construction into Frobenius-algebra-language. In
\autoref{S:ncolorchoice}, we introduce the family \(V_n\) and compare the Euler
characteristic with the Penrose polynomial. Finally, in \autoref{S:Exmp} we
compute examples and discuss the code.
\medskip

\noindent\textbf{Acknowledgments.}
DT thanks Scott Baldridge for inspiring the idea behind this paper 
during many enjoyable discussions in Korea, and thanks 
the Korea Institute for Advanced Study for hosting the visit. 
This paper is part of the first author's PhD thesis. DC was supported by the
Commonwealth through an Australian Government Research Training Program
Scholarship [https://doi.org/10.82133/C42F-K220]. DT is supported by ARC Future
Fellowship FT230100489. If the reader finds parts of the construction 
disorienting, then at least the exposition is topologically faithful.


\section{Möbius TQFTs}\label{S:TQFT}


We begin by introducing the notion of a (1+1 or 2D) \emph{Möbius topological quantum field theory} (\emph{MTQFT} for short). We assume the reader has some familiarity with 
TQFT constructions, see e.g. \cite{Ko-tqfts} for more details.

\subsection{The geometric version}

First recall the monoidal category $\mcob{}$, with the nonnegative integers as objects and nonorientable two-dimensional cobordisms as morphisms (see \cite{CoTu-mobius}, which was motivated by many works such as \cite{BaKaMc-unoriented-vkh,BaMc-graphcoloring,Cz-mobius-tqft,Tu-vkh,TuTu-utqft}). $\mcob{}$ has disjoint union as the monoidal product, and is symmetric, pivotal, and has a generator-relation presentation with generators:
\begin{gather}\label{Eq:Gen}
1_1 =\text{id}_1 \colon \begin{tikzpicture}[baseline ={([yshift=-.5ex]current bounding box.center)},scale=0.25]
\pic[
tqft,
incoming boundary components=1,
outgoing boundary components=1,
every lower boundary component/.style={draw},
genus=0,
offset=0,
scale=0.75,
draw,
thick,
boundary separation=40pt,
name=pop
]; 
\end{tikzpicture}, \quad
\mu\colon \begin{tikzpicture}[baseline ={([yshift=-.5ex]current bounding box.center)},scale=0.25]
\pic[
tqft,
incoming boundary components=1,
outgoing boundary components=2,
every lower boundary component/.style={draw},
genus=0,
offset=-0.5,
scale=0.75,
draw,
thick,
boundary separation=40pt,
name=pop
]; 
\end{tikzpicture}, \quad
\eta\colon \begin{tikzpicture}[baseline ={([yshift=-.5ex]current bounding box.center)},scale=0.25]
\pic[
tqft,
incoming boundary components=1,
outgoing boundary components=0,
every lower boundary component/.style={draw},
genus=0,
scale=0.75,
draw,
thick,
name=cup
]; 
\end{tikzpicture}, \quad
\Delta\colon   \begin{tikzpicture}[baseline ={([yshift=-.5ex]current bounding box.center)},scale=0.25]
\pic[
tqft,
incoming boundary components=2,
outgoing boundary components=1,
every lower boundary component/.style={draw},
genus=0,
offset=0.5,
scale=0.75,
draw,
thick,
boundary separation=40pt,
name=pop
]; 
\end{tikzpicture}, \quad
\epsilon\colon \begin{tikzpicture}[baseline ={([yshift=-.5ex]current bounding box.center)},scale=0.25]
\pic[
tqft,
incoming boundary components=0,
outgoing boundary components=1,
every lower boundary component/.style={draw},
genus=0,
scale=0.75,
draw,
thick,
name=cup
]; 
\end{tikzpicture}, \quad
s\colon  \begin{tikzpicture}[baseline ={([yshift=-.5ex]current bounding box.center)},scale=0.25,
tqft/.cd,
cobordism/.style={draw},
every upper boundary component/.style={draw},
every lower boundary component/.style={draw},
]
\pic [tqft/cylinder to next,anchor=incoming boundary 1,name=c, boundary separation=75pt,scale=0.75, draw, thick];
\pic [tqft/cylinder to prior,anchor=incoming boundary 1,
at=(c-outgoing boundary |- c-incoming boundary), boundary separation=75pt,scale=0.75 , draw, thick];
\end{tikzpicture},
\end{gather}
together with
\begin{gather}\label{Eq:MGen}
m=\begin{tikzpicture}[baseline ={([yshift=-.5ex]current bounding box.center)},scale=0.25]
\pic[
tqft,
incoming boundary components=1,
outgoing boundary components=1,
every lower boundary component/.style={draw},
genus=0,
scale=0.75,
draw,
thick,
boundary separation=40pt,
name=pop
]; 
\node at ([xshift=0pt, yshift=80pt]pop-outgoing boundary 1) {$\lightning$};
\end{tikzpicture},
\quad
\lightning
=
\raisebox{-0.1cm}{$\begin{tikzpicture}[scale=0.5, line cap=round, line join=round,anchorbase]
\coordinate (L) at (-2,0);
\coordinate (R) at ( 2,0);
\fill[orchid!8]
(L) .. controls (-0.8, 1.4) and (0.8, 1.4) .. (R)
.. controls (0.8,-1.4) and (-0.8,-1.4) .. cycle;
\draw[line width=1.0pt]
(L) .. controls (-0.8, 1.4) and (0.8, 1.4) .. (R);
\draw[line width=1.0pt]
(L) .. controls (-0.8,-1.4) and (0.8,-1.4) .. (R);
\draw[line width=1.0pt, -{Stealth[length=2.6mm]}]
(-0.15,1.05) -- (0.15,1.05);
\draw[line width=1.0pt, -{Stealth[length=2.6mm]}]
(0.15,-1.05) -- (-0.15,-1.05);
\node at (0, 1.55) {$a$};
\node at (0,-1.55) {$a$};
\fill (L) circle (1.3pt);
\fill (R) circle (1.3pt);
\node (B) at (0,-2.15) {};
\end{tikzpicture}$}
=\mathbb{RP}^2,
\end{gather}
which represents the connected sum of a cylinder and the real projective plane $\mathbb{RP}^2$ i.e. a Möbius strip glued to the identity cobordism \cite[Theorem 3.9]{CoTu-mobius}. The generators in \autoref{Eq:Gen} are called cylinder (or identity), pants, cup, pants-up, cap, and swap, and $m$ in \autoref{Eq:MGen} is called the Möbius generator.

We list all the relations between the generators below for completeness:
\begin{align}
s^2 &= 1_2, \label{startrel}\\
(1_1\otimes s) \circ (s \otimes 1_1) \circ (1_1 \otimes s) & = (s \otimes 1_1) \circ (1_1 \otimes s) \circ (s \otimes 1_1), \label{symm}\\
s \circ (1_1 \otimes \eta) &= \eta \otimes 1_1, \label{hol1}\\
(1_1 \otimes \mu) \circ (s \otimes 1_1)\circ (1_1 \otimes s) & = s \circ (\mu \otimes 1_1), \label{hol2} \\
(1_1 \otimes \epsilon) \circ s & = \epsilon \otimes 1_1, \label{hol3}\\
(1_1 \otimes s) \circ (s \otimes 1_1) \circ (1_1 \otimes \Delta) &= (\Delta \otimes 1_1) \circ s, \label{hol4}\\
\mu \circ (1_1 \otimes \eta) &= 1_1 = \mu \circ (\eta \otimes 1_1), \\
(1_1 \otimes \epsilon) \circ \Delta &= 1_1 = (\epsilon \otimes 1_1) \circ \Delta, \\
(\mu \otimes 1_1) \circ (1_1 \otimes \Delta) &= \Delta \circ \mu = (1_1 \otimes \mu) \circ (\Delta \otimes 1_1), \label{frob}\\
\mu \circ s &= \mu,  \label{endrel} 
\end{align}
together with
\begin{align}
\mu \circ (m \otimes 1_1 ) &= m \circ \mu = \mu \circ (1_1 \otimes m), \label{mobmu} \\
s \circ (m \otimes 1_1) & =  (1_1 \otimes m) \circ s, \label{mobs} \\
m^3 &=h \circ m, \label{mobrel}
\end{align}
where we define the handle as
\begin{gather*}
h\colon \begin{tikzpicture}[baseline ={([yshift=-.5ex]current bounding box.center)},scale=0.25]
\pic[
tqft,
incoming boundary components=1,
outgoing boundary components=1,
every lower boundary component/.style={draw},
genus=1,
scale=0.75,
draw,
thick,
boundary separation=40pt,
name=pop
]; 
\end{tikzpicture} \quad = \quad \begin{tikzpicture}[tqft, baseline ={([yshift=-.5ex]current bounding box.center)},scale=0.25, cobordism/.style={draw},
every upper boundary component/.style={draw},
every lower boundary component/.style={draw}]
\pic[tqft/pair of pants,draw, thick, name=a,scale=0.5];
\pic[tqft/reverse pair of pants,draw, thick,scale=0.5, name=b, anchor = incoming boundary 1, at =(a-outgoing boundary 1)];
\end{tikzpicture} \quad = \quad \mu \circ \Delta.
\end{gather*}

\begin{Remark}\label{rmk2cob}
The relations \autoref{startrel}--\autoref{endrel} with identical generators provide a generator-relation presentation for the (skeletal) category of oriented cobordisms $\cob$; see e.g. \cite{Ko-tqfts}. We would recommend visualizing these before continuing. Helpful are \emph{spine diagrams}:
\begin{gather*}
\begin{tikzpicture}[
baseline ={([yshift=-.5ex]current bounding box.center)},anchorbase]
\pic[
tqft,
incoming boundary components=3,
outgoing boundary components=1,
every lower boundary component/.style={draw},
genus=1,
offset = 1,
draw,
thick,
name=ex1
];
\pic[tqft,
incoming boundary components=1,
outgoing boundary components=0,
draw,
thick,
name=cyldoub1,
every lower boundary component/.style={draw},
at={(5.5,0)},
];
\end{tikzpicture} \quad
\leftrightsquigarrow \quad 
\begin{tikzpicture}[anchorbase]
\draw[usual] (0,0) to (1,1);
\draw[usual] (0,0) to (-0.5,0.5);
\draw[usual] (-0.5,0.5) to (-1,1);
\draw[usual] (-0.5,0.5) to (0,1);
\draw[usual] (0,0) to (0,-0.3);
\draw[usual] (0,-1) to (0,-0.8);
\draw[usual] (0,-0.3) to[out=0,in=0] (0,-0.8);
\draw[usual] (0,-0.3) to[out=180,in=180] (0,-0.8);
\draw[usual,dot] (1.5,1) to (1.5,0.6);
\end{tikzpicture}
.
\end{gather*}
(We do not know the original reference for spine diagrams, but this visual simplification is well-known.)
\end{Remark}

When working with spine diagrams, let us use a hollow dot for handles $h$ and a filled dot for a Möbius generator $m$.
The relations \autoref{mobmu} and \autoref{mobs} in terms of spine diagrams are
\begin{gather*}
\begin{tikzpicture}[anchorbase]
\draw[usual,mob=0.25] (0,0) to (0.5,0.5) to (1,0);
\draw[usual] (0.5,0.5) to (0.5,1);
\end{tikzpicture}
=
\begin{tikzpicture}[anchorbase]
\draw[usual] (0,0) to (0.5,0.5) to (1,0);
\draw[usual,mob=0.5] (0.5,0.5) to (0.5,1);
\end{tikzpicture}
=
\begin{tikzpicture}[anchorbase]
\draw[usual,mob=0.75] (0,0) to (0.5,0.5) to (1,0);
\draw[usual] (0.5,0.5) to (0.5,1);
\end{tikzpicture}
\hspace{0.5cm} \text{and} \hspace{0.5cm}
\begin{tikzpicture}[anchorbase]
\draw[usual,mob=0.25] (0,0) to (1,1);
\draw[usual] (1,0) to (0,1);
\end{tikzpicture}
=
\begin{tikzpicture}[anchorbase]
\draw[usual,mob=0.75] (0,0) to (1,1);
\draw[usual] (1,0) to (0,1);
\end{tikzpicture},
\end{gather*}
Moreover, arguably the most exciting relation is \autoref{mobrel}, which, in spine pictures, is
\begin{gather*}
\begin{tikzpicture}[anchorbase]
\draw[usual, mob=0.25, mob=0.5, mob=0.75] (0,0) to (0,1);
\end{tikzpicture}
=
\begin{tikzpicture}[anchorbase]
\draw[usual, hol=0.66, mob=0.33] (0,0) to (0,1);
\end{tikzpicture}\quad
\left(\text{closed version: }
\begin{tikzpicture}[baseline ={([yshift=-.5ex]current bounding box.center)}, draw, thick,scale=0.25]
\draw (0,0) circle (2cm);
\draw (-2,0) arc (180:360:2 and 0.6);
\draw[dashed, thick] (2,0) arc (0:180:2 and 0.6);
\node at (0,-1){$\lightning$};
\node at (-0.66,0.8){$\lightning$};
\node at (0.66,0.8){$\lightning$};
\end{tikzpicture}
=
\begin{tikzpicture}[scale=0.25, draw, thick, baseline ={([yshift=-.5ex]current bounding box.center)}]
\draw (0,0) ellipse (3cm and 2cm);
\draw (-1.5, 0.2) arc (180:360:1.5 and 0.6);
\draw (1,-0.2) arc (0:180:1 and 0.6);
\node at (-2,0){$\lightning$};
\end{tikzpicture}
\right)
.
\end{gather*}
This is a well-known, famous, and surprising relation among surfaces.

\begin{Remark}
As explained in \cite{CoTu-mobius}, we note that the boundaries of all cobordisms in $\mcob{}$ are oriented with fixed orientation, which differs from the notion of unoriented cobordisms presented in e.g. \cite{Cz-mobius-tqft,Tu-vkh,TuTu-utqft}; in particular, we do not consider orientation-reversing diffeomorphisms of the boundaries.
\end{Remark}

The category $\mcob{}$ is symmetric and pivotal, using the swap and the pivotal structure induced by cup/cap (use $\Delta\circ\eta$ and $\epsilon\circ\mu$) as the relevant structures. We will always use these as structures, or obvious analogs, whenever we work with $\mcob{}$ or related categories.

\begin{Example}\label{E:cob}
Consider the following example:
\begin{gather*}
\begin{tikzpicture}[
baseline ={([yshift=-.5ex]current bounding box.center)}]
\pic[
tqft,
incoming boundary components=1,
outgoing boundary components=0,
every lower boundary component/.style={draw},
genus=0,
offset = 0.5,
draw,
thick,
name=ex1
];
\pic[
tqft,
incoming boundary components=1,
outgoing boundary components=2,
every lower boundary component/.style={draw},
genus=1,
offset = -0.5,
draw,
thick,
name=pop,
at={(2,0)},
]; 
\node at ([xshift=29pt, yshift=43pt]pop-outgoing boundary 1) {$\lightning$};
\node at ([yshift=8pt]ex1-incoming boundary 1) {$\Big\uparrow$};
\node at ([yshift=8pt]pop-incoming boundary 1) {$\Big\uparrow$};
\node at ([yshift=-8pt]pop-outgoing boundary 1) {$\Big\uparrow$};
\node at ([yshift=-8pt]pop-outgoing boundary 2) {$\Big\uparrow$};
\end{tikzpicture} \quad \in \quad \Hom_{\mcob{}}(2, 2),
\end{gather*}
where the orientation conventions for the boundary $\mathbb{S}^1$ are indicated by the arrows. As usual, we consider individual cobordisms as representatives for their corresponding diffeomorphism class. 
\end{Example}

\subsection{The linear algebra version}

Let $R$ be a commutative ring. Let $R\textbf{Mod}$ be the symmetric monoidal category of 
finite-dimensional free $R$-modules (finite-dimensional vector spaces if $R$ is a field) with the usual tensor product as the structure.

\begin{Definition}\label{D:Mobiusfrobtqft} 
A \emph{Möbius topological quantum field theory}, or \emph{MTQFT}, is a symmetric monoidal functor $\mathcal{F}\colon\mcob{} \rightarrow R\textbf{Mod}$.
\end{Definition}

\begin{Remark}
We could be more general in \autoref{D:Mobiusfrobtqft} by using finitely-generated projective $R$-modules instead of finite-dimensional free $R$-modules.
\end{Remark}

Note that one can use the pivotality of $\mcob{}$ to put a 
pairing on the $R$-modules in $\mathcal{F}(\mcob{})$.

\begin{Example}\label{E:mobn1}
Fix $R = \mathbb{Z}[\frac{1}{3}]$. Consider the MTQFT defined by $\mathcal{F}(1) = R[y]/(y^3) $ on objects. On morphisms, $\mathcal{F}(\mu)$ is the usual polynomial ring multiplication with unit $\mathcal{F}(\mathcal{\eta})(1) = 1$, and given a basis $\{1, y, y^2 \}$ for $R[y]/(y^3) $, $\mathcal{F}(\epsilon)(y^2)=3$ and zero on the other basis elements. Furthermore, we have
\begin{align*}
\mathcal{F}(\Delta)(1) &=\tfrac{1}{3}(1 \otimes y^2+y\otimes y + y^2 \otimes 1),        &  \mathcal{F}(\Delta)(y) &= \tfrac{1}{3}(y \otimes y^2+y^2\otimes y),             &  \mathcal{F}(\Delta)(y^2)&=\tfrac{1}{3}y^2 \otimes y^2,\\
\mathcal{F}(m)(1)&=y,        &  \mathcal{F}(m)(y)&=y^2,  &  \mathcal{F}(m)(y^2)&=0, 
\end{align*}
and $\mathcal{F}(s)$ is the usual swap map for tensor products of modules.
\end{Example}

Note the appearance of $\frac{1}{3}$, which looks a bit surprising at first glance. However, it is really an artifact of the relation \autoref{mobrel}.

\begin{Remark}\label{rm:nophi}
Let us compare briefly with the standard Turaev--Turner framework for unoriented
two-dimensional TQFTs \cite{TuTu-utqft}. In that setting, the algebraic input is
an extended Frobenius algebra: besides the Frobenius structure, there is an
orientation-reversing involution, often denoted \(\phi\), together with a
M\"obius element. The resulting structure has been used and developed in several
directions; see \eg \,
\cite{CzKeQuWa-extended,GagnonRirieYoung-real-tqft}.

Our convention is slightly different. The boundary circles in our cobordism
category have fixed orientations, and we do not use the twist \(\phi\) itself
as a gluing operation. What remains visible in our diagrammatics is the
M\"obius generator. While we lose the boundary-orientation
bookkeeping present in the full Turaev--Turner setting, which essentially amounts to a 
\(\Z/2\Z\) factor, the advantage, in our opinion, is that the resulting diagrammatics are simpler:
they form a plain diagram-algebra-style calculus, with the M\"obius generator
appearing as an ordinary local diagrammatic operation.
\end{Remark}


\section{Graph invariants}\label{S:Graphs}

We now define graph homologies. The following section is strongly inspired by 
\cite{BaMc-graphcoloring}, but also by \cite{HeGuRo-chromatic} and related works.
We expect that the reader has basic background in graph theory, standard constructions 
in quantum topology, and link homologies. See e.g. \cite{LaZv-graphs-surfaces}, \cite{Tu-qt,TuVi-monoidal-tqft}, and \cite{Kho-jones,BN-cobordisms} for some references. 


\subsection{Preliminaries}\label{SS:Graphs}

We begin by recalling some relevant terminology.


\begin{Notation}
By \emph{graph} we mean an undirected multigraph i.e. $G = (V, E, r)$ where $V$ is a nonempty set of vertices, $E$ a set of edges, and $r: E \rightarrow \big\{\{v_1, v_2\}: v_1, v_2 \in V\big\}$ (using multiset notation, we write $e=\{1,2\}$ for $r(e)=\{1,2\}$ etc.). Also, we require that $|V|, |E| < \infty$ i.e. finite graphs only.
\end{Notation}

\begin{Definition}\label{D:ribbon}
A \emph{ribbon graph of a graph} $G$ is an embedding $i: G \rightarrow \Gamma$, where $\Gamma$ is a surface with boundary that deformation retracts onto $i(G)$. We refer to $G$ as the \emph{underlying graph} of $\Gamma$ and to $\Gamma$ as the \emph{associated surface} of the ribbon graph.
\end{Definition}

We stress that such an embedding is extra structure: every graph can be made into a ribbon graph, potentially in multiple ways.

\begin{Example}
Here is an example:
\begin{gather*}
G=(\{1,2\},\big\{\{1,1\},\{1,2\},\{1,2\}\big\})
,\quad
\Gamma=
\begin{tikzpicture}[anchorbase]
\node[vertex] (v1) at (0,0) {};
\node[vertex] (v2) at (3,0) {};
\draw[ribbon] (v1) -- (v2);
\draw[ribbon] (v1) to[out=60, in=120] (v2);
\draw[ribbon] (v1) to[out=135, in=225, looseness=5] (v1);
\end{tikzpicture}
,
\end{gather*}
where we have thickened up the (images of the) vertices for readability, as we will do throughout.
\end{Example}

An orientation of a ribbon graph $i: G \rightarrow \Gamma$ is given by an orientation of its associated surface $\Gamma$, if it exists. Furthermore, let $\bar{\Gamma}$ denote the closed surface obtained by attaching discs to the boundary of $\Gamma$. Two ribbon graphs $i_1: G_1 \rightarrow \Gamma_1$ and $i_2: G_2 \rightarrow \Gamma_2$ are \emph{equivalent}  ribbon graphs if there is a homeomorphism $f: \bar{\Gamma}_1 \rightarrow \bar{\Gamma}_2$ that induces an isomorphism of graphs from $G_1$ to $G_2$.

\begin{Notation}
It is customary to denote a ribbon graph $i: G \rightarrow \Gamma$ simply by the associated surface $\Gamma$.
\end{Notation}

In fact, one can represent oriented ribbon graphs as graphs in the plane, with vertices drawn as black dots and edges as curves, and at any one point, at most two of such curves can intersect. We call such a diagram a \emph{ribbon diagram}. Every oriented ribbon graph has many associated ribbon diagrams.
Moreover, we recall the following Reidemeister-type theorem for ribbon graphs:

\begin{Lemma}
Two ribbon diagrams represent equivalent oriented ribbon graphs if and only if they are related by a finite sequence of \emph{planar isotopy}, \emph{Reidemeister(-type) moves} and \emph{fork moves}. The planar isotopies are of the form (this is not an exhaustive list):
\begin{gather}\label{eq:plane}
\begin{tikzpicture}[anchorbase,scale=1]
\draw[usual] (0,0.5) to (0,0) to[out=270,in=180] (0.25,-0.25) to[out=0,in=270] (0.5,0);
\draw[usual] (0.5,0) to[out=90,in=180] (0.75,0.25) to[out=0,in=90] (1,0) to (1,-0.5);
\end{tikzpicture}
 \longleftrightarrow 
\begin{tikzpicture}[anchorbase,scale=1]
\draw[usual] (0,-0.5) to (0,0.5);
\end{tikzpicture}
\longleftrightarrow 
\begin{tikzpicture}[anchorbase,scale=1]
\draw[usual] (0,0.5) to (0,0) to[out=270,in=0] (-0.25,-0.25) to[out=180,in=270] (-0.5,0);
\draw[usual] (-0.5,0) to[out=90,in=0] (-0.75,0.25) to[out=180,in=90] (-1,0) to (-1,-0.5);
\end{tikzpicture}
,\quad \quad
\begin{tikzpicture}[anchorbase,scale=0.2]
\draw [usual] (0,-1) to (0,.75);
\draw [usual] (0,.75) to [out=30,in=270] (1,2.5);
\draw [usual] (0,.75) to [out=150,in=270] (-1,2.5); 
\draw [usual] (1,2.5) to [out=90,in=180] (2,3.5) to [out=0,in=90] (3,2.5);
\draw [usual] (-1,2.5) to [out=90,in=180] (2,4.5) to [out=0,in=90] (5,2.5);
\draw [usual] (3,-1) to (3,2.5);
\draw [usual] (5,-1) to (5,2.5);
\end{tikzpicture}
\longleftrightarrow 
\begin{tikzpicture}[anchorbase,scale=0.2]
\draw [usual] (4, .75) to (4,2.5);
\draw [usual] (5,-1) to [out=90,in=330] (4,.75);
\draw [usual] (3,-1) to [out=90,in=210] (4,.75);
\draw [usual] (0, -1) to (0,2.5);
\draw [usual] (0,2.5) to [out=90,in=180] (2,4.5) to [out=0,in=90] (4,2.5);
\end{tikzpicture}
,\quad \quad 
\begin{tikzpicture}[anchorbase,scale=0.2]
\draw [usual] (0,-1) to (0,.75);
\draw [usual] (0,.75) to [out=30,in=270] (1,2.5);
\draw [usual] (0,.75) to [out=150,in=270] (-1,2.5); 
\draw [usual] (1,2.5) to [out=90,in=180] (2,3.5) to [out=0,in=90] (3,2.5);
\draw [usual] (-1,2.5) to [out=90,in=180] (2,4.5) to [out=0,in=90] (5,2.5);
\draw [usual] (3,-1) to (3,2.5);
\draw [usual] (5,-1) to (5,2.5);
\end{tikzpicture}
\longleftrightarrow 
\begin{tikzpicture}[anchorbase,scale=0.2]
\draw [usual] (4, .75) to (4,2.5);
\draw [usual] (5,-1) to [out=90,in=330] (4,.75);
\draw [usual] (3,-1) to [out=90,in=210] (4,.75);
\draw [usual] (0, -1) to (0,2.5);
\draw [usual] (0,2.5) to [out=90,in=180] (2,4.5) to [out=0,in=90] (4,2.5); 
\end{tikzpicture}
.
\end{gather}
The Reidemeister-type moves mirror the three Reidemeister moves for edges away from the vertices, i.e.
\begin{gather}\label{eq:reide}
\begin{tikzpicture}[anchorbase,scale=1]
\draw[usual] (0.5,0.75) to[out=270,in=0] (0.25,0.5) to[out=180,in=270] (0,1.25) to (0,1.5);
\draw[usual] (0,0) to (0,0.25) to[out=90,in=180] (0.25,1) to[out=0,in=90] (0.5,0.75);
\end{tikzpicture}
\quad \longleftrightarrow \quad
\begin{tikzpicture}[anchorbase,scale=1]
\draw[usual] (0,0) to[out=90,in=270] (0,1.5);
\end{tikzpicture}
,\quad \quad
\begin{tikzpicture}[anchorbase,scale=1]
\draw[usual] (0.5,0) to[out=90,in=270] (0,0.75) to[out=90,in=270] (0.5,1.5);
\draw[usual] (0,0) to[out=90,in=270] (0.5,0.75) to[out=90,in=270] (0,1.5);
\end{tikzpicture}
\quad \longleftrightarrow \quad
\begin{tikzpicture}[anchorbase,scale=1]
\draw[usual] (0,0) to[out=90,in=270] (0,1.5);
\draw[usual] (0.5,0) to[out=90,in=270] (0.5,1.5);
\end{tikzpicture}
,\quad \quad
\begin{tikzpicture}[anchorbase,scale=1]
\draw[usual] (1,0) to[out=90,in=270] (0,1.5);
\draw[usual] (0.5,0) to[out=90,in=270] 
(0,0.75) to[out=90,in=270] (0.5,1.5);
\draw[usual] (0,0) to[out=90,in=270] (1,1.5);
\end{tikzpicture}
\quad \longleftrightarrow \quad
\begin{tikzpicture}[anchorbase,scale=1]
\draw[usual] (1,0) to[out=90,in=270] (0,1.5);
\draw[usual] (0.5,0) to[out=90,in=270] 
(1,0.75) to[out=90,in=270] (0.5,1.5);
\draw[usual] (0,0) to[out=90,in=270] (1,1.5);
\end{tikzpicture}
,
\end{gather}
while the fork move at vertices is given by 
\begin{gather}\label{ribbonmove4}
\begin{tikzpicture}[anchorbase,yscale=-1,scale=1.25]
\draw[usual] (0,0) to (0.5,0.25);
\draw[usual] (0.5,0.25) to (1,0);
\draw[usual] (0.5,0.25) to (0.25,0.8);
\draw[usual] (0.5,0.25) to (0.75,0.8);
\draw[usual] (0.95,0.2) arc (0:180:0.45);
\node at (0.5,0.25)[circle,fill,inner sep=1.55pt]{};
\node at (0.5,-0.05){$\cdots$};
\node at (0.5,0.75){$\cdots$};
\end{tikzpicture} \quad \longleftrightarrow \quad \begin{tikzpicture}[anchorbase,yscale=-1,scale=1.25]
\draw[usual] (0,-0.2) to (0.5,0.25);
\draw[usual] (0.5,0.25) to (1,-0.2);
\draw[usual] (0.5,0.25) to (0.25,0.6);
\draw[usual] (0.5,0.25) to (0.75,0.6);
\draw[usual] (0.05,0.3) arc (180:360:0.45);
\node at (0.5,0.25)[circle,fill,inner sep=1.55pt]{};
\node at (0.5,-0.3){$\cdots$};
\node at (0.5,0.6){$\cdots$};
\end{tikzpicture}.
\end{gather}
We call all of these taken together as the \emph{ribbon moves}.
\end{Lemma}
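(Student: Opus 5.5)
The plan is to reduce the statement to the combinatorial description of oriented ribbon graphs as rotation systems. An oriented ribbon graph is, up to equivalence, the same as its underlying graph $G$ together with a cyclic ordering of the half-edges at each vertex. This is the standard Heffter--Edmonds--Ringel correspondence (see e.g. \cite{LaZv-graphs-surfaces}), which I would take as known. A ribbon diagram determines this data by reading the half-edges counterclockwise around each black dot; crossings of edge-curves away from vertices carry no information. So the proof splits into two directions:
\begin{enumerate}
\item each ribbon move preserves the rotation system up to graph isomorphism;
\item two diagrams with equivalent rotation systems are connected by ribbon moves.
\end{enumerate}

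For soundness, I would check each move locally. Planar isotopies \autoref{eq:plane} and the Reidemeister-type moves \autoref{eq:reide} do not touch vertices at all, so they leave the underlying graph and the cyclic orders unchanged. The fork move \autoref{ribbonmove4} takes the outermost half-edge on one side of a vertex and swings it around the back to the other side. Around the vertex, in the plane, this changes the linear order $(e_1,\dots,e_k)$ read from one side to $(e_2,\dots,e_k,e_1)$. That is a cyclic rotation, so the cyclic order is unchanged. Hence every move yields an equivalent oriented ribbon graph.

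For completeness, start from two diagrams $D_1,D_2$ whose rotation systems are identified by a graph isomorphism that respects the cyclic orders. The argument has three steps.
\begin{enumerate}
\item By planar isotopy, move the vertices of $D_2$ to the positions of the corresponding vertices of $D_1$.
\item Using fork moves and rotations of small disc neighbourhoods of the vertices (which are planar isotopies), arrange that at each vertex the half-edges leave in the same planar linear order as in $D_1$. The cyclic orders already agree, so only rotations are needed, and fork moves realize exactly these.
\item Each edge is now a pair of arcs with the same endpoints and the same germs at those endpoints. We must connect the two immersed configurations of edges.
\end{enumerate}

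Step 3 is the main obstacle. The approach is to treat the diagram, with small discs removed around the vertices, as a tangle diagram in which every crossing is virtual-like, i.e. crossings carry no over/under data. Two collections of arcs with fixed endpoints in the punctured plane are then homotopic rel endpoints. Here is why. The punctures are the vertex discs, but an edge may be pushed across a vertex disc only at the cost of fork-type moves; alternatively, if the loop around a vertex is contractible, that situation is absorbed by the fork move applied to the other half-edges. I would first route each edge of $D_2$ to agree with $D_1$ by a generic homotopy. A generic homotopy of immersed curves passes only through the codimension-one events of cusps, tangencies and triple points, which are exactly the three moves of \autoref{eq:reide} without over/under information. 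The remaining events are an edge passing over a vertex. I would show that such a passage equals a composite of fork moves (rotating the other edges around the vertex) and isotopy. This last reduction, handling an arc sweeping across a vertex, is the step I expect to require the most care; the plan is to verify it by an explicit local picture.
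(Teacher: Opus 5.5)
\textbf{The gap: the fork move is misread, and this breaks the step you flag as the main obstacle.}

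The arc in \autoref{ribbonmove4} is not a half-edge of the vertex. Its ends are free, and the move slides this separate strand from crossing the half-edges on one side of the vertex to crossing those on the other side. In other words, the fork move \emph{is} the event ``a strand passes across a vertex''. The paper uses it exactly this way in the proof of \autoref{L:blowupiso}, to move an unattached edge across the vertices of a blowup.

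What you describe instead, replacing the linear reading $(e_1,\dots,e_k)$ by $(e_2,\dots,e_k,e_1)$, is just turning a vertex neighbourhood. That is already a planar isotopy (cf.\ the vertex-turning pictures in \autoref{eq:plane}). So your step 2 needs no fork moves, and your soundness check verifies the wrong move. Soundness of the actual fork move is immediate anyway: endpoints and cyclic orders are untouched, and crossings carry no data.

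More seriously, under your reading the reduction promised at the end cannot be carried out. Take $f\neq e$ with exactly one end at $v$, and consider the parity of the number of crossings between $e$ and $f$.
\begin{enumerate}
\item It is unchanged by planar isotopies, vertex turns included.
\item It is unchanged by the moves in \autoref{eq:reide}: the second changes the count by $0$ or $2$, the other two leave it fixed.
\item It flips when $e$ sweeps across $v$, because the half-edge of $f$ at $v$ lies on exactly one side.
\end{enumerate}
So no composite of ``rotating the other edges around the vertex'', isotopies and Reidemeister-type moves can realize the passage. It must be an independent generator, and that generator is precisely the fork move.

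\textbf{With the move read correctly, your plan works and differs from the paper.} The paper gives no argument of its own; it cites \cite[Theorem 2.7]{BaKaRu-ribbonvirtual} for trivalent graphs and asserts that the general case is identical. Your route is self-contained and handles all vertex degrees uniformly through the rotation-system correspondence.

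Step 3 should then read as follows. After steps 1--2, the two edge systems have the same endpoints and the same germs at the vertices. Hence they are homotopic rel these germs in the plane. Note: not in the punctured plane, whose fundamental group is free, so your sentence asserting homotopy there is false as stated. A generic such homotopy passes only through cusps, self-tangencies and triple points, which are the flat moves \autoref{eq:reide}. It also passes through finitely many instants at which a strand crosses a vertex, and each of these is a single fork move.

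One convention should be made explicit. The correspondence with rotation systems requires the homeomorphisms in the definition of equivalence to be orientation-preserving. If orientation reversal were allowed, a diagram and its mirror image would be equivalent. The ribbon moves preserve every cyclic order, so in general they cannot relate the two.
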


\begin{proof}
This is a nontrivial theorem, see e.g. \cite[Theorem 2.7]{BaKaRu-ribbonvirtual}. The only difference for us is \autoref{ribbonmove4}; \cite{BaKaRu-ribbonvirtual} only considers ribbon graphs whose underlying graphs are trivalent, but the argument is identical in the more general setting (recall that a graph is \emph{trivalent} or \emph{cubic} if every vertex has degree three).
\end{proof}

A ribbon diagram can be used to obtain a ribbon graph by using discs for vertices and bands for edges, as illustrated in \autoref{E:basic} below.

\begin{Example}\label{E:basic}
Given the ribbon diagram
\begin{gather*}
\begin{tikzpicture}[anchorbase,yscale=-1,scale=1.25]
\draw[usual] (0,1) to [ out =180 , in = 180](0,0);
\draw[usual] (0,0) to [ out =0 , in = 0](0,1);
\draw[usual] (0,0) to (0,1);
\node at (0,1)[circle,fill,inner sep=1.55pt]{};
\node at (0,0)[circle,fill,inner sep=1.55pt]{};
\end{tikzpicture}
,
\end{gather*}
we can associate a ribbon graph with underlying graph $G = (V, E,r)$, with vertices $V= \{0,1\}$ and edges $E = \{e, f, g\}$ with $e=f=g = \{0,1\}$, and associated surface
\begin{gather*}
\eg \, \hspace{0.5cm}
\begin{tikzpicture}[anchorbase]
\coordinate (a) at (0,1.7);
\coordinate (b) at (0,0);
\draw[ribbon] (a) to[out=180,in=180] (b);
\draw[ribbon] (a) to[out=270,in=90] (b);
\draw[ribbon] (a) to[out=0,in=0] (b);
\node[vertex] at (a) {};
\node[vertex] at (b) {};
\end{tikzpicture} 
,\quad \text{or} \quad
\begin{tikzpicture}[anchorbase]
\coordinate (a) at (0,1.7);
\coordinate (b) at (0,0);
\draw[ribbon] (a) to[out=180,in=180] (b);
\draw[ribbon] (a) to[out=0,in=0] (b);
\draw[ribbon boundary]
(-0.08,1.55)
.. controls (-0.12,1.15) and (-0.10,0.90) .. (0.05,0.78)
.. controls (0.20,0.65) and (0.15,0.35) .. (0.08,0.15);
\draw[ribbon boundary,preaction={draw=white,line width=4pt}]
(0.08,1.55)
.. controls (0.12,1.15) and (0.10,0.90) .. (-0.05,0.78)
.. controls (-0.20,0.65) and (-0.15,0.35) .. (-0.08,0.15);
\node[vertex] at (a) {};
\node[vertex] at (b) {};
\end{tikzpicture}
, \quad \text{or} \quad \cdots
\end{gather*}
where half twists may be added to the bands resulting in a potentially nonorientable surface \ie \, there may be many choices of associated surface for a given underlying graph.
\end{Example}

\begin{Notation}
We will not use unoriented ribbon graphs in this paper, and we will say ribbon graph instead of oriented ribbon graph for short.
\end{Notation}

\begin{Remark}
In the case of unoriented ribbon graphs, we obtain an almost identical correspondence except that ribbon diagrams are replaced with signed ribbon diagrams. For our purposes though, we only consider oriented ribbon graphs, which we will think of as equivalence classes of ribbon diagrams related by ribbon moves and planar isotopy. 
\end{Remark}

We now come to our main object of study. First, we recall the following:

\begin{Definition}\label{D:perfectgraph}
A \emph{perfect matching} of a graph $G = (V, E, r)$ is a subset of edges $M \subset E$ such that each vertex is incident to exactly one edge in $M$.
\end{Definition}

\begin{Example}
Given the graph $G = (V, E, r)$ from \autoref{E:basic}, the set $M=\{f\}$ gives a perfect matching of $G$; $M = \{e\},$ or $M =\{g\}$ would also work.
\end{Example}

\begin{Definition}\label{D:perfectribbon}
A \emph{perfect matching graph}, denoted $\Gamma_M$, is a ribbon graph $i: G \rightarrow \Gamma$ together with a perfect matching $M$ of the underlying graph $G$. The perfect matching is represented using blue dotted edges on the corresponding ribbon diagrams.
\end{Definition}

\begin{Example}\label{E:permatexm}
Examples of perfect matching graphs are given below:
\begin{gather*}
\begin{tikzpicture}[anchorbase,yscale=-1,scale=1.25]
\draw[usual] (0,1) to [ out =180 , in = 180](0,0);
\draw[usual] (0,0) to [ out =0 , in = 0](0,1);
\draw[match] (0,0) to (0,1);
\node at (0,1)[circle,fill,inner sep=1.55pt]{};
\node at (0,0)[circle,fill,inner sep=1.55pt]{};
\node[below=5pt] at (current bounding box.south) {$E_1$};
\end{tikzpicture} \hspace{1.5cm} \begin{tikzpicture}[anchorbase,yscale=-1,scale=1.25]
\draw[usual] (0,1) to [ out =180 , in = 180](0,0);
\draw[usual] (1,1) to [ out =0 , in = 0](1,0);
\draw[match] (0,0) to (1,0);
\draw[match] (0,1) to (1,1);
\draw[usual] (1,1) to (1,0);
\draw[usual] (0,0) to (0,1);
\node at (0,1)[circle,fill,inner sep=1.55pt]{};
\node at (0,0)[circle,fill,inner sep=1.55pt]{};
\node at (1,0)[circle,fill,inner sep=1.55pt]{};
\node at (1,1)[circle,fill,inner sep=1.55pt]{};
\node[below=5pt] at (current bounding box.south) {$E_2$};
\end{tikzpicture} \hspace{1.5cm} \begin{tikzpicture}[anchorbase,yscale=-1,scale=1.25]
\draw[usual] (0,0) to (1,0);
\draw[usual] (0,1) to (1,1);
\draw[match] (1,1) to (1,0);
\draw[match] (0,0) to (0,1);
\draw[usual] (0,0) to (1,1);
\draw[usual] (0,1) to (1,0);
\node at (0,1)[circle,fill,inner sep=1.55pt]{};
\node at (0,0)[circle,fill,inner sep=1.55pt]{};
\node at (1,0)[circle,fill,inner sep=1.55pt]{};
\node at (1,1)[circle,fill,inner sep=1.55pt]{};
\node[below=5pt] at (current bounding box.south) {$E_3$};
\end{tikzpicture} \hspace{1.5cm}  \begin{tikzpicture}[anchorbase,yscale=-1,scale=1.25]
\draw[usual] (0,0) to (1,0);
\draw[usual] (0,1) to (1,1);
\draw[match] (1,1) to (1,0);
\draw[match] (0,0) to (0,1);
\draw[usual] (0,0) to [ out =-45 , in = 180] (1,-0.5);
\draw[usual] (1,-0.5) to [ out =0 , in = 0](1,1);
\draw[usual] (0,1) to [ out =180 , in = 180] (0,-0.5);
\draw[usual] (0,-0.5) to [ out =0 , in = -135](1,0);
\node at (0,1)[circle,fill,inner sep=1.55pt]{};
\node at (0,0)[circle,fill,inner sep=1.55pt]{};
\node at (1,0)[circle,fill,inner sep=1.55pt]{};
\node at (1,1)[circle,fill,inner sep=1.55pt]{};
\node[below=5pt] at (current bounding box.south) {};
\end{tikzpicture}
,
\end{gather*}
where we note that the final two perfect matching graphs are equivalent; we leave it as an exercise to verify this. 
\end{Example} 

\begin{Definition}
Two perfect matching graphs are \emph{equivalent} if they are equivalent as ribbon graphs and the induced isomorphism of their underlying graphs preserves the perfect matching. 
\end{Definition}

\begin{Notation}
From now on, we only consider ribbon graphs with connected underlying graphs $G = (V , E, r)$ with $E$ nonempty; in particular, all vertices have degree at least one.
\end{Notation}

Given any ribbon graph, we can construct a canonical perfect matching graph, whose underlying graph is trivalent, by altering the vertices as follows:

\begin{Definition}\label{D:blowup}
The \emph{blowup} of a ribbon graph is obtained by replacing every vertex with vertices arranged in a circle:
\begin{gather*}
\begin{tikzpicture}[anchorbase,yscale=-1,scale=1.25]
\draw[usual] (0,0) to (0.5,0.25);
\draw[usual] (0.5,0.25) to (1,0);
\draw[usual] (0.5,0.25) to (0.5,0.8);
\node at (0.5,0.25)[circle,fill,inner sep=1.55pt]{};
\end{tikzpicture} 
\longrightarrow
\begin{tikzpicture}[anchorbase,yscale=-1,scale=1.25]
\draw[usual] (0,-0.2) to (0.27,0.05);
\draw[usual] (0.72,0.05) to (1,-0.2);
\draw[usual] (0.5,0.55) to (0.5,0.85);
\node at (0.5,0.55)[circle,fill,inner sep=1.55pt]{};
\node at (0.72,0.05)[circle,fill,inner sep=1.55pt]{};
\node at (0.27,0.05)[circle,fill,inner sep=1.55pt]{};
\draw[usual] (0.5,0.25) circle (0.3cm);
\end{tikzpicture}
,\quad
\begin{tikzpicture}[anchorbase,scale=1.25]
\draw[usual] (0,0.25) to (0.5,0.25);
\draw[usual] (1,0.25) to (0.5,0.25);
\draw[usual] (0.5,-0.25) to (0.5,0.25);
\draw[usual] (0.5,0.75) to (0.5,0.25);
\node at (0.5,0.25)[circle,fill,inner sep=1.55pt]{};
\end{tikzpicture}
\longrightarrow
\begin{tikzpicture}[anchorbase,scale=1.25]
\draw[usual] (0,0.25) to (0.2,0.25);
\draw[usual] (1,0.25) to (0.8,0.25);
\draw[usual] (0.5,-0.25) to (0.5,-0.05);
\draw[usual] (0.5,0.75) to (0.5,0.55);
\draw[usual] (0.5,0.25) circle (0.3cm);
\node at (0.2,0.25)[circle,fill,inner sep=1.55pt]{};
\node at (0.8,0.25)[circle,fill,inner sep=1.55pt]{};
\node at (0.5,-0.05)[circle,fill,inner sep=1.55pt]{};
\node at (0.5,0.55)[circle,fill,inner sep=1.55pt]{};
\end{tikzpicture}
,\quad
\text{etc.},
\end{gather*}
where one uses as many vertices as the degree of the start vertex.
\end{Definition}


\begin{Lemma}\label{L:blowupiso}
The blowup of a ribbon graph is trivalent and admits a perfect matching. Moreover, if two ribbon graphs are equivalent, then so are their blowups.
\end{Lemma}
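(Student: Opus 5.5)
Write $\Gamma'$ for the blowup of $\Gamma$. We prove the three claims (trivalence, perfect matching, invariance) directly from \autoref{D:blowup}, using the local nature of the construction.

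\emph{Trivalence.} Let $v$ be a vertex of degree $d$. It is replaced by $d$ new vertices $v_1,\dots,v_d$ arranged cyclically on a small circle, in the cyclic order the ribbon structure gives the half-edges at $v$. Each $v_i$ is incident to exactly one original half-edge, namely the one it inherits, together with the two arcs of the circle joining it to $v_{i-1}$ and $v_{i+1}$ (indices mod $d$). So $v_i$ has degree three. Loops need a little care: a loop at $v$ contributes two half-edges, hence two distinct new vertices, so the count is unaffected. For $d=1$ the circle is a single vertex with one loop, which again gives degree three. For $d=2$ the circle consists of two parallel edges, and each new vertex still has degree three as a multigraph.

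\emph{Perfect matching.} Let $M$ be the set of original edges, one for each edge of $G$, with each endpoint now attached to the new vertex that inherits the relevant half-edge. Every new vertex inherits exactly one half-edge, so it meets exactly one edge of $M$; a loop at $v$ meets two distinct new vertices, each once. Hence $M$ is a perfect matching in the sense of \autoref{D:perfectgraph}. In fact this $M$ is canonical: it is exactly the edge set of $G$, and the circle edges form its complement, a disjoint union of cycles.

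\emph{Invariance.} We argue with surfaces rather than ribbon moves. The associated surface of $\Gamma'$ is obtained from $\Gamma$ by drawing, inside each vertex disc, a smaller concentric circle and declaring the points where the edge bands meet it to be the new vertices. Since this disc picture only depends on the cyclic order of half-edges at $v$, the surface is unchanged, $\overline{\Gamma'}=\overline{\Gamma}$, and $G'$ sits inside it. Now suppose $f\colon\overline{\Gamma}_1\to\overline{\Gamma}_2$ is a homeomorphism inducing a graph isomorphism $G_1\to G_2$. First isotope $f$ so that it maps a small regular neighbourhood of each vertex onto one of the image vertex. We may also arrange that it maps these disc neighbourhoods radially, so that it carries the concentric circles to concentric circles and half-edges to the corresponding half-edges. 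This works because $f$ is a homeomorphism of surfaces preserving the embedded graph, so locally at a vertex it preserves or reverses the cyclic order of half-edges. The isotoped $f$ then induces an isomorphism $G_1'\to G_2'$ that sends original edges to original edges, so it preserves the canonical matching as well.

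The main obstacle is this normalization of $f$ near vertices, an Alexander-trick/regular-neighbourhood argument. As a fallback, one could use the ribbon-move lemma instead: check that each planar isotopy, Reidemeister move and fork move \autoref{ribbonmove4} lifts to a sequence of ribbon moves on the blowups. Away from vertices this is immediate. At a vertex, a fork move on $\Gamma$ becomes, on $\Gamma'$, a strand swinging around the small circle, which can be realized as a sequence of fork moves at the new trivalent vertices together with Reidemeister II/III moves.
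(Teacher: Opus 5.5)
Your proposal is correct, and its main route differs from the paper's. Your trivalence and matching arguments agree with the paper, which calls trivalence immediate and takes the original edges as the matching; your checks for loops and for $d=1,2$ fill in what it leaves implicit.

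\textbf{How the paper proves invariance.} The paper's argument is your fallback. It works with ribbon diagrams and the Reidemeister-type theorem. Planar isotopies and Reidemeister moves lift verbatim to the blowups. A fork move at a vertex of $\Gamma$ lifts to sweeping the strand across the new trivalent vertices by repeated fork moves, three in the degree-three case drawn there.

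\textbf{How your main route differs.} You go back to the definition of equivalence. You identify the blowup intrinsically as the graph in the closed surface $\overline{\Gamma}$ obtained by drawing a small circle inside each vertex disc, and you transport it along the homeomorphism $f$. This buys three things:
\begin{enumerate}
\item it avoids the cited, nontrivial Reidemeister theorem and its asserted extension to higher-degree vertices;
\item it uses neither diagrams nor orientability;
\item it makes it transparent that the canonical matching is preserved.
\end{enumerate}
The price is the local normalization of $f$ near vertices. In the topological category this needs Schoenflies/Alexander-trick input; in the PL setting it is just uniqueness of regular neighbourhoods.

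\textbf{A simplification.} You can avoid isotoping $f$ at all. Blow up $\Gamma_1$ using the circles $f^{-1}(C_w)$, where $C_w$ are the circles chosen for $\Gamma_2$. These bound discs around the vertices of $\Gamma_1$ meeting each half-edge exactly once. The only remaining input is that the blowup is independent, up to equivalence, of the choice of such a circle.

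\textbf{A wording point.} The associated surface of the blowup is not literally unchanged: it acquires one extra boundary component per original vertex. Only $\overline{\Gamma'}=\overline{\Gamma}$ enters the definition of equivalence, so your argument is unaffected.
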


\begin{proof}
The first statement is immediate by construction, and there is a perfect matching that can be associated with the blowup by using the edges of the original graph, e.g.
\begin{gather*}
\begin{tikzpicture}[anchorbase,scale=1.25]
\draw[match] (0,0.25) to (0.2,0.25);
\draw[match] (1,0.25) to (0.8,0.25);
\draw[match] (0.5,-0.25) to (0.5,-0.05);
\draw[match] (0.5,0.75) to (0.5,0.55);
\draw[usual] (0.5,0.25) circle (0.3cm);
\node at (0.2,0.25)[circle,fill,inner sep=1.55pt]{};
\node at (0.8,0.25)[circle,fill,inner sep=1.55pt]{};
\node at (0.5,-0.05)[circle,fill,inner sep=1.55pt]{};
\node at (0.5,0.55)[circle,fill,inner sep=1.55pt]{};
\end{tikzpicture}
.
\end{gather*}
For the second statement, let $\Gamma_1$, $\Gamma_2$ be two equivalent ribbon graphs, with $\Gamma_1^\flat$, $\Gamma_2^\flat$ their respective blowups. Away from the vertices, $\Gamma_2$ is obtained from $\Gamma_1$ using a finite sequence of planar isotopy \autoref{eq:plane} and Reidemeister moves \autoref{eq:reide}, and it is easy to see that the same is true of $\Gamma_2^\flat$ and $\Gamma_1^\flat$. At the vertices, we simply note that \eg \,
\begin{gather*}
\begin{tikzpicture}[anchorbase,yscale=-1,scale=1.25]
\draw[usual] (0,-0.2) to (0.27,0.05);
\draw[usual] (0.72,0.05) to (1,-0.2);
\draw[usual] (0.5,0.55) to (0.5,0.85);
\node at (0.5,0.55)[circle,fill,inner sep=1.55pt]{};
\node at (0.72,0.05)[circle,fill,inner sep=1.55pt]{};
\node at (0.27,0.05)[circle,fill,inner sep=1.55pt]{};
\draw[usual] (0.5,0.25) circle (0.3cm);
\draw[usual] (0.95,0.25) arc (0:180:0.45);
\end{tikzpicture} \quad \longleftrightarrow \quad \begin{tikzpicture}[anchorbase,yscale=-1,scale=1.25]
\draw[usual] (0,-0.2) to (0.27,0.05);
\draw[usual] (0.72,0.05) to (1,-0.2);
\draw[usual] (0.5,0.55) to (0.5,0.85);
\node at (0.5,0.55)[circle,fill,inner sep=1.55pt]{};
\node at (0.72,0.05)[circle,fill,inner sep=1.55pt]{};
\node at (0.27,0.05)[circle,fill,inner sep=1.55pt]{};
\draw[usual] (0.5,0.25) circle (0.3cm);
\draw[usual] (0.05,0.2) arc (180:360:0.45);
\end{tikzpicture}
\end{gather*}
where we have used the final ribbon move \autoref{ribbonmove4} three times to move the unattached edge across the three vertices on the blowup, and similarly for starting vertices of other degrees.
\end{proof}

Thus, by \autoref{L:blowupiso}, we can and will restrict to ribbon graphs whose underlying graphs are trivalent with at least one perfect matching.


\subsection{Polynomial invariants}\label{S:polyinv} 

We now consider a generic polynomial invariant of ribbon graphs using specified relations.

\begin{Definition}\label{D:mobpoly}
Let $G$ be a trivalent graph with perfect matching $M$, and let $\Gamma_M$ be a perfect matching graph with underlying graph $G$. The element $\langle \Gamma_M \rangle \in \mathbb{Z}[A, B, C]$, called the \emph{bracket} (or \emph{Penrose bracket polynomial}) of $\Gamma_M$, is characterized by:
\begin{align}
\Bigg\langle\begin{tikzpicture}[anchorbase,yscale=-1,scale=0.75]
\draw[usual] (0,0) to (0.5,0.25);
\draw[usual] (0.5,0.25) to (1,0);
\draw[match] (0.5,0.25) to (0.5,1);
\draw[usual] (0,1.25) to (0.5,1);
\draw[usual] (1,1.25) to (0.5, 1);
\node at (0.5,0.25)[circle,fill,inner sep=1.55pt]{};
\node at (0.5,1)[circle,fill,inner sep=1.55pt]{};
\end{tikzpicture} \Bigg\rangle
\quad &= \quad
A \, \Bigg\langle\begin{tikzpicture}[anchorbase,yscale=-1,scale=0.75]
\draw[usual] (0,0) to (0,1);
\draw[usual] (0.5,0) to (0.5,1);
\end{tikzpicture} \Bigg\rangle \quad + \quad B \,\Bigg\langle\begin{tikzpicture}[anchorbase,yscale=-1,scale=0.75]
\draw[usual] (0,0) to (0.5,1);
\draw[usual] (0.5,0) to (0,1);
\end{tikzpicture} \Bigg\rangle, \label{skein1} \\
\Bigg\langle\begin{tikzpicture}[anchorbase,yscale=-1,scale=0.75]
\filldraw[color=black, fill=white, thick](-1,0) circle (0.5);
\end{tikzpicture}\Bigg\rangle \quad &= \quad C, \label{skein2} \\
\langle \Gamma_1 \sqcup \Gamma_2 \rangle \quad &= \quad \langle \Gamma_1 \rangle \cdot \langle \Gamma_2 \rangle. \label{skein3}
\end{align}
Resolving a perfect matching edge by \begin{tikzpicture}[anchorbase,scale=0.25]
\draw[usual] (0,0) to (0,1);
\draw[usual] (0.5,0) to (0.5,1);
\end{tikzpicture} is called a \emph{$0$-smoothing} and by \begin{tikzpicture}[anchorbase,yscale=-1,scale=0.25]
\draw[usual] (0,0) to (0.5,1);
\draw[usual] (0.5,0) to (0,1);
\end{tikzpicture} a \emph{$1$-smoothing}.
\end{Definition}

\autoref{D:mobpoly} is inspired by \cite{Pen-negative}, which is the case with $A=1,B=-1$ in \autoref{skein1} and $C=3$ in \autoref{skein2}. By \cite{Kho-jones}, it is now mandatory to ask 
whether one can categorify \autoref{skein1}--\autoref{skein3}, so we do this below.

\begin{Remark}\label{R:circle}
In order for the l.h.s. of \autoref{skein2} to make sense as a bracket, we should also include the circle and disjoint unions of circles in the plane, up to a finite sequence of Reidemeister moves and planar isotopy, as additional equivalence classes in addition to the ribbon graphs. 
\end{Remark}

\begin{Remark}\label{brackforribb}
The bracket can of course be defined for any ribbon graph by using the canonical perfect matching induced from the blowup and \autoref{L:blowupiso}.
\end{Remark}

\begin{Proposition}\label{P:bracketinv}
The bracket is a ribbon graph invariant.
\end{Proposition}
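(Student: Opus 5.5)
The plan is to show that the bracket is well defined on ribbon diagrams, and then that it is unchanged under each ribbon move. The key observation is that the skein relations \autoref{skein1}--\autoref{skein3} completely determine $\langle \Gamma_M\rangle$ as a state sum:
\[
\langle \Gamma_M\rangle=\sum_{s\in\{0,1\}^{M}} A^{|s|_0}B^{|s|_1}C^{c(s)},
\]
where $c(s)$ is the number of closed curves obtained by smoothing every matching edge according to $s$. After all matching edges are smoothed, each vertex has exactly two remaining nonmatching half-edges. The result is therefore a collection of immersed closed curves in the plane, so it is a disjoint union of circles in the sense of \autoref{R:circle}. Its value is forced by \autoref{skein2} and \autoref{skein3}. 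Existence and uniqueness of the bracket follow, because the order in which matching edges are resolved is irrelevant: the smoothings are local and commute. Since $c(s)$ counts components, it depends only on the resulting closed 1-manifold, not on its planar immersion. This is exactly why circles are taken up to Reidemeister moves in \autoref{R:circle}.

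For invariance, I will invoke the Reidemeister-type Lemma: two ribbon diagrams of equivalent ribbon graphs differ by planar isotopy \autoref{eq:plane}, Reidemeister-type moves \autoref{eq:reide}, and fork moves \autoref{ribbonmove4}. For a perfect matching graph, the moves also carry the matching edges along, since equivalences preserve $M$. It therefore suffices to check, for each move, that there is a bijection of states under which $c(s)$ is preserved.

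\begin{itemize}
\item Planar isotopy and the moves \autoref{eq:reide} act on edges away from the vertices. The set $M$ and the smoothing data are unchanged, so states correspond identically. In each state the moves act on the closed curves only by planar isotopy or Reidemeister moves, which preserve the number of components.
\item For the fork move \autoref{ribbonmove4}, an edge is swung around a trivalent vertex; one of the edges at that vertex may be the matching edge. The state bijection is again the identity on $\{0,1\}^M$. In any state, the local picture near the vertex becomes two strands passing through a smoothing, and the fork move becomes a sequence of planar isotopies and Reidemeister II/III moves of the swung strand across those strands.
\end{itemize}

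I expect the main obstacle to be this last point: identifying precisely how the fork move looks after smoothing, especially when the swung edge is itself the matching edge at that vertex, or when it crosses the matching edge. I would handle this by listing the three cases (swung edge matched, or one of the two others matched) and drawing the $0$- and $1$-smoothings in each. In every case the smoothed pictures differ by moving an arc across a small neighbourhood of the vertex, which is a composition of Reidemeister II and III moves on curves, so $c(s)$ is preserved.

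Finally, \autoref{brackforribb} together with \autoref{L:blowupiso} transfers the result to arbitrary ribbon graphs via their blowups.
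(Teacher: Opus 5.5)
Your proposal is correct and follows essentially the paper's route. Both reduce to the ribbon moves and note that planar isotopies and Reidemeister moves away from the matching edges are harmless. Both then treat the fork move next to a matching edge as the only real case: the $0$-smoothing term is unchanged, and the $1$-smoothing term is handled by a flat Reidemeister III move. Since every vertex is incident to exactly one matching edge, this case always occurs, not merely ``may''. Your state-sum formulation just makes the argument state-by-state, and it also absorbs the well-definedness that the paper proves separately afterwards via the hypercube of states.
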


\begin{proof}
Recall the relations in \autoref{eq:plane}, \autoref{eq:reide} and \autoref{ribbonmove4}; the ribbon moves.
We are required to show that if $\Gamma_M$ and $\Gamma_{M'}'$ are equivalent perfect matching graphs, i.e. there exists a finite sequence of ribbon moves that takes $\Gamma_M$ to $\Gamma_{M'}'$, then $\langle \Gamma_M \rangle = \langle \Gamma_{M'}' \rangle$. Therefore, if we can show that the ribbon moves do not affect the evaluation of the skein relations \autoref{skein1}--\autoref{skein3}, then we are done, since we can of course use them freely away from the evaluation sites.

We first note that \autoref{skein3} is true regardless of what form $\Gamma_1$ and $\Gamma_2$ take. Also, the evaluation of the circle in \autoref{skein2} is not affected by Reidemeister moves or planar isotopy by construction as discussed in \autoref{R:circle}. Finally, it is clear that planar isotopies, and Reidemeister moves away from the perfect matching edge, do not affect the evaluation in \autoref{skein1}. The only part left to prove is then 
\begin{align*}
\Bigg\langle\begin{tikzpicture}[anchorbase,yscale=-1,scale=0.75]
\draw[usual] (0,0) to (0.5,0.25);
\draw[usual] (0.5,0.25) to (1,0);
\draw[usual, dotted] (0.5,0.25) to (0.5,1);
\draw[usual] (0,1.25) to (0.5,1);
\draw[usual] (1,1.25) to (0.5, 1);
\draw[usual, red] (0.95,0.85) arc (0:180:0.45);
\node at (0.5,0.25)[circle,fill,inner sep=1.55pt]{};
\node at (0.5,1)[circle,fill,inner sep=1.55pt]{};
\end{tikzpicture} \Bigg\rangle 
\quad &= \quad
\Bigg\langle\begin{tikzpicture}[anchorbase,yscale=-1,scale=0.75]
\draw[usual] (0,0) to (0.5,0.25);
\draw[usual] (0.5,0.25) to (1,0);
\draw[usual, dotted] (0.5,0.25) to (0.5,1);
\draw[usual] (0,1.25) to (0.5,1);
\draw[usual] (1,1.25) to (0.5, 1);
\draw[usual, red] (0.05,0.3) arc (180:360:0.45);
\node at (0.5,0.25)[circle,fill,inner sep=1.55pt]{};
\node at (0.5,1)[circle,fill,inner sep=1.55pt]{};
\end{tikzpicture} \Bigg\rangle
\end{align*}
where the marked red edge can either be a perfect matching or generic edge. On both sides, the $0$-smoothing term in \autoref{skein1} is clearly unchanged, while for the $1$-smoothing term, equality follows directly from the third Reidemeister move. 
\end{proof}

Up to this point, we have not shown that the bracket is well-defined i.e. the order in which we resolve the perfect matching edges does not affect the result. In order to do so, we make use of the concept of a \emph{hypercube of states}. The setup is as follows: let $G = (V, E, r)$ be a trivalent graph, $M = \{e_1, e_2,\ldots, e_\ell\} \subset E$ be a perfect matching of $G$, and let $\Gamma_M$ be a perfect matching graph with underlying graph $G$. By construction, note that $\ell = |V|/2$. If we resolve each perfect matching edge $e_i$ by either a $0$- or $1$-smoothing, we obtain a set of circles in the plane called a \emph{state} of $\Gamma_M$, and there are $2^\ell$ of them. Given $\alpha = (\alpha_1, \alpha_2, \ldots, \alpha_\ell) \in \{0,1\}^\ell$, we let $\Gamma_\alpha$ denote the state of $\Gamma_M$
where the perfect matching edge $e_i$ has been resolved by an $\alpha_i$-smoothing.

We can arrange the states $\Gamma_\alpha$ as vertices in a hypercube as shown 
in the following example.

\begin{Example}\label{E:hypercube}
Given the perfect matching graph $E_2$ from \autoref{E:permatexm}, which has two perfect matching edges, we have $2^2=4$ states ${\Gamma_{(0,0)}}$, ${\Gamma_{(1,0)}}$, ${\Gamma_{(0,1)}}$, and ${\Gamma_{(1,1)}}$; these states have 2, 1, 1, and 2 circles respectively. It is useful to visualize the hypercube of states as follows:
\begin{equation*}
\begin{tikzcd}
& \begin{tikzpicture}
\draw[usual] (0,0) circle (0.5cm);
\node[below=5pt] at (current bounding box.south) {$(1,0)$};
\end{tikzpicture} 
\arrow[dr, leftrightarrow, shift left=1.8ex] & \\
\begin{tikzpicture}
\draw[usual] (0,0) circle (0.5cm);
\draw[usual] (1.5,0) circle (0.5cm);
\node[below=5pt] at (current bounding box.south) {$(0,0)$};
\end{tikzpicture}
\arrow[ur, leftrightarrow, shift left=1.8ex] \arrow[dr, leftrightarrow, shift right=1.8ex]
& & \begin{tikzpicture}
\draw[usual] (0,0) circle (0.5cm);
\draw[usual] (1.5,0) circle (0.5cm);
\node[below=5pt] at (current bounding box.south) {$(1,1)$};
\end{tikzpicture} \\
& \begin{tikzpicture}
\draw[usual] (0,0) circle (0.5cm);
\node[below=5pt] at (current bounding box.south) {$(0,1)$};
\end{tikzpicture} 
\arrow[ur, leftrightarrow,, shift right=1.8ex]&
\end{tikzcd}
.
\end{equation*}
This should come to no surprise for readers familiar with \cite{Kau-state,Kho-jones}.
\end{Example}

In general, we can arrange states with constant $|\alpha|=\alpha_1 + \alpha_2 + \ldots+ \alpha_\ell$ as columns, with $|\alpha|$ increasing from left to right; note that the order in which states appear within a fixed column is arbitrary, and that $0 \leq |\alpha| \leq \ell$. Of course, the use of columns in this manner is simply a useful visualization tool, and in practice, the symmetry of the hypercube is undisturbed. The bidirectional arrows, which correspond to edges of the hypercube, appear between states $\Gamma_\alpha$ and $\Gamma_{\alpha'}$ if and only if $\alpha$ and $\alpha'$ differ on a single component. 

\begin{Lemma}
The hypercube of states is a ribbon graph invariant.
\end{Lemma}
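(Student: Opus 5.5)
The plan is to follow the proof of \autoref{P:bracketinv} almost verbatim, with the scalar state sum replaced by the hypercube itself. The precise claim we aim for is this. If $\Gamma_M$ and $\Gamma'_{M'}$ are equivalent perfect matching graphs, the equivalence induces a bijection between their perfect matching edges, and hence an identification $\{0,1\}^\ell\cong\{0,1\}^\ell$. Under this identification, each state $\Gamma_\alpha$ corresponds to $\Gamma'_\alpha$, both being disjoint unions of the same number of circles in the plane, up to planar isotopy and Reidemeister moves in the sense of \autoref{R:circle}. Moreover, this identification is compatible with the edges of the hypercube. Since the edges are determined purely by which coordinates differ, the edge compatibility is automatic once the vertex labels match. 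So the content of the lemma is that the states, as equivalence classes of circle configurations, are unchanged.

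By the Reidemeister-type theorem for ribbon graphs, it suffices to treat a single ribbon move: a planar isotopy \autoref{eq:plane}, a Reidemeister-type move \autoref{eq:reide}, or a fork move \autoref{ribbonmove4}. The requirement that the equivalence preserves the perfect matching ensures that matching edges are carried to matching edges. Consequently the labelling $e_1,\dots,e_\ell$, and thus the indexing by $\alpha$, is transported along the move.

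For a fixed $\alpha$ and a fixed move, we resolve every matching edge according to $\alpha$ on both sides. If the move takes place away from the matching edges and vertices, it survives in every state unchanged as a planar isotopy or Reidemeister move of circles. Hence $\Gamma_\alpha$ and $\Gamma'_\alpha$ are equivalent in the sense of \autoref{R:circle}.

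The only real case, and the one we expect to be the main obstacle, is the fork move, where a strand passes over a vertex. Every vertex is an endpoint of exactly one matching edge, so a fork move at a vertex $v$ takes place at one end of some matching edge $e_i$. This is exactly the local picture appearing in the proof of \autoref{P:bracketinv}, and we split it according to $\alpha_i$.
\begin{enumerate}
\item If $\alpha_i=0$, the two strands on the side of $v$ become a single arc. The passing strand then crosses that arc either above or below, and the two positions differ by planar isotopy or a Reidemeister II move.
\item If $\alpha_i=1$, the resolution produces a crossing of two arcs. Moving the passing strand from one side of this crossing to the other is precisely a Reidemeister III move.
\end{enumerate}
This holds regardless of whether the passing (red) edge is itself a matching edge. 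If it is, we first resolve it according to its coordinate of $\alpha$; it then becomes either two parallel strands or a crossing pair of strands, and we apply the same argument to each strand in turn. The subtlety to check carefully is that the fork move at a trivalent vertex moves the strand across all incident edges at once. After smoothing, only the resolved arcs remain, so the check reduces to the two pictures above.

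Combining these cases, each ribbon move induces an isomorphism of hypercubes: the states correspond vertex by vertex and the edges correspond via the same coordinates. Composing these isomorphisms along a finite sequence of moves proves the lemma.
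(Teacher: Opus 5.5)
Your proposal is correct and is essentially the paper's argument. The paper proves the lemma by appealing to the proof of \autoref{P:bracketinv}: the $0$- and $1$-smoothings are each invariant under the ribbon moves, with the fork move handled by planar isotopy in the $0$-smoothing and by a Reidemeister III move in the $1$-smoothing, which matches your case split on $\alpha_i$ (your version is just more explicit; note only that fork moves separating one non-matching edge at $v$ from the other two also need a Reidemeister II move in the $1$-smoothing case, which is harmless).
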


\begin{proof}
From the proof that the skein relation \autoref{skein1} is invariant under ribbon moves and planar isotopy in \autoref{P:bracketinv}, we know that $0$- and $1$-smoothings are invariant separately, and so the result follows.
\end{proof}

Finally, to each vertex corresponding to some state $\Gamma_\alpha$ in the hypercube of states, we associate the quantity $A^{\ell-|\alpha|}B^{|\alpha|}C^{k_\alpha}$, where $k_\alpha$ is the number of circles in $\Gamma_\alpha$, and then sum over all the vertices.

\begin{Lemma}
The result $\sum_{\alpha}A^{\ell-|\alpha|}B^{|\alpha|}C^{k_\alpha}$ is $\langle \Gamma_M \rangle$. Since the order in which the vertices are summed has no effect, $\langle \Gamma_M \rangle$ is therefore well-defined.
\end{Lemma}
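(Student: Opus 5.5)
We argue by induction on the number $\ell$ of perfect matching edges. The aim is to show that any sequence of applications of \autoref{skein1}--\autoref{skein3} that fully evaluates $\langle \Gamma_M \rangle$ produces exactly the state sum $\sum_{\alpha}A^{\ell-|\alpha|}B^{|\alpha|}C^{k_\alpha}$. Independence from the order of resolution then follows, because this sum is indexed by the vertex set $\{0,1\}^\ell$ of the hypercube of states and does not involve any ordering of the edges $e_1,\ldots,e_\ell$.

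The base case is $\ell=0$. Here $\Gamma_M$ has no matching edges. Because every vertex is incident to exactly one matching edge, there are then no vertices at all, so $\Gamma_M$ is a disjoint union of $k$ circles in the plane, regarded as in \autoref{R:circle}. By \autoref{skein2} and \autoref{skein3}, the only evaluation is $C^{k}$. This is the unique term of the state sum, since $\{0,1\}^0$ has a single element, $\Gamma_{()}=\Gamma_M$ and $k_{()}=k$.

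For the inductive step, suppose some evaluation begins by applying \autoref{skein1} to an arbitrary matching edge $e_i$. This gives
\[
\langle \Gamma_M\rangle = A\langle \Gamma_M^{i,0}\rangle + B\langle \Gamma_M^{i,1}\rangle,
\]
where $\Gamma_M^{i,\epsilon}$ denotes the diagram with $e_i$ replaced by an $\epsilon$-smoothing. Each $\Gamma_M^{i,\epsilon}$ is a perfect matching diagram with the $\ell-1$ remaining matching edges. Possibly it has extra closed circle components, which are handled by \autoref{skein3} and \autoref{R:circle}. By the induction hypothesis, the remaining evaluation of $\langle \Gamma_M^{i,\epsilon}\rangle$, in whatever order it continues, equals the state sum over $\beta\in\{0,1\}^{\ell-1}$.

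The states of $\Gamma_M^{i,\epsilon}$ are exactly the states $\Gamma_\alpha$ of $\Gamma_M$ with $\alpha_i=\epsilon$, where $\alpha$ is obtained by inserting $\epsilon$ into $\beta$ at position $i$. The circle counts $k_\alpha$ agree, since the smoothings are purely local. Multiplying by $A$ (when $\epsilon=0$) or $B$ (when $\epsilon=1$) turns $A^{\ell-1-|\beta|}B^{|\beta|}$ into $A^{\ell-|\alpha|}B^{|\alpha|}$. Adding the two halves recovers the full hypercube sum. Since $i$ was arbitrary, every order of resolution gives the same answer.

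The main subtlety is the bookkeeping: one has to check that smoothing $e_i$ leaves a legitimate object to which the induction applies. The resulting diagram may contain floating circles, or intermediate diagrams that are not ribbon graphs in the strict sense. We handle this by carrying out the induction over diagrams that are disjoint unions of perfect matching diagrams and circles, as permitted by \autoref{R:circle} and \autoref{skein3}. The invariance of $0$- and $1$-smoothings under ribbon moves, from the proof of \autoref{P:bracketinv}, ensures that these intermediate objects are well defined.
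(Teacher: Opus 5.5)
Your induction on $\ell$ is correct and is exactly the standard state-sum expansion that the paper dismisses with ``This is straightforward to see'', so it is essentially the paper's implicit argument written out in full. The one loose end is that your inductive step only treats evaluations whose first move is \autoref{skein1}. An evaluation may instead begin by splitting off components with \autoref{skein3}, which need not lower $\ell$. This case is covered by two observations: the state sum is multiplicative under disjoint union (states of $\Gamma_1\sqcup\Gamma_2$ are pairs of states, and $\ell$, $|\alpha|$, $k_\alpha$ all add), and you can induct lexicographically on $\ell$ and then on the number of components.
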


\begin{proof}
This is straightforward to see.
\end{proof}

\subsection{Graph homology}\label{graphhomo} 

We can obtain another ribbon graph invariant using homology, from which the bracket emerges for specific choices of $A, B$, and $C$, and which is strictly more powerful; the story is analogous to the emergence of the Jones polynomial from Khovanov homology in knot theory. We do so by enforcing certain conditions on our MTQFTs in order to build an invariant chain complex whose graded Euler characteristic is the bracket.

First suppose that we are given an MTQFT $\mathcal{F}: \mcob{} \rightarrow R\textbf{Mod}$ with $\mathcal{F}(1) = V$. Let $\mathcal{G}$ be a finitely-generated abelian group with generators $g_1, g_2, \ldots, g_m$. Suppose that the finite-dimensional free $R$-module $V$ has a $\mathcal{G}$-grading i.e. $V = \oplus_{g \in \mathcal{G}} V_g$ with $\mathcal{G}$ considered as a set and only finitely many $V_g$ are nonzero. Given a tuple $s= (s_1, s_2, \ldots, s_m)$ of integers and an element $g = g_1^{p_1}g_2^{p_2}\cdots g_m^{p_m} \in \mathcal{G}$ for some integers $p_1, p_2,  \ldots, p_m$, we can always shift the grading by setting $(V\{s\})_g = V_{g'}$ where $g' = g_1^{p_1-s_1}g_2^{p_2-s_2}\cdots g_m^{p_m-s_m}$. The \emph{graded dimension} of $V$ is then a polynomial in the variables $q_1, q_2, \ldots, q_m$ given by 
\begin{gather*}
q\text{dim}(V) = \sum_{g\in \mathcal{G}} q_1^{p_1}q_2^{p_2} \cdots q_m^{p_m} \text{dim}(V_g).
\end{gather*}

\begin{Example}\label{E:Grading}
For $\mathcal{G}=\Z/n\Z\times\Z/2\Z$ this really means we have two grading variables, say $q$ and $b$, that satisfy $q^n=1=b^2$. In this case the grading would be a tuple $(x,y)$ where we read the first entry modulo $n$ and the second modulo $2$. We will use similar conventions for other gradings.
\end{Example}

Further suppose that the following relation holds on $V$:
\begin{gather}\label{specrel}
\mathcal{F}(m^2)=\mathcal{F}(h),
\end{gather}
which is a more restricted relation than \autoref{mobrel} on $\mcob{}$; in particular, $\mathcal{F}$ is not faithful. 

\begin{Remark}
The relation \autoref{specrel} may look a bit mysterious at first. Topologically, the relation is
$m^3=hm$, cf. \autoref{mobrel}, and replacing this by $m^2=h$ would amount to the identity
\begin{gather*}
\begin{tikzpicture}[baseline ={([yshift=-.5ex]current bounding box.center)}, draw, thick,scale=0.25]
\draw (0,0) circle (2cm);
\draw (-2,0) arc (180:360:2 and 0.6);
\draw[dashed, thick] (2,0) arc (0:180:2 and 0.6);
\node at (-0.66,0.8){$\lightning$};
\node at (0.66,0.8){$\lightning$};
\end{tikzpicture}
=
\begin{tikzpicture}[scale=0.25, draw, thick, baseline ={([yshift=-.5ex]current bounding box.center)}]
\draw (0,0) ellipse (3cm and 2cm);
\draw (-1.5, 0.2) arc (180:360:1.5 and 0.6);
\draw (1,-0.2) arc (0:180:1 and 0.6);
\end{tikzpicture}
\quad\text{(not a topological identity).}
\end{gather*}
In other words, it would identify the Klein bottle with the torus, which is, of course, not true.

The reason that \autoref{specrel} can nevertheless hold in our examples is that we are no longer working faithfully with the topological monoid. Roughly speaking, 
and with Green relations/cells \cite{Gr-structure-semigroups} in mind (using the convention of \cite{KhSiTu-monoidal-cryptography}),
we pass to a higher cell of the monoid
$\langle m,h \mid m^3=hm,\; mh=hm\rangle$, where $m$ becomes locally invertible. In that setting one may cancel the remaining $m$, giving the effective relation $m^2=h$. 
Alternatively, the respective monoid algebra is sandwich cellular in the sense of \cite{Tu-sandwich-cellular}, so one can use the standard cell theory to ``cut out'' the relevant parts.
We call this \emph{pseudo-invertibility}.

Concrete solutions are given in \autoref{S:ncolorchoice}.
\end{Remark}

We are now ready to begin constructing our chain complex. Fix a perfect matching graph $\Gamma_M$ with trivalent underlying graph $G = (V, E, r)$ and perfect matching $M = \{e_1, e_2, \ldots, e_\ell\} \subset E$. Form the hypercube of states as shown e.g. in \autoref{E:hypercube}. We now give the edges a fixed direction according to the following rule: there is a directed edge $\Gamma_\alpha \rightarrow  \Gamma_{\alpha'}$ if and only if $|\alpha'| = |\alpha| +1$. To each state $\Gamma_\alpha$, we then associate the $\mathcal{G}$-graded $R$-module $V_\alpha = \otimes^{k_\alpha}V \{s|\alpha|\}$ where $k_\alpha$ is the number of circles in $\Gamma_\alpha$ and $s$ is some constant shift. Define a complex $C^{*, *}(\Gamma_M)$ by 
\begin{gather}\label{compdef}
C^{i, *}(\Gamma_M) = \oplus_{\alpha \in \{0,1\}^\ell, \, |\alpha|=i} V_\alpha,
\end{gather}
and it is trivial outside of $i =1, \ldots, \ell$. $C^{i, *}(\Gamma_M)$ is also $\mathcal{G}$-graded with $C^{i, g}(\Gamma_M)=\big(C^{i, *}(\Gamma_M)\big)_g$.

\begin{Remark}
Given a shift $s = (s_1, s_2, \ldots, s_m)$, we define $s|\alpha| = (s_1|\alpha|, s_2|\alpha|, \ldots, s_m|\alpha|)$ i.e. component-by-component multiplication.
\end{Remark}

\begin{Remark}\label{R:correscomp}
We will also think of the complex $C^{*, *}(\Gamma_M)$ as arranged in a hypercube, with vertices and directed edges in bijective correspondence with those in the hypercube of states, i.e. $\Gamma_\alpha \leftrightarrow V_\alpha$ for vertices and $\Gamma_\alpha \rightarrow \Gamma_{\alpha'} \leftrightarrow V_\alpha \rightarrow V_{\alpha'}$ for directed edges.
\end{Remark}

\begin{Example}\label{E:comp}
Given the hypercube of states from \autoref{E:hypercube}, the corresponding complex can also be visualized as follows:
\begin{equation*}
\begin{tikzcd}
& V\{s\}
\arrow[dr, "\Delta_V"] & \\
V \otimes V
\arrow[ur, "\mu_V"] \arrow[dr, "\mu_V"']
& \oplus & V \otimes V \{2s\} \\
& V\{s\}
\arrow[ur, "\Delta_V"']& \\
i = 0 & i =1 & i=2
\end{tikzcd}
\end{equation*}
where the interpretation of directed edges as $R$-linear maps is discussed below.
\end{Example}

\begin{Remark}\label{R:symm}
In forming our complex, the symmetry of the hypercube of states is destroyed; there is a preferred direction that runs from the vertex with $|\alpha| = 0$ to the vertex with $|\alpha| =\ell$. Rotating about the preferred axis gives a residual symmetry for states with fixed $|\alpha|$, which is preserved in the complex up to reordering of the direct sum.
\end{Remark}

We now define the differential
\[
\partial^i \colon C^{i,*}(\Gamma_M) \longrightarrow C^{i+1,*}(\Gamma_M).
\]
Fix a directed edge in the cube of states, say from $\Gamma_\alpha$ to
$\Gamma_{\alpha'}$, where $\alpha'$ is obtained from $\alpha$ by changing one
$0$-smoothing into a $1$-smoothing. Away from this smoothing, the two states
are identical. At the smoothing itself, exactly one of the following three
things happens:
\begin{gather*}
\begin{array}{c@{\qquad\qquad}c@{\qquad\qquad}c}
\text{merge} & \text{split} & \text{funny edge} \\[0.5em]
\begin{tikzpicture}[anchorbase,scale=0.55]
\draw[usual] (-0.8,0) circle (0.45);
\draw[usual] (0.8,0) circle (0.45);
\end{tikzpicture}
\xrightarrow{\;\mu_V\;}
\begin{tikzpicture}[anchorbase,scale=0.55]
\draw[usual] (0,0) circle (0.45);
\end{tikzpicture}
&
\begin{tikzpicture}[anchorbase,scale=0.55]
\draw[usual] (0,0) circle (0.45);
\end{tikzpicture}
\xrightarrow{\;\Delta_V\;}
\begin{tikzpicture}[anchorbase,scale=0.55]
\draw[usual] (-0.8,0) circle (0.45);
\draw[usual] (0.8,0) circle (0.45);
\end{tikzpicture}
&
\begin{tikzpicture}[anchorbase,scale=0.55]
\draw[usual] (0,0) circle (0.45);
\end{tikzpicture}
\xrightarrow{\;m_V\;}
\begin{tikzpicture}[anchorbase,scale=0.55]
\draw[usual,smooth,samples=80,domain=0:360,variable=\t]
plot ({0.75*sin(\t)},{0.45*sin(2*\t)});
\end{tikzpicture}
\end{array}
.
\end{gather*}
Here the pictures only show the components affected by the chosen edge of the
cube; all other components are unchanged and contribute identity tensor factors.
The first two cases are the familiar ones from ordinary link homology: two
circles merge, or one circle splits into two. The third case is the new feature
in the present setting: one circle remains one circle, but acquires a
self-crossing; this edge is represented algebraically by the M\"obius map.

\begin{Remark}
The reader familiar with virtual link homology might recognize the funny edge, which always causes trouble, see \cite{Man-vKh} for an early reference for this.
\end{Remark}

Using the correspondence from \autoref{R:correscomp}, we therefore assign to the
directed edge $\Gamma_\alpha \to \Gamma_{\alpha'}$ the $R$-linear map
\[
\partial_{\alpha\alpha'} \colon V_\alpha \longrightarrow V_{\alpha'}
\]
given by
\[
\partial_{\alpha\alpha'}=
\begin{cases}
\mu_V \otimes \id, & \text{if two circles merge},\\
\Delta_V \otimes \id, & \text{if one circle splits into two},\\
m_V \otimes \id, & \text{if one circle crosses itself},
\end{cases}
\]
where $\mu_V=\mathcal{F}(\mu)$, $\Delta_V=\mathcal{F}(\Delta)$, and
$m_V=\mathcal{F}(m)$, and where the identity factors act on the unchanged
circle components.

\begin{Remark}\label{R:tensororder}
There may be some concern over the ordering of the tensor product of $R$-linear maps along directed edges of the complex. We note that any such choice ultimately corresponds to an ordering of the circles in the hypercube of states; the result is identical up to an overall application of a tensor product swap map.
\end{Remark}

We have the following crucial proposition:

\begin{Proposition}\label{P:commute}
For any perfect matching graph $\Gamma_M$, the complex $C^{*, *}(\Gamma_M)$ together with the maps $\partial_{\alpha \alpha'}$ form a commutative diagram.
\end{Proposition}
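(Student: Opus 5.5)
The proof reduces to checking commutativity of each square face of the hypercube. Every face is determined by a state $\Gamma_\alpha$ and two distinct perfect matching edges $e_i,e_j$ with $\alpha_i=\alpha_j=0$. It has two composites $V_\alpha\to V_{\alpha+e_i}\to V_{\alpha+e_i+e_j}$ and $V_\alpha\to V_{\alpha+e_j}\to V_{\alpha+e_i+e_j}$, and we must show they agree. Since the maps act as identity on circles not touched by either smoothing, it suffices to look at the at most two circles of $\Gamma_\alpha$ meeting the two smoothing sites. By \autoref{R:tensororder} the tensor ordering only contributes a global swap, which is natural with respect to $\mu_V,\Delta_V,m_V$ by \autoref{hol2}, \autoref{hol4} and \autoref{mobs}, so orderings may be fixed freely.

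\textbf{Case 1: the two sites lie on different circles.} Then the two changes are disjoint in the tensor product and act on different factors, so the square commutes by the interchange law $(f\otimes \id)(\id\otimes g)=(\id\otimes g)(f\otimes\id)$. This covers every combination of merge, split and funny edge on disjoint supports.

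\textbf{Case 2: both sites lie on circles that interact.} Here there are at most two circles initially, and I would enumerate the possible local configurations by the number of circles at the four corners $(k_{00},k_{10},k_{01},k_{11})$, with each $k\in\{1,2,3\}$ and adjacent corners differing by at most one. The cases are:
\begin{enumerate}
\item $(2,1,1,2)$ or variants with merge then split, which commute by the Frobenius relation \autoref{frob}.
\item $(3,2,2,1)$ and $(1,2,2,3)$, which commute by (co)associativity and (co)commutativity \autoref{endrel}.
\item Squares containing funny edges, such as $(2,1,1,1)$ where a merge is followed by $m$. These reduce to $m\circ\mu=\mu\circ(m\otimes 1)$, and dually to $\Delta\circ m=(m\otimes 1)\circ\Delta$. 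The latter follows from \autoref{mobmu} together with the Frobenius structure: apply pivotality, or equivalently use the fact that $m$ is a $V$-module map and $\Delta$ is $V$-linear.
\item $(1,1,1,1)$ with two funny edges, which commutes trivially as $m\circ m=m\circ m$.
\item $(1,1,1,2)$-type squares, where one route is $\Delta\circ m$ and the other is $(1\otimes m)\circ\Delta$ (or similar). These again use module-map compatibility.
\item The genuinely new square $(1,2,2,1)$ versus $(1,1,1,1)$, where one path is split then merge, giving $\mu\circ\Delta=h$, while the other is two funny edges, giving $m^2$. This is exactly where \autoref{specrel} is required.
\end{enumerate}

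For each abstract configuration, I would realize it by an explicit planar picture of one circle (or two) with two chords, and read off which of merge, split or funny occurs along each path. For a single circle with two chords the relevant dichotomy is whether the chords are \emph{linked} or \emph{unlinked} along the circle, together with whether each resolved chord is orientation-consistent (split) or twisted (funny). This yields a short finite list.

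The main obstacle is making this enumeration complete and correctly matching each geometric square to the algebraic identity. In particular:
\begin{enumerate}
\item One must verify that the only place a split and a merge are paired against two funny edges is the $(1,2,2,1)$/$(1,1,1,1)$ face, and that it indeed forces $\mu\circ\Delta=m^2$ and not some other composite.
\item One must check that mixed faces never produce an identity beyond those in \autoref{mobmu} and \autoref{specrel}.
\end{enumerate}
To do this I would work with the Gauss-code description of a circle with two chords, classifying the cases by the interleaving pattern and by the twisting of each chord.
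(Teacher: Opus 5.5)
Your proposal is correct and follows essentially the same route as the paper. You reduce to square faces and dispatch disjoint supports by the interchange law. You then handle interacting faces by (co)associativity, the Frobenius relation, the M\"obius compatibility \autoref{mobmu} together with its consequence $\Delta\circ m=(m\otimes 1)\circ\Delta$, and pseudo-invertibility \autoref{specrel} for the face whose two paths are $m_V^2$ and $\mu_V\circ\Delta_V$. Your chord-diagram enumeration simply makes explicit the completeness that the paper covers with ``analogous relations''.

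Some of your bookkeeping is off, though it does not affect the argument. Case 2 can start with three circles, as your own $(3,2,2,1)$ shows. Writing $(k_{00},k_{10},k_{01},k_{11})$, the face that needs \autoref{mobmu} is $(2,1,2,1)$, and the face that needs $\Delta\circ m=(m\otimes 1)\circ\Delta$ is $(1,1,2,2)$. The faces $(2,1,1,1)$ and $(1,1,1,2)$ that you cite for these identities actually have identical composites on both paths.
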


\begin{proof}
We first note that if the square faces of our complex commute, then clearly the whole complex commutes by construction. For each face, consider opposing vertices such that the directed edges move away from one vertex in two directions and towards the other vertex from two directions e.g. $V \otimes V$ and $V \otimes V\{2s\}$ form opposing vertices in \autoref{E:comp}. 

Given a face, we begin by noting that if one path between opposing vertices contains a composition of maps with the $\mu_V$, $\Delta_V$, or $m_V$ operating on different copies of $V$ in the tensor product, it is relatively straightforward to see that the other path must contain the same composition of maps with the order swapped; using spine diagrams, this is essentially
\begin{gather*}
\begin{tikzpicture}[anchorbase,scale=0.22,tinynodes]
\draw[spinach!35,fill=spinach!35] (-5.5,2) rectangle (-2,0.5);
\draw[blue!35,fill=blue!35] (-0.5,3) rectangle (3,4.5);
\draw[usual] (-5,2) to (-5,3.75) to (-5,5);
\draw[usual] (-2.5,2) to (-2.5,5);
\draw[usual] (0,0) to (0,1.25)to (0,3);
\draw[usual] (2.5,0) to (2.5,3);
\draw[usual] (-5,0) to (-5,0.25) to (-5,0.5);
\draw[usual] (-2.5,0) to (-2.5,0.5);
\draw[usual] (2.5,4.5) to (2.5,5);
\draw[usual] (0,4.5) to (0,4.75) to (0,5);
\end{tikzpicture}
=
\begin{tikzpicture}[anchorbase,scale=0.22,tinynodes]
\draw[blue!35,fill=blue!35] (5.5,2) rectangle (2,0.5);
\draw[spinach!35,fill=spinach!35] (0.5,3) rectangle (-3,4.5);
\draw[usual] (2.5,2) to (2.5,3.75) to (2.5,5);
\draw[usual] (5,2) to (5,5);
\draw[usual] (0,0) to (0,3);
\draw[usual] (-2.5,0) to (-2.5,1.25) to (-2.5,3);
\draw[usual] (2.5,0) to (2.5,0.25) to (2.5,0.5);
\draw[usual] (5,0) to (5,0.5);
\draw[usual] (-2.5,4.5) to (-2.5,4.75) to (-2.5,5);
\draw[usual] (0,4.5) to (0,5);
\end{tikzpicture}
.
\end{gather*}
By the functorial properties of the tensor product, the face must therefore commute.

It remains to consider the case where both paths between two opposing vertices
in a square face involve maps acting on the same circle components. We suppress
all unaffected components, which only contribute identity tensor factors, and everything that is not relevant for the discussion. By
construction, the tensor powers at the two opposing vertices can differ by
$0$, $1$, or $2$.

If the tensor powers differ by $2$, then both paths are built only from merges,
or dually only from splits. A typical merge--merge face is
\begin{gather*}
\begin{tikzcd}[column sep=large, row sep=large,ampersand replacement=\&]
\& V\otimes V
\arrow[dr, "\mu_V"] \& \\
V\otimes V\otimes V
\arrow[ur, "\mu_V\otimes \mathrm{id}"]
\arrow[dr, "\mathrm{id}\otimes \mu_V"']
\& \& V \\
\& V\otimes V
\arrow[ur, "\mu_V"'] \& 
\end{tikzcd}
\leftrightsquigarrow
\begin{tikzpicture}[anchorbase,scale=1]
\draw[usual] (0,0) to (1,1);
\draw[usual] (1,0) to (0.5,0.5);
\draw[usual] (2,0) to (1,1);
\draw[usual] (1,1) to (1,1.5);
\end{tikzpicture}
=
\begin{tikzpicture}[anchorbase,scale=1]
\draw[usual] (0,0) to (1,1);
\draw[usual] (1,0) to (1.5,0.5);
\draw[usual] (2,0) to (1,1);
\draw[usual] (1,1) to (1,1.5);
\end{tikzpicture}
,
\end{gather*}
which commutes by associativity. The dual split--split face
commutes by coassociativity.

If the tensor powers differ by $1$, then one of the two local changes is a
merge or split, while the other is a M\"obius change. For instance, a typical
merge--M\"obius face is
\begin{gather*}
\begin{tikzcd}[column sep=large, row sep=large,ampersand replacement=\&]
\& V
\arrow[dr, "m_V"] \& \\
V\otimes V
\arrow[ur, "\mu_V"]
\arrow[dr, "m_V\otimes \mathrm{id}"']
\& \& V \\
\& V\otimes V
\arrow[ur, "\mu_V"'] \& 
\end{tikzcd}
\leftrightsquigarrow
\begin{tikzpicture}[anchorbase]
\draw[usual,mob=0.25] (0,0) to (0.5,0.5) to (1,0);
\draw[usual] (0.5,0.5) to (0.5,1);
\end{tikzpicture}
=
\begin{tikzpicture}[anchorbase]
\draw[usual] (0,0) to (0.5,0.5) to (1,0);
\draw[usual,mob=0.5] (0.5,0.5) to (0.5,1);
\end{tikzpicture}
,
\end{gather*}
and this commutes by the indicated relation.
The other faces of this type commute by the analogous relations.

Finally, suppose that the tensor powers at the opposing vertices are equal. These are always the most exciting faces. A prototypical face without $m_V$ is
\begin{gather*}
\begin{tikzcd}[column sep=large, row sep=large,ampersand replacement=\&]
\& V
\arrow[dr, "\Delta_V"] \& \\
V\otimes V
\arrow[ur, "\mu_V"]
\arrow[dr, "\Delta_V\otimes \mathrm{id}"']
\&  \& V\otimes V \\
\& V\otimes V \otimes V
\arrow[ur, "\mathrm{id}\otimes\mu_V"'] \& 
\end{tikzcd}
\leftrightsquigarrow
\begin{tikzpicture}[anchorbase]
\draw[usual] (0,0) to (0.5,0.5) to (1,0);
\draw[usual] (0.5,0.5) to (0.5,1);
\draw[usual] (0,1.5) to (0.5,1) to (1,1.5);
\end{tikzpicture}
=
\begin{tikzpicture}[anchorbase]
\draw[usual] (0,0) to (0,1.5);
\draw[usual] (1,0) to (1,1.5);
\draw[usual] (0,0.5) to (1,1);
\end{tikzpicture}
,
\end{gather*}
which commutes by the displayed Frobenius H=I relation.
The only nontrivial new face is
\begin{gather*}
\begin{tikzcd}[column sep=large, row sep=large,ampersand replacement=\&]
\& V
\arrow[dr, "m_V"] \& \\
V
\arrow[ur, "m_V"]
\arrow[dr, "\Delta_V"']
\&  \& V \\
\& V\otimes V
\arrow[ur, "\mu_V"'] \& 
\end{tikzcd}
\leftrightsquigarrow
\begin{tikzpicture}[baseline ={([yshift=-.5ex]current bounding box.center)}, draw, thick,scale=0.25]
\draw (0,0) circle (2cm);
\draw (-2,0) arc (180:360:2 and 0.6);
\draw[dashed, thick] (2,0) arc (0:180:2 and 0.6);
\node at (-0.66,0.8){$\lightning$};
\node at (0.66,0.8){$\lightning$};
\end{tikzpicture}
=
\begin{tikzpicture}[scale=0.25, draw, thick, baseline ={([yshift=-.5ex]current bounding box.center)}]
\draw (0,0) ellipse (3cm and 2cm);
\draw (-1.5, 0.2) arc (180:360:1.5 and 0.6);
\draw (1,-0.2) arc (0:180:1 and 0.6);
\end{tikzpicture}
.
\end{gather*}
The two paths around this face are $m_V^2$ and $\mu_V\circ\Delta_V$, so this
face commutes exactly by pseudo-invertibility \autoref{specrel}, as displayed.
\end{proof}

\begin{Example}
The following example shows that we actually do require the pseudo-invertibility condition \autoref{specrel} in \autoref{P:commute}. Consider the perfect matching graph:
\begin{gather*}
\begin{tikzpicture}[anchorbase,yscale=-1]
\draw[usual] (0,0) to (0.5,0.5);
\draw[usual] (0.5,0.5) to (1,0);
\draw[match] (0.5,0.5) to (0.5,1.25);
\draw[usual] (0,1.75) to (0.5,1.25);
\draw[usual] (1,1.75) to (0.5, 1.25);
\draw[match] (0 ,0) to (1,0);
\draw[match] (0,1.75) to (1,1.75);
\draw[usual] (0,0) to[out =180, in = 180] (0, 1.75);
\draw[usual] (1,0) to[out =0, in = 0] (1, 1.75);
\node at (0.5,0.5)[circle,fill,inner sep=1.55pt]{};
\node at (0.5,1.25)[circle,fill,inner sep=1.55pt]{};
\node at (0,0)[circle,fill,inner sep=1.55pt]{};
\node at (1,0)[circle,fill,inner sep=1.55pt]{};
\node at (0,1.75)[circle,fill,inner sep=1.55pt]{};
\node at (1,1.75)[circle,fill,inner sep=1.55pt]{};
\end{tikzpicture},
\end{gather*}
which indeed contains the following face in its corresponding complex:
\begin{equation*}
\begin{tikzcd}
& V\{2s\}
\arrow[dr, "m_V"] & \\
V\{s\}
\arrow[ur, "m_V"] \arrow[dr, "\Delta_V"']
& \oplus & V \{3s\} \\
& V\otimes V\{2s\}
\arrow[ur, "\mu_V"']& \\
i = 1 & i =2 & i=3
\end{tikzcd}
,
\end{equation*}
as one can easily check.
\end{Example}

Let us now define the differential $\partial^i$. For $v \in V_\alpha \subset C^{i, *}(\Gamma_M)$, we define
\begin{gather}\label{diffdef}
\partial^i(v) = \sum_{\alpha'} (-1)^{f_{\alpha \alpha'}} \partial_{\alpha \alpha'}(v),
\end{gather}
where the sum is over all $\alpha'$ such that there is a directed edge $\Gamma_\alpha \rightarrow \Gamma_{\alpha'}$ and $f_{\alpha \alpha'}$ counts the number of 1s to the left of the component where $\alpha$ and $\alpha'$ differ. Clearly, $\partial^i(v) \in C^{i+1, *}(\Gamma_M)$.

\begin{Theorem}\label{T:chaincomp}
$(C^{i, *}(\Gamma_M), \partial^i)$ is a chain complex.
\end{Theorem}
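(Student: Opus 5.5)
We must show $\partial^{i+1}\circ\partial^i=0$; well-definedness of each $\partial^i$ as an $R$-linear map $C^{i,*}(\Gamma_M)\to C^{i+1,*}(\Gamma_M)$ is immediate from \autoref{diffdef}. The plan is the standard Khovanov-type argument: reduce to square faces of the hypercube, use \autoref{P:commute} for commutativity of each face, and check that the sign assignment $(-1)^{f_{\alpha\alpha'}}$ makes every face anticommute.

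Fix $v\in V_\alpha$ with $|\alpha|=i$. Expanding,
\[
\partial^{i+1}\partial^i(v)=\sum_{\alpha''}\ \sum_{\alpha'}(-1)^{f_{\alpha\alpha'}+f_{\alpha'\alpha''}}\,\partial_{\alpha'\alpha''}\partial_{\alpha\alpha'}(v),
\]
where $\alpha''$ runs over states with $|\alpha''|=i+2$ obtained from $\alpha$ by switching two distinct entries $j<k$ from $0$ to $1$. For each such $\alpha''$ there are exactly two intermediate states: $\beta$, obtained by switching $j$ first, and $\gamma$, obtained by switching $k$ first. These four states form a square face of the hypercube, so it suffices to show that the two contributions cancel for each face.

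By \autoref{P:commute}, $\partial_{\beta\alpha''}\circ\partial_{\alpha\beta}=\partial_{\gamma\alpha''}\circ\partial_{\alpha\gamma}$. The faces relevant here are those of disjoint-support, merge--merge, merge--M\"obius, H=I type, and crucially the $m_V^2=\mu_V\circ\Delta_V$ face, which uses pseudo-invertibility \autoref{specrel}. The degree shifts $\{s|\alpha|\}$ play no role in this equality, since the linear maps are the same. By \autoref{R:tensororder}, the identification of tensor factors along either path differs only by a fixed swap, and this is compatible on both paths because $\mathcal{F}$ is symmetric monoidal (relations \autoref{hol2}, \autoref{hol4}, \autoref{mobs}).

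The signs are a direct count. Along $\alpha\to\beta\to\alpha''$, $f_{\alpha\beta}$ counts the $1$s of $\alpha$ left of position $j$; call this number $a$. Then $f_{\beta\alpha''}$ counts the $1$s of $\beta$ left of $k$, which equals $a+b+1$, where $b$ is the number of $1$s of $\alpha$ strictly between $j$ and $k$. The total exponent is therefore $2a+b+1$. Along $\alpha\to\gamma\to\alpha''$, $f_{\alpha\gamma}=a+b$, and $f_{\gamma\alpha''}=a$ because the new $1$ at $k$ lies to the right of $j$. The total exponent is $2a+b$. The two signs are opposite, so the contributions cancel and $\partial^{i+1}\partial^i=0$.

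The only substantive obstacle is the commutativity of the faces, and \autoref{P:commute} already supplies it. The remaining point to double-check is that the classification of faces in that proof is exhaustive. Two smoothings acting on the same components can change the circle count by $\pm2$, $\pm1$, or $0$, and every such case was covered. This includes both orders of a M\"obius edge combined with a merge or split, which follow from \autoref{mobmu} and its dual obtained via the Frobenius pairing. One should also check the split--merge face in both orientations, which follows from \autoref{frob}.
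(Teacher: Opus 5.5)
Your proposal is correct and follows the same route as the paper: reduce $\partial^{i+1}\circ\partial^i=0$ to the square faces of the hypercube, invoke \autoref{P:commute} for commutativity of each face (including the $m_V^2=\mu_V\circ\Delta_V$ face), and note that the signs $(-1)^{f_{\alpha\alpha'}}$ turn commuting faces into anticommuting ones. Your count of the exponents $2a+b+1$ versus $2a+b$ spells out the sign verification that the paper's proof only asserts.
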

\begin{proof}
We already know that $\partial^i: C^{i, *}(\Gamma_M) \rightarrow C^{i+1, *}(\Gamma_M)$ is a $R$-linear map between $R$-modules, and so we only need to show that $\partial^{i+1} \circ \partial^i = 0$. But we know from \autoref{P:commute} that the faces of the complex commute, and so the signs $(-1)^{f_{\alpha \alpha'}}$ chosen in \autoref{diffdef} are such that the faces now anticommute. The result then follows.
\end{proof}

Furthermore, if we also require that the maps $\mu_V$, $\Delta_V$, and $m_V$ preserve the $\mathcal{G}$-grading up to a constant shift $s$, then $\partial$ also does the same, which is then compensated by the shift $s$ in $\mathcal{G}$-grading when moving between $R$-modules $V_\alpha$ and $V_{\alpha'}$ with $|\alpha'|=|\alpha|+1$ in the complex; in other words, $\partial$ has bigrading $(1, 0)$.

We can now define the homology of our complex as follows.

\begin{Definition}\label{D:hom}
Let $G$ be a trivalent graph with perfect matching $M$, and let $\Gamma_M$ be a perfect matching graph with underlying graph $G$. The \emph{Möbius homology} of $\Gamma_M$ is 
\begin{gather*}
MH^{i, g}(\Gamma_M ; R)= \frac{\text{ker}\, \partial: C^{i, g}(\Gamma_M) \rightarrow C^{i+1, g}(\Gamma_M)}{\text{im}\, \partial: C^{i-1, g}(\Gamma_M) \rightarrow C^{i, g}(\Gamma_M)},
\end{gather*}
which is a $\mathcal{G}$-graded $R$-module.
\end{Definition}

The additional (homological) grading variable is denoted by $t$.

\begin{Remark} As in the case of the bracket (also see \autoref{brackforribb}), we can define the Möbius homology for any ribbon graph by using the canonical perfect matching induced from the blowup and \autoref{L:blowupiso}.
\end{Remark}

Finally, we have the following theorem:

\begin{Theorem}\label{T:maintheorem}
The graded (Hilbert-)Poincaré polynomial
\begin{gather}\label{poin}
Mh(\Gamma_M) = \sum_{i} t^i  q\text{dim}\big(MH^{i,*}(\Gamma_M; R)\big)
\end{gather}
associated with the Möbius homology is a ribbon graph invariant. Furthermore, the graded Euler characteristic of the underlying chain complex is the bracket with $A = 1$, $B = -q_1^{s_1}q_2^{s_2} \cdots q_m^{s_m} $, and $C = q\text{dim}(V)$.
\end{Theorem}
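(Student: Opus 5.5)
The proof splits into two independent parts: invariance of $Mh(\Gamma_M)$ under equivalence of perfect matching graphs, and the computation of the graded Euler characteristic.

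\emph{Invariance.} I would show that equivalent perfect matching graphs give isomorphic complexes, not merely quasi-isomorphic ones, since this construction has no Reidemeister-type moves that change the number of matching edges. By the Lemma that the hypercube of states is a ribbon graph invariant, a sequence of ribbon moves \autoref{eq:plane}, \autoref{eq:reide}, \autoref{ribbonmove4} from $\Gamma_M$ to $\Gamma'_{M'}$ induces a bijection $\alpha\mapsto\alpha$ of states. Each state $\Gamma_\alpha$ goes to a state $\Gamma'_\alpha$ with the same number of circles, via a bijection of circles. This bijection respects which local change occurs along each cube edge (merge, split, or funny edge), because the $0$- and $1$-smoothings are preserved separately. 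So the edge maps $\partial_{\alpha\alpha'}$ agree up to permutations of tensor factors, which is the issue in \autoref{R:tensororder}. The swap maps are natural isomorphisms of the symmetric monoidal structure, and they commute with $\mu_V$, $\Delta_V$, $m_V$ by \autoref{endrel}, \autoref{mobs}, and cocommutativity. Hence the permutations assemble into a chain isomorphism.

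The main subtlety I expect is the signs. The sign $(-1)^{f_{\alpha\alpha'}}$ depends on an ordering of the matching edges. If the matching edges are permuted, I would compare the two sign assignments as usual: any two valid sign assignments on the cube (with all faces anticommuting) differ by a coboundary. So rescaling each $V_\alpha$ by $\pm1$ gives an isomorphism of complexes. The grading shifts $\{s|\alpha|\}$ depend only on $|\alpha|$, so the isomorphism preserves the $\mathcal{G}$-grading and the homological grading. Isomorphic bigraded complexes have isomorphic homology, so $Mh$ is invariant.

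\emph{Euler characteristic.} Use that $\partial$ has bidegree $(1,0)$, as noted after \autoref{T:chaincomp}. Over $R$, with $V$ free, the graded Euler characteristic of the complex equals that of its homology when the homology is free. In general I would define the Euler characteristic at chain level, which is what the statement asserts. Then
\[
\chi=\sum_i(-1)^i q\mathrm{dim}\, C^{i,*}=\sum_{\alpha}(-1)^{|\alpha|}\,(q_1^{s_1}\cdots q_m^{s_m})^{|\alpha|}\,q\mathrm{dim}(V)^{k_\alpha},
\]
using that $q\mathrm{dim}$ is multiplicative on tensor products and that the shift $\{s|\alpha|\}$ multiplies by $(q_1^{s_1}\cdots q_m^{s_m})^{|\alpha|}$. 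This matches the state-sum lemma $\langle\Gamma_M\rangle=\sum_\alpha A^{\ell-|\alpha|}B^{|\alpha|}C^{k_\alpha}$ with $A=1$, $B=-q_1^{s_1}\cdots q_m^{s_m}$, $C=q\mathrm{dim}(V)$, which completes the proof.
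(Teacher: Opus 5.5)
Your proposal is correct and follows essentially the same route as the paper: invariance of the hypercube of states, the observation that reordering summands, permuting tensor factors and relabelling matching edges only change the complex up to a grading-preserving isomorphism, and a chain-level count giving $\sum_\alpha(-q_1^{s_1}\cdots q_m^{s_m})^{|\alpha|}q\mathrm{dim}(V)^{k_\alpha}$. Your coboundary argument for the signs, which rescales each $V_\alpha$ by $(-1)^{\epsilon(\alpha)}$, is a more precise version of the paper's brief remark that relabelling changes the differential only by a ``relative minus sign''.
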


\begin{proof}
Firstly, we know that the hypercube of states is a ribbon graph invariant. Furthermore, the preferred direction from the all $0$-smoothing state $|\alpha|=0$ to the all $1$-smoothing state $|\alpha|=\ell$ is independent of the choice of labeling for the perfect matching edges. By \autoref{R:symm}, the residual symmetry, when translated to the complex $C^{*, *}(\Gamma_M)$, amounts to a reordering of the direct sum of $R$-modules $V_\alpha$ with fixed $|\alpha|$. Furthermore, by \autoref{R:tensororder}, we also know that the chain groups and maps are defined up to an overall application of a tensor product swap map. Lastly, any relabeling of the perfect matching edges may also affect the definition of $\partial^i$ in \autoref{diffdef} by a relative minus sign. In any case, when taking the graded dimensions of the homology groups, the result is independent of any such choices.

To prove the second part of the statement, first note that $q\text{dim}(V\{s\}) = q_1^{s_1}q_2^{s_2} \cdots q_m^{s_m} \,q\text{dim}(V)$. Since all our $R$-modules are free of finite rank and $\partial$ preserves the $\mathcal{G}$-grading, the graded Euler characteristic of the chain complex $(C^{i, *}(\Gamma_M), \partial^i)$ is either given by the alternating sum of the graded dimensions of the $MH^{i, *}(\Gamma_M ; R)$ or of the $C^{i, *}(\Gamma_M)$ themselves. Using the latter, we obtain a sum of terms $(-q_1^{s_1}q_2^{s_2} \cdots q_m^{s_m})^{|\alpha|}\big(q\text{dim}(V)\big)^{k_\alpha}$, which is exactly the bracket with the given choice of variables.
\end{proof}

Later in \autoref{S:Exmp}, we will see examples demonstrating that the Möbius homology, through the associated graded Poincaré polynomial, is strictly more powerful than the bracket as a ribbon graph invariant.

\begin{Remark}
It would be interesting to study the Möbius homology using an asymptotic or big data approach, as, for example, in \cite{CoOsTu-growth,DlGuSa-data,KeLaTuVaZh-detection,TubbenhauerZhang-bigdata-quantum-invariants}. In particular, it would be interesting to have a weighting on the statement that the Möbius homology is strictly more powerful than the bracket.
\end{Remark}

\begin{Remark} 
One of the strongest features of link homologies is that they are not only invariants of links, but also behave functorially under link cobordisms; see, for example, \cite{BN-cobordisms,CMW-functoriality,Jac-cobordisms} or, quite generally, \cite{ETW-functoriality}. It is natural to ask for an analogous statement here: one would like suitable cobordisms between perfect matching ribbon graphs to induce maps on the homologies constructed in this paper. We do not pursue this in the present work, but the nonorientable cobordism model suggests that such a functorial extension should exist. 
\end{Remark}

\subsection{Geometric complex}\label{S:geo} 

We briefly mention another way to obtain a ribbon graph invariant using the hypercube of states and nonorientable cobordisms, which is more geometric in nature. The result is similarly a chain complex with an associated homology, but we are able to relax the somewhat artificial condition \autoref{specrel}; the trade-off is that the chain groups have a somewhat more complicated structure with less flexibility.

We begin by considering the $\K$-linearization of $\mcob{}$, for some fixed field $\K$, with closed surfaces evaluated according to 
\begin{gather}
\begin{tikzpicture}[baseline ={([yshift=-.5ex]current bounding box.center)}, draw, thick,scale=0.25]
\draw (0,0) circle (2cm);
\draw (-2,0) arc (180:360:2 and 0.6);
\draw[dashed, thick] (2,0) arc (0:180:2 and 0.6);
\end{tikzpicture}
\quad = \quad
\paraa_0, \hspace{0.75cm} \begin{tikzpicture}[scale=0.25, draw, thick, baseline ={([yshift=-.5ex]current bounding box.center)}]
\draw (0,0) ellipse (3cm and 2cm);
\draw (-1.5, 0.2) arc (180:360:1.5 and 0.6);
\draw (1,-0.2) arc (0:180:1 and 0.6);
\end{tikzpicture}
\quad = \quad
\paraa_1, \hspace{0.75cm}  \begin{tikzpicture}[scale=0.25, baseline ={([yshift=-.5ex]current bounding box.center)}, draw, thick]
\draw (0,0) ellipse (4.8cm and 2cm);
\draw (-3.5, 0.2) arc (180:360:1.5 and 0.6);
\draw (-1,-0.2) arc (0:180:1 and 0.6);
\draw (0.5, 0.2) arc (180:360:1.5 and 0.6);
\draw (3,-0.2) arc (0:180:1 and 0.6);
\end{tikzpicture}
\quad = \quad
\paraa_2 \label{series1}, \hspace{0.75cm} \ldots\,, \\
\begin{tikzpicture}[baseline ={([yshift=-.5ex]current bounding box.center)}, draw, thick,scale=0.25]
\draw (0,0) circle (2cm);
\draw (-2,0) arc (180:360:2 and 0.6);
\draw[dashed, thick] (2,0) arc (0:180:2 and 0.6);
\node at (0,-1){$\lightning$};
\end{tikzpicture}
\quad = \quad
\parab_0, \hspace{0.75cm} \begin{tikzpicture}[scale=0.25, draw, thick, baseline ={([yshift=-.5ex]current bounding box.center)}]
\draw (0,0) ellipse (3cm and 2cm);
\draw (-1.5, 0.2) arc (180:360:1.5 and 0.6);
\draw (1,-0.2) arc (0:180:1 and 0.6);
\node at (0,-1){$\lightning$};
\end{tikzpicture}
\quad = \quad
\parab_1, \hspace{0.75cm} \begin{tikzpicture}[scale=0.25, baseline ={([yshift=-.5ex]current bounding box.center)}, draw, thick]
\draw (0,0) ellipse (4.8cm and 2cm);
\draw (-3.5, 0.2) arc (180:360:1.5 and 0.6);
\draw (-1,-0.2) arc (0:180:1 and 0.6);
\draw (0.5, 0.2) arc (180:360:1.5 and 0.6);
\draw (3,-0.2) arc (0:180:1 and 0.6);
\node at (0,-1){$\lightning$};
\end{tikzpicture}
\quad = \quad
\parab_2, \hspace{0.75cm} \ldots\,, \label{series2} \\
\begin{tikzpicture}[baseline ={([yshift=-.5ex]current bounding box.center)}, draw, thick,scale=0.25]
\draw (0,0) circle (2cm);
\draw (-2,0) arc (180:360:2 and 0.6);
\draw[dashed, thick] (2,0) arc (0:180:2 and 0.6);
\node at (0,-1){$\lightning$};
\node at (0,1){$\lightning$};
\end{tikzpicture}
\quad = \quad
\parac_0, \hspace{0.75cm} \begin{tikzpicture}[scale=0.25, draw, thick, baseline ={([yshift=-.5ex]current bounding box.center)}]
\draw (0,0) ellipse (3cm and 2cm);
\draw (-1.5, 0.2) arc (180:360:1.5 and 0.6);
\draw (1,-0.2) arc (0:180:1 and 0.6);
\node at (0,-1){$\lightning$};
\node at (0,1.2){$\lightning$};
\end{tikzpicture}
\quad = \quad
\parac_1, \hspace{0.75cm} \begin{tikzpicture}[scale=0.25, baseline ={([yshift=-.5ex]current bounding box.center)}, draw, thick]
\draw (0,0) ellipse (4.8cm and 2cm);
\draw (-3.5, 0.2) arc (180:360:1.5 and 0.6);
\draw (-1,-0.2) arc (0:180:1 and 0.6);
\draw (0.5, 0.2) arc (180:360:1.5 and 0.6);
\draw (3,-0.2) arc (0:180:1 and 0.6);
\node at (0,-1){$\lightning$};
\node at (0,1){$\lightning$};
\end{tikzpicture}
\quad = \quad
\parac_2, \hspace{0.75cm} \ldots\, , \label{series3}
\end{gather}
and the empty cobordism $\emptyset$ is evaluated to $1\in\K$. The coefficients in \autoref{series1}--\autoref{series3} are obtained from the following generating functions:
\begin{gather*}
Z_{\paraa} = \frac{p_\paraa (T)}{q(T)} = \sum_{k \geq 0} \paraa_kT^k, \hspace{0.5cm} Z_{\parab} = \frac{p_\parab (T)}{q(T)} = \sum_{k \geq 0} \parab_kT^k, \hspace{0.5cm} Z_{\parac} = \frac{p_\parac (T)}{q(T)} = \sum_{k \geq 0} \parac_kT^k,
\end{gather*}
where $q(T) = 1 - a_1 T +a_2T^2+ \ldots + (-1)^Ma_MT^M \in \K(T)$ and furthermore, $p_\paraa(T), p_\parab(T), p_\parac(T)\in \K[T]$ satisfy
\begin{gather}
\deg\big(p_\parab(T)\big), \deg\big(p_\parac(T)\big) < K = \max(\deg\big(p_\paraa(T)\big)+1, M). \label{conds}
\end{gather}

Next, we make use of the \emph{universal construction} of \cite{BlHaMaVo-tqft-kauffman-bracket} with the given closed surface evaluations (the surfaces listed are the only closed surfaces by \autoref{mobrel}) to construct 
the category $\mucob{\paraa}{\parab}{\parac}$, introduced in \cite{CoTu-mobius}. From \cite[Proposition 3.11]{CoTu-mobius}, we know that $\mucob{\paraa}{\parab}{\parac}$ is symmetric and pivotal, and has a generator-relation presentation with generators \autoref{Eq:Gen} and \autoref{Eq:MGen}, and relations \autoref{startrel}--\autoref{endrel} and \autoref{mobmu}--\autoref{mobrel}, together with the \emph{handle relation}
\begin{gather*}
\sigma = h^K + \sum_{i=1}^M (-1)^ia_ih^{K-i}=0\leftrightsquigarrow
\begin{tikzpicture}[anchorbase]
\draw[usual,hol=0.5] (0,0) to (0,0.5)node[left]{$K$} to (0,1);
\end{tikzpicture}
-a_1\cdot 
\begin{tikzpicture}[anchorbase]
\draw[usual,hol=0.5] (0,0) to (0,0.5)node[left]{$(K-1)$} to (0,1);
\end{tikzpicture}
\pm\dots+(-1)^{\deg q}a_{\deg q}\cdot
\begin{tikzpicture}[anchorbase]
\draw[usual,hol=0.5] (0,0) to (0,0.5)node[left]{$(K-\deg q)$} to (0,1);
\end{tikzpicture}
=0.
\end{gather*}
(The number next to the dot denotes its multiplicity.)
In particular, the hom spaces of $\mucob{\paraa}{\parab}{\parac}$ are finite dimensional vector spaces whose elements contain a maximum of $K-1$ handles.
For more details on the construction of $\mucob{\paraa}{\parab}{\parac}$, see \cite{CoTu-mobius}.

\begin{Remark}
Here and in \autoref{S:IGrad}, we use a field instead of a commutative ring 
only because \cite{CoTu-mobius} did. This requirement can be relaxed if needed.
\end{Remark}

Let us now take $n \in \mathbb{N}_{>0}$. Suppose that we choose $p_{\paraa}(T)$ such that 
\begin{gather*}
\deg\big(p_\paraa(T)\big) =\begin{cases*}
n  & $n$ even,\\
n-1 & $n$ odd,
\end{cases*}
\end{gather*}
and that $q(T) = 1 - T$. The handle relation then reads $h^{n+1} = h^{n}$ for $n$ even, and $h^{n}=h^{n-1}$ for $n$ odd i.e. $K = n+1, n$ for $n$ even, odd, respectively. We then choose $p_\parab(T), p_\parac(T)$ such that \autoref{conds} is satisfied. We then have a family of categories $\mucob{\paraa}{\parab}{\parac} ^n$ for each $n$.

We now build a chain complex for each $n$. Given a perfect matching graph $\Gamma_M$ with trivalent underlying graph $G = (V, E, r)$ together with perfect matching $M = \{e_1, e_2, \ldots, e_\ell\} \subset E$, we again consider the hypercube of states with directed edges as before. To each state with $k_\alpha$ circles, we associate the finite-dimensional vector space $V_{\alpha,n} =\hom_{\mucob{\paraa}{\parab}{\parac} ^n}(0, k_\alpha)$. We then define a complex $C^{*, *}_n(\Gamma_M)$ as in \autoref{compdef} (we will come to the grading later); note that \autoref{R:correscomp} and \autoref{R:symm} also apply in this setting.

The differential $\partial^{i}_n: C^{i, *}_n(\Gamma_M) \rightarrow C^{i+1, *}_n(\Gamma_M)$ can also be constructed in the same manner as before: recall that there were three scenarios describing the change of state moving from one vertex to another: $i)$ two circles merge, $ii)$ one circle splits in two, or $iii)$ one circle crosses itself. We need to define linear maps representing these three scenarios. To begin, let
\begin{gather*}
\zeta_n =\begin{cases*}
h^{n/2}\circ m & $n$ even,\\
h^{(n-1)/2}\circ m & $n$ odd,
\end{cases*}
\end{gather*}
be cobordisms i.e. $\zeta_n \in \End_{\mucob{\paraa}{\parab}{\parac} ^n}(1)$ for each $n$. Given elements $X_n\in \hom_{\mucob{\paraa}{\parab}{\parac} ^n}(0, 1)$ and $Y_n\in \hom_{\mucob{\paraa}{\parab}{\parac} ^n}(0, 2)$, we define the following linear maps:
\begin{align*}
\mu_n: \hom_{\mucob{\paraa}{\parab}{\parac} ^n}(0, 2) \rightarrow \hom_{\mucob{\paraa}{\parab}{\parac} ^n}(0, 1), \hspace{0.5cm} Y_n &\mapsto  \zeta_n \circ \mu \circ Y_n,  \\
\Delta_n: \hom_{\mucob{\paraa}{\parab}{\parac} ^n}(0, 1) \rightarrow \hom_{\mucob{\paraa}{\parab}{\parac} ^n}(0, 2), \hspace{0.5cm} X_n &\mapsto \Delta \circ \zeta_n \circ X_n, 
\\
m_n: \hom_{\mucob{\paraa}{\parab}{\parac} ^n}(0, 1) \rightarrow \hom_{\mucob{\paraa}{\parab}{\parac} ^n}(0, 1), \hspace{0.5cm} X_n &\mapsto \zeta_n \circ X_n,
\end{align*}
and associate them with the directed edges corresponding to scenarios $i)$, $ii)$, and $iii)$ respectively, tensored by the appropriate number of identity maps (\autoref{R:tensororder} also applies here).

\begin{Example}\label{E:basicvec}
For $n=1$, $h_1 = 1_1$ i.e. there are no handles, and we have three basis vectors for $\hom_{\mucob{\paraa}{\parab}{\parac} ^1}(0, 1)$:
\begin{gather*} X^1_{1} :
\begin{tikzpicture}[baseline ={([yshift=-.5ex]current bounding box.center)},scale=0.25]
\pic[
tqft,
incoming boundary components=1,
outgoing boundary components=0,
every lower boundary component/.style={draw},
genus=0,
scale=1.5,
draw,
thick,
name=cup
]; 
\end{tikzpicture}, \hspace{1cm} X^2_{1} : \begin{tikzpicture}[baseline ={([yshift=-.5ex]current bounding box.center)},scale=0.25]
\pic[
tqft,
incoming boundary components=1,
outgoing boundary components=0,
every lower boundary component/.style={draw},
genus=0,
scale=1.5,
draw,
thick,
name=cup
]; 
\node at ([xshift=0pt, yshift=-10pt]current bounding box.center) {$\lightning$};
\end{tikzpicture},
\hspace{1cm} X^3_{1} : \begin{tikzpicture}[baseline ={([yshift=-.5ex]current bounding box.center)},scale=0.25]
\pic[
tqft,
incoming boundary components=1,
outgoing boundary components=0,
every lower boundary component/.style={draw},
genus=0,
scale=1.5,
draw,
thick,
name=cups
]; 
\node at ([xshift=-15pt, yshift=-10pt]current bounding box.center)  {$\lightning$};
\node at ([xshift=20pt, yshift=-10pt]current bounding box.center)  {$\lightning$};
\end{tikzpicture},
\end{gather*}
and we have 
\begin{gather*}
m_1\colon \hspace{0.5cm} X^1_{1} :\begin{tikzpicture}[baseline ={([yshift=-.5ex]current bounding box.center)},scale=0.25]
\pic[
tqft,
incoming boundary components=1,
outgoing boundary components=0,
every lower boundary component/.style={draw},
genus=0,
scale=1.5,
draw,
thick,
boundary separation=40pt,
name=pop
]; 
\end{tikzpicture} \quad \mapsto \quad \begin{tikzpicture}[tqft, baseline ={([yshift=-.5ex]current bounding box.center)},scale=0.25, cobordism/.style={draw},
every upper boundary component/.style={draw},
every lower boundary component/.style={draw}]
\pic[tqft/cylinder, thick, name=a,scale=0.75];
\pic[tqft/cup,draw, thick,scale=0.75, name=b, anchor = incoming boundary 1, at =(a-outgoing boundary 1)];
\node at ([xshift=0pt, yshift=10pt]current bounding box.center)  {$\lightning$};
\end{tikzpicture} \quad = \quad X^2_{1} :\begin{tikzpicture}[baseline ={([yshift=-.5ex]current bounding box.center)},scale=0.25]
\pic[
tqft,
incoming boundary components=1,
outgoing boundary components=0,
every lower boundary component/.style={draw},
genus=0,
scale=1.5,
draw,
thick,
name=cups
]; 
\node at ([xshift=0pt, yshift=-10pt]current bounding box.center)  {$\lightning$};
\end{tikzpicture},
\end{gather*}
and similarly for the other basis vectors and maps.
\end{Example}

Recall from \autoref{P:commute} that if we could get all the faces of our complex to commute, then the complex itself forms a commutative diagram; we would like to do the same here. Also recall from the corresponding proof that opposing vertices can have tensor powers differing by either $i)$ 0, $ii)$ 1, or $iii)$ 2. Instead of tensor powers, in the present case we have the positive integer $k_\alpha$ appearing in $\hom_{\mucob{\paraa}{\parab}{\parac} ^n}(0, k_\alpha)$, which gives the number of circles in the corresponding state $\Gamma_\alpha$ in the hypercube of states. 

Using identical reasoning as in the proof of \autoref{P:commute}, we only need to consider faces whose paths contain a composition of maps with $\mu_n$, $\Delta_n$, or $m_n$ operating on the same circles. Faces corresponding to scenario $iii)$ again must either trivially commute, or commute by associativity or coassociativity. Also, since handles and Möbius strip insertions move freely around cobordisms, faces corresponding to scenario $ii)$ also commute. For scenario $i)$, we again have that the only new nontrivial relation that must be satisfied is $m_n^2 = \mu_n \circ \Delta_n$,
which holds automatically: both sides contain two Möbius strip insertions, and the r.h.s. contains an extra handle with respect to the l.h.s. that is then eliminated by the handle relation. By the classification of surfaces, we therefore conclude that they represent the same cobordism.

If we then define a differential $\partial^{i}_n$ the same as in \autoref{diffdef}, we can construct a chain complex by the same reasoning as in the proof of \autoref{T:chaincomp}. Let us now define a grading by setting $\mathcal{G}= \Z/2\Z$. Given a positive integer $k$, the grading of an element $Z_n \in \hom_{\mucob{\paraa}{\parab}{\parac} ^n}(0, k)$ is given by $0(1)$ for an even(odd) number of Möbius strip insertions in total; note that the parity of Möbius strip insertions is respected by \autoref{mobrel}, and so the grading is well-defined.

\begin{Example}
For $n=1$ following \autoref{E:basicvec}, we have $\hom_{\mucob{\paraa}{\parab}{\parac} ^1}(0, 1) = W_0 \oplus W_1$ with $X^1_{1}, X^3_{1} \in W_0$ and $X^2_{1} \in W_1$.
\end{Example}

The point is that the maps $\mu_n$, $\Delta_n$, and $m_n$ all contain a single Möbius strip insertion; hence, the differential $\partial^{i}_n$ preserves the $\Z/2\Z$-grading up to a shift of size 1. By shifting the $\Z/2\Z$-grading of each of the vector spaces $V_{\alpha, n}$ by the appropriate amount for each fixed $|\alpha|$, we then have that $\partial_n$ has bigrading $(1, 0)$. Finally, we can define the homology as in \autoref{D:hom} and associated graded Poincaré polynomial, which is a ribbon graph invariant for the same reasons given in the proof of the first part of \autoref{T:maintheorem}. A more thorough analysis of the resulting invariant is left for future work. 

\subsection{An integer graded version}\label{S:IGrad}

The fact that the handle relation $h^{n+1} = h^n$ for $n$ even, and $h^n = h^{n-1}$ for $n$ odd, is monoid-like prevents us from obtaining a more interesting grading than $\mathcal{G}= \Z/2\Z$. As an alternative, let us return to working with $\mcob{}$ instead of $\mucob{\paraa}{\parab}{\parac}$. In place of $\hom_{\mucob{\paraa}{\parab}{\parac} ^n}(0, k_\alpha)$, define the vector space $V_\alpha'$ as $\K$-linear combinations of cobordisms in $\hom_{\mcob{}}(0, k_\alpha)$ whose connected components are all nonorientable. In place of $
\mu_n, \Delta_n$, and $m_n$, define the maps
\begin{align*}
\mu: \hom_{\mcob{}}(0, 2) \rightarrow \hom_{\mcob{}}(0, 1), \hspace{0.5cm} Y &\mapsto   \mu \circ Y,  \\
\Delta: \hom_{\mcob{}}(0, 1) \rightarrow \hom_{\mcob{}}(0, 2), \hspace{0.5cm} X &\mapsto \Delta \circ X, 
\\
m: \hom_{\mcob{}}(0, 1) \rightarrow \hom_{\mcob{}}(0, 1), \hspace{0.5cm} X &\mapsto m \circ X,
\end{align*}
extended $\K$-linearly. Given any element $X \in \hom_{\mcob{}}(0, 1)$ whose connected components are all nonorientable, we must have an element $X' \in \hom_{\mcob{}}(0, 1)$ with $X = m \circ X'$. Now notice that we have 
\begin{gather*}
(\mu \circ \Delta)(X) = \mu \circ \Delta \circ X = \mu \circ \Delta \circ m \circ X' = m^3 \circ X' = m^2 \circ X = m^2(X),
\end{gather*}
where we have used the relation \autoref{mobrel} in the third equality. Therefore, the problematic face commutes (the other face types commute trivially or using the relations in $\mcob{}$ as previously) and we can again build a chain complex as before. 

Furthermore, for the grading, set $\mathcal{G} =\Z$, and for each of the $\circ$-$\otimes$-generators of $\mcob{}$, assign the following integers $1_1, s:0$, $\eta, \epsilon: -1$, and $\mu, \Delta, m : 1$. Define the grading of a cobordism in $V_\alpha'$ by decomposing it into a $\circ$-$\otimes$-product of generators, and summing over all the integers assigned to each generator appearing in the decomposition. Since the $\mcob{}$ relations respect the given assignments (the reader is encouraged to check this), the grading is well-defined. Clearly, the differentials built from $\mu, \Delta, m$ preserve the grading up to a shift of size 1. 

Finally, we note that the $V_\alpha'$ are infinite dimensional as the genus of cobordisms may be unbounded. To produce a finite-dimensional space, we can form a subspace of each $V_\alpha'$ by restricting to cobordisms with a certain fixed genus or less. In addition, to ensure that the images of the maps comprising the differential lie in the relevant codomains, we must increase the maximum genus every second step in the chain to account for appearances of $h=\mu \circ \Delta$. We are then left with a choice of maximum genus $n \in \mathbb{N}$ for the first nonzero chain group. 

Altogether, after taking the homology and forming the associated graded Poincaré polynomial, we have another family of ribbon graph invariants, one for each $n \in \mathbb{N}$, though this invariant appears to be much more difficult to compute in practice. 


\section{Möbius Frobenius algebras}\label{S:Frob}


We assume some familiarity with \cite{Ko-tqfts} or similar references.

\subsection{Definition}

In analogy with the ordinary Frobenius algebra and TQFT correspondence, we now establish a bijective correspondence that allows us to work with more compact objects, namely Frobenius algebras with some extra structure, instead of MTQFTs. We begin with the following definition.

\begin{Definition}\label{D:Mobiusfrobalg} 
Let $\mathcal{C}$ be a braided monoidal category. A \emph{Möbius Frobenius algebra} in $\mathcal{C}$ is a 6-tuple $(A, \mu_A, \eta_A, \Delta_A, \epsilon_A, m_A)$, where $(A, \mu_A, \eta_A, \Delta_A, \epsilon_A)$ is a commutative Frobenius algebra in $\mathcal{C}$ and $m_A: A \rightarrow A$ is a morphism such that the diagrams
\begin{equation} \label{mobfrob1}
\begin{tikzcd}
A \otimes A  \arrow[d, "\mu_A"']
& A \otimes A \arrow[d,
"\mu_A" ] \arrow[l,
"m_A \otimes \text{id}_A"' ] \arrow[r,
"\text{id}_A \otimes m_A" ]& A\otimes A \arrow[d, "\mu_A"]\\
A 
&  A \arrow[l,
"m_A"] \arrow[r, "m_A"'] & A
\end{tikzcd}
\end{equation}
and
\begin{equation} \label{mobfrob2}
\begin{tikzcd}
A   \arrow[r,
"\Delta_A" ]
& A \otimes A \arrow[d,
"\mu_A" ] \\
A \arrow[r,
"m_A^3"'] \arrow[u, "m_A"] 
&  A 
\end{tikzcd}
\end{equation}
commute. 

Two Möbius Frobenius algebras $(A, \mu_A, \eta_A, \Delta_A, \epsilon_A, m_A)$ and $(A', \mu_{A'}, \eta_{A'}, \Delta_{A'}, \epsilon_{A'}, m_{A'})$ are said to be \emph{isomorphic} if there exists an isomorphism of commutative Frobenius algebras $f: A \rightarrow A'$ such that $m_{A'} \circ f = f \circ m_A$.
\end{Definition}

\begin{Remark}
The morphism $m_A: A \rightarrow A$ is not required to be a morphism of Frobenius algebras in the sense of \cite{Ko-tqfts}; in particular, it need not preserve the unit or counit. 
\end{Remark}

\begin{Example}
Take $n \in \mathbb{N}_{>0}$. Consider the subcategory of $\K\textbf{Vec}=\K\textbf{Mod}$ whose objects are given by $\hom_{\mucob{\paraa}{\parab}{\parac} ^n}(0, k)$ with $k$ a positive integer (see \autoref{S:geo}). The morphisms in $\hom_{\mucob{\paraa}{\parab}{\parac} ^n}(0, k) \rightarrow \hom_{\mucob{\paraa}{\parab}{\parac} ^n}(0, k')$ are given by $\hom_{\mucob{\paraa}{\parab}{\parac} ^n}(k, k')$ using composition in the same manner e.g. \autoref{E:basicvec}; call the resulting category $\mathcal{C}$. 

Now define a monoidal product in $\mathcal{C}$ given on objects as $$\hom_{\mucob{\paraa}{\parab}{\parac} ^n}(0, k) \, \square \, \hom_{\mucob{\paraa}{\parab}{\parac} ^n}(0, k') = \hom_{\mucob{\paraa}{\parab}{\parac} ^n}(0, k+ k')$$ with unit object $\hom_{\mucob{\paraa}{\parab}{\parac} ^n}(0, 0) \cong \K$ and braiding given by appropriate insertions of $s$ from \autoref{Eq:Gen}. On morphisms, $\square$ is disjoint union of cobordisms. Given $V = \hom_{\mucob{\paraa}{\parab}{\parac} ^n}(0, 1)$, we have that $\mathcal{C}$ is generated by the Möbius Frobenius algebra $(V, \mu, \eta, \Delta, \epsilon, m)$.
\end{Example}

\subsection{Classification}

We have the following important theorem:

\begin{Theorem}\label{T:Mobiusfrobtqftbij} 
Isomorphism classes of MTQFTs are in bijective correspondence with isomorphism classes of Möbius Frobenius algebras in $R\textbf{Mod}$.
\end{Theorem}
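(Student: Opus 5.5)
The plan is to reduce to the classical correspondence between 2D TQFTs and commutative Frobenius algebras, as in \cite{Ko-tqfts}. We use the generator-relation presentation of $\mcob{}$, namely generators \autoref{Eq:Gen}, \autoref{Eq:MGen} and relations \autoref{startrel}--\autoref{mobrel}, which we take from \cite{CoTu-mobius}. Since $\mcob{}$ is the symmetric monoidal category freely generated by these data, a symmetric monoidal functor $\mathcal{F}\colon\mcob{}\to R\textbf{Mod}$ is determined up to monoidal natural isomorphism by three things. First, the object $A=\mathcal{F}(1)$. Second, the images of the generators. Third, the requirement that these images satisfy the relations. We send $s$ to the symmetry of $R\textbf{Mod}$, since $\mathcal{F}$ is symmetric.

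For the forward direction, given $\mathcal{F}$, set $\mu_A=\mathcal{F}(\mu)$, $\eta_A=\mathcal{F}(\eta)$, $\Delta_A=\mathcal{F}(\Delta)$, $\epsilon_A=\mathcal{F}(\epsilon)$ and $m_A=\mathcal{F}(m)$. By \autoref{rmk2cob}, restricting along the inclusion $\cob\to\mcob{}$ gives an ordinary TQFT, so $(A,\mu_A,\eta_A,\Delta_A,\epsilon_A)$ is a commutative Frobenius algebra in $R\textbf{Mod}$ by the classical theorem. Relation \autoref{mobmu} is exactly diagram \autoref{mobfrob1}. Relation \autoref{mobrel}, $m^3=\mu\circ\Delta\circ m$, is exactly diagram \autoref{mobfrob2}. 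Relation \autoref{mobs} holds automatically in $R\textbf{Mod}$ by naturality of the swap. Finally, $A$ is finitely generated free, as the definition of MTQFT requires.

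For the converse, given a Möbius Frobenius algebra, first assign generators to structure maps and $s$ to the swap. Next check each relation. Relations \autoref{startrel}--\autoref{endrel} hold by the classical theorem, since these are precisely the commutative Frobenius axioms together with the symmetric monoidal coherences. Relations \autoref{mobmu} and \autoref{mobrel} hold by the diagrams \autoref{mobfrob1} and \autoref{mobfrob2}, and \autoref{mobs} holds by naturality of the swap. The presentation then yields a symmetric monoidal functor.

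On morphisms, a monoidal natural isomorphism $\mathcal{F}\Rightarrow\mathcal{F}'$ is determined by its component $f$ at $1$, because the components at $k$ are $f^{\otimes k}$. Naturality with respect to the generators says precisely that $f$ is a Frobenius algebra isomorphism intertwining $m_A$ and $m_{A'}$. Conversely, any such $f$ gives a natural isomorphism, since naturality only needs checking on generators. The two constructions are mutually inverse on isomorphism classes: from the algebra side this is literally so, and from the functor side $\mathcal{F}$ agrees with the reconstructed functor on generators, hence everywhere.

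The main obstacle is the input that \autoref{startrel}--\autoref{mobrel} really form a complete presentation of $\mcob{}$ as a symmetric monoidal category. Any further relation among nonorientable surfaces would impose an extra axiom on $m_A$. I would cite \cite[Theorem 3.9]{CoTu-mobius} and the surrounding classification for this completeness, which amounts to the normal-form argument: every connected cobordism in $\mcob{}$ reduces to genus-many handles and at most two Möbius insertions, using $m^3=hm$, which is Dyck's theorem. A secondary technical point is strictification. We work with skeletal $\mcob{}$ and with $R\textbf{Mod}$, whose monoidal structure is not strict, so this needs the usual coherence remark, exactly as in \cite{Ko-tqfts}.
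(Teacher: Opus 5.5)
Your proposal is correct and follows essentially the same route as the paper. In both, the commutative Frobenius algebra comes from the oriented correspondence, \autoref{mobmu} and \autoref{mobrel} are matched with \autoref{mobfrob1} and \autoref{mobfrob2}, the functor is rebuilt from a M\"obius Frobenius algebra by inserting $m_V$ wherever $m$ occurs, and natural isomorphisms are determined by their component at $1$. Your version is more explicit about the one real input, namely that \autoref{startrel}--\autoref{mobrel} form a complete presentation of $\mcob{}$; the paper uses this only implicitly when it says $m_V$ ``satisfies the same relations as $m$''. You also note that \autoref{mobs} holds automatically by naturality of the swap.
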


\begin{proof}
First suppose that we are given a Möbius TQFT $\mathcal{F}: \mcob{} \rightarrow R\textbf{Mod}$. If we define $V= \mathcal{F}(1)$, then it is clear that we can define $R$-linear maps $\mu_V=\mathcal{F}(\mu):V \otimes V \rightarrow V$, $\eta_V=\mathcal{F}(\eta): R \rightarrow V$, $\Delta_V=\mathcal{F}(\Delta):V \rightarrow V \otimes V$, $\epsilon_V = \mathcal{F}(\epsilon): V \rightarrow R$, and $m_V=\mathcal{F}(m): V \rightarrow V$. From \autoref{rmk2cob}, it follows that $(V, \mu_V, \eta_V, \Delta_V, \epsilon_V)$ is a commutative Frobenius algebra in $R\textbf{Mod}$ using the usual correspondence between oriented TQFTs and commutative Frobenius algebras. Furthermore, relations \autoref{mobmu} and \autoref{mobrel} ensure that $m_V$ satisfies \autoref{mobfrob1} and \autoref{mobfrob2}, and hence $(V, \mu_V, \eta_V, \Delta_V, \epsilon_V, m_V)$ is a Möbius Frobenius algebra in $R\textbf{Mod}$. Finally, it is clear that given two Möbius TQFTs $\mathcal{F}$, $\mathcal{F}'$ and a symmetric monoidal natural isomorphism $\alpha: \mathcal{F} \rightarrow \mathcal{F}'$, the $R$-linear map $\alpha_{\textbf{1}}:\mathcal{F}(1) \rightarrow \mathcal{F}'(1)$ is an isomorphism of Möbius Frobenius algebras in $R\textbf{Mod}$.

Now suppose that we are given a Möbius Frobenius algebra $(V, \mu_V, \eta_V, \Delta_V, \epsilon_V, m_V)$ in $R\textbf{Mod}$. Define a functor $\mathcal{F}:\mcob{} \rightarrow R\textbf{Mod}$ by setting $\mathcal{F}(n)=V^{\otimes n}$ with $V^{\otimes 0} = R$ on objects. To define $\mathcal{F}$ on morphisms, we refer to any given cobordism (or rather the corresponding isomorphism class of said cobordism) as a triple $(S, B_I, B_F)$ with $S$ a compact surface whose boundary is a disjoint union of two closed 1-manifolds $B_I = \bigsqcup_{i=1}^n \mathbb{S}^1$ and $B_F = \bigsqcup_{i=1}^m \mathbb{S}^1$ with fixed orientation. If $S$ is orientable, then we can use the correspondence between oriented TQFTs and commutative Frobenius algebras to build a well-defined $R$-linear map $\mathcal{F}(S, B_I, B_F)$ using the corresponding generators $\mu_V, \eta_V, \Delta_V, \epsilon_V$ in place of $\mu, \eta, \Delta, \epsilon$ and relations \autoref{startrel}--\autoref{endrel}.

Next consider the case that $S$ is nonorientable i.e. there must be at least one $m$ in any $\circ$-$\otimes$ decomposition by generators. We then build $\mathcal{F}(S, B_I, B_F)$ exactly as in the oriented case, except with insertions of $m_V$ whenever $m$ appears in $S$. The resulting map is well-defined as usual, since $m_V$ satisfies the same relations as $m$, together with their other respective generators.

Now we show that $\mathcal{F}$ is indeed a functor, and that it is symmetric monoidal. The latter is clear. It is also clear that the identity cobordism is sent to the identity $R$-linear map on $V$. Furthermore, given two cobordisms $\big(S_1, (B_I)_1, (B_F)_1\big)$ and  $\big(S_2, (B_I)_2, (B_F)_2\big)$ with $(B_F)_1  = (B_I)_2$, we can simply form the composite $\big(S_3, (B_I)_3, (B_F)_3\big) = \big(S_2 \circ S_1, (B_F)_2, (B_I)_1\big)$ much like the oriented case, since the boundaries have fixed orientations and are all copies of $\mathbb{S}^1$ i.e. all homeomorphisms between the boundaries must be trivial. By construction, it then follows that $$\mathcal{F}\big(S_3, (B_I)_3, (B_F)_3\big) = \mathcal{F}\big(S_2, (B_I)_2, (B_F)_2\big) \circ \mathcal{F}\big(S_1, (B_I)_1, (B_F)_1\big)$$ as required.

In addition, it is a straightforward exercise to prove that if there exists an isomorphism of Möbius Frobenius algebras $f: V \rightarrow V'$, then there is a symmetric monoidal natural isomorphism $\alpha: \mathcal{F} \rightarrow \mathcal{F}'$.

Finally, it is clear that the two constructions above are inverses of one another.
\end{proof}

\begin{Remark}
The proof of the above theorem is much simpler than the proof of a similar result for unoriented cobordisms e.g. in \cite{TuTu-utqft} primarily because of the fact that in our case the boundaries are all copies of $\mathbb{S}^1$ with fixed orientations, and so there are no nontrivial homeomorphisms between them.
\end{Remark}

\begin{Example}
Fix $R = \mathbb{Z}[\frac{1}{3}]$, and consider the MTQFT $\mathcal{F}$ defined in \autoref{E:mobn1}. We define the corresponding Möbius Frobenius algebra $(V, \mu_V, \eta_V, \Delta_V, \epsilon_V, m_V)$ by $V= \mathcal{F}(1)  $, $\mu_V = \mathcal{F}(\mu)$, $\eta_V = \mathcal{F}(\eta)$, $\Delta_V = \mathcal{F}(\Delta)$, $\epsilon_V = \mathcal{F}(\epsilon)$, and $m_V = \mathcal{F}(m)$ as shown there.
\end{Example}

We can translate the conditions on an MTQFT $\mathcal{F}: \mcob{} \rightarrow R\textbf{Mod}$ used to construct a chain complex and the Möbius homology of ribbon graphs from the previous section to the corresponding Möbius Frobenius algebra $(V, \mu_V, \eta_V, \Delta_V, \epsilon_V, m_V)$ as follows: we require that $V$ is a finite-dimensional free $\mathcal{G}$-graded $R$-module whose $R$-linear maps $\mu_V$, $\Delta_V$, and $m_V$ preserve the $\mathcal{G}$-grading up to the same constant shift, and that $m_V^2 = \mu_V \circ \Delta_V$. We can then build an identical chain complex with the $R$-module $V$ together with the maps $\mu_V$, $\Delta_V$, and $m_V$. We will therefore work exclusively with Möbius Frobenius algebras going forward.


\section{A choice related to $n$-coloring}\label{S:ncolorchoice}


We now explicitly construct a family of Möbius Frobenius algebras from which we define a ribbon graph invariant using the procedure outlined in \autoref{graphhomo}. We then compare the result to another known ribbon graph invariant, the \emph{Penrose polynomial}, which has been the subject of much research (see \eg \, \cite{Aig-penrose,ElMo-penrose}
and, of course, \cite{Pen-negative}) and whose evaluation at certain integral values relates to definitive properties of graphs \eg \, the number of $3$-edge colorings (or the number of $4$-face colorings); we will see more examples at the end of this section.

\subsection{Möbius Frobenius algebras for the Penrose polynomial}

Take $n \in \mathbb{N}_{>0}$, and set $R =\mathbb{Z}[\frac{1}{3n}]$. Consider the $R$-module
\begin{gather*}
V = R[x,y]/(x^n, y^3 -xy).
\end{gather*}
We can immediately write down a basis $B = \{1, y, y^2, x, xy, xy^2, \ldots, x^{n-1}, x^{n-1}y, x^{n-1}y^2\}$ of size $|B|=3n$. 

Now let $\mu_V: V \otimes V \rightarrow V$ be the usual multiplication map on the polynomial ring $V$, with $\eta_V: R \rightarrow V$, $r \mapsto r$ as the unit. Taking as the counit $\epsilon_V: V \rightarrow R$ with $\epsilon(x^{n-1}y^2) = 3n$ and zero on the other basis vectors, we have as the comultiplication $\Delta_V: V \rightarrow V \otimes V$:
\begin{gather}\label{comultdef}
\begin{aligned}
\Delta_V(v) =& -\frac{1}{3n}  \sum\limits_{\substack{i+j=n \\ i, j >0}} (v \cdot x^i) \otimes x^j \\
&+ \frac{1}{3n}\sum\limits_{\substack{i+j=n-1}} (v \cdot x^i y^2) \otimes x^j + (v \cdot x^iy) \otimes x^jy + (v \cdot x^i) \otimes x^jy^2,
\end{aligned}
\end{gather}
for all $v \in V$. In other words, $(V, \mu_V, \eta_V, \Delta_V, \epsilon_V)$ is a commutative Frobenius algebra in $R\textbf{Mod}$. Furthermore, if we set 
\begin{gather}\label{mdef}
m_V(v) =\begin{cases*}
v \cdot x^{(n-2)/2} y^2 & $n$ even,\\
v \cdot x^{(n-1)/2} y & $n$ odd,
\end{cases*}
\end{gather}
for all $v \in V$, then \autoref{mobfrob1} is automatically satisfied. We also have that
\begin{gather*}
(\mu_V \circ \Delta_V)(v) = v \cdot x^{n-1}y^2,
\end{gather*}
while
\begin{gather*}
m_V^2(v) =\begin{cases*}
v \cdot x^{(n-2)/2}\cdot m_V( y^2) & $n$ even,\\
v \cdot x^{(n-1)/2} \cdot m_V( y)& $n$ odd
\end{cases*} = v \cdot x^{n-1}y^2 = (\mu_V \circ \Delta_V)(v),
\end{gather*}
which implies \autoref{mobfrob2}. Therefore, we get the following.

\begin{Lemma}
The above choices
$(V, \mu_V, \eta_V, \Delta_V, \epsilon_V, m_V)$ give a Möbius Frobenius algebra in $R\textbf{Mod}$.
\end{Lemma}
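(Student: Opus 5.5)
The plan is to verify the three ingredients of \autoref{D:Mobiusfrobalg} separately: $(V,\mu_V,\eta_V,\Delta_V,\epsilon_V)$ is a commutative Frobenius algebra in $R\textbf{Mod}$, the square \autoref{mobfrob1} commutes, and the square \autoref{mobfrob2} commutes. Commutativity, associativity and unitality of $\mu_V$ are immediate since $V$ is a quotient of a polynomial ring. $V$ is free over $R$ with basis $B$, because $\{x^n, y^3-xy\}$ is a Gröbner basis: the leading terms $x^n$ and $y^3$ are coprime. So the Frobenius part reduces to showing two things. First, the pairing $\langle a,b\rangle=\epsilon_V(ab)$ is perfect over $R=\Z[\tfrac{1}{3n}]$. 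Second, $\Delta_V$ in \autoref{comultdef} is exactly the comultiplication $v\mapsto\sum_{b\in B} vb\otimes b^\vee$ attached to a dual basis $\{b^\vee\}$. Once this holds, the standard correspondence (see \cite{Ko-tqfts}) gives coassociativity, the counit axiom and the Frobenius relation \autoref{frob} for free.

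For the pairing, I would first record the multiplication rules in the basis: $y^3=xy$ and $y^4=xy^2$, hence
\[
x^ay^b\cdot x^cy^d=
\begin{cases}
x^{a+c}y^{b+d}, & b+d\le 2,\\
x^{a+c+1}y^{b+d-2}, & b+d\in\{3,4\}.
\end{cases}
\]
It follows that $\epsilon_V(x^ay^b\,x^cy^d)$ is nonzero (and then equal to $3n$) only when $b+d=2$ and $a+c=n-1$, or when $b+d=4$ and $a+c=n-2$. From this I would write down candidate duals:
\begin{align*}
(x^ay^2)^\vee&=\tfrac{1}{3n}x^{n-1-a},\\
(x^ay)^\vee&=\tfrac{1}{3n}x^{n-1-a}y,\\
(x^a)^\vee&=\tfrac{1}{3n}\bigl(x^{n-1-a}y^2-x^{n-a}\bigr),
\end{align*}
where the last correction term is absent when $a=0$. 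The correction $-x^{n-a}$ is there to cancel the extra pairing $\langle x^{a-1}y^2, x^{n-1-a}y^2\rangle$, which is nonzero because $y^4=xy^2$. Then I would check $\langle b,c^\vee\rangle=\delta_{b,c}$ case by case using the rule above. Only $3n$ is inverted, so this works over $R$, and perfectness follows. Substituting these duals into $\sum_b vb\otimes b^\vee$ and reindexing $i=a$, $j=n-1-a$ (respectively $j=n-a$) should reproduce \autoref{comultdef} term by term. The negative sum with $i+j=n$ and $i,j>0$ comes precisely from the corrections in $(x^a)^\vee$.

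I expect this dual-basis bookkeeping to be the main obstacle. The only delicate points are the cross terms created by $y^4=xy^2$ and the boundary case $a=0$, where $x^{n}=0$ kills the correction. If the direct check gets messy, I would instead order $B$ by $(b,a)$ and observe that the Gram matrix is $3n$ times a unitriangular matrix. That proves invertibility at once, and the identification with \autoref{comultdef} would then be checked only on the unit $v=1$, using $V$-linearity of $\Delta_V$ in the first factor.

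For \autoref{mobfrob1}: $m_V$ is multiplication by the fixed element $z=x^{(n-2)/2}y^2$ (for $n$ even) or $z=x^{(n-1)/2}y$ (for $n$ odd). So $\mu_V(m_V\otimes\mathrm{id})$, $m_V\mu_V$ and $\mu_V(\mathrm{id}\otimes m_V)$ all send $a\otimes b$ to $zab$, by commutativity and associativity. For \autoref{mobfrob2}: the computation before the statement gives $\mu_V\Delta_V(v)=v\,x^{n-1}y^2=m_V^2(v)$. For $n$ even this uses $z^2=x^{n-2}y^4=x^{n-1}y^2$, and for $n$ odd $z^2=x^{n-1}y^2$ directly. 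Both $\mu_V\Delta_V$ and $m_V$ are multiplication operators, so they commute, and therefore $\mu_V\circ\Delta_V\circ m_V=m_V\circ\mu_V\circ\Delta_V=m_V^3$, which is exactly \autoref{mobfrob2}. The identity $\mu_V\Delta_V(v)=v\,x^{n-1}y^2$ itself I would confirm by applying $\mu_V$ to \autoref{comultdef}: the $i+j=n$ terms vanish since $x^n=0$, and the three sums over $i+j=n-1$ contribute $n$ copies each of $x^{n-1}y^2$. This gives $\tfrac{3n}{3n}x^{n-1}y^2$, as required.
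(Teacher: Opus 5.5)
Your proof is correct and follows the paper's route: $m_V$ is multiplication by a fixed element, which gives \autoref{mobfrob1}, and $\mu_V\Delta_V = m_V^2$ gives \autoref{mobfrob2}, while your explicit dual basis supplies, for general $n$, the Frobenius-algebra verification that the paper only asserts (it carries out the inverse-pairing-matrix computation only for $n=2$). There is one small slip in your optional fallback: with rows and columns both ordered by $(b,a)$, the Gram matrix is symmetric, so it is $3n$ times a unitriangular matrix only after you index the columns by the complementary monomials $x^{n-1-c}y^{2-d}$.
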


\begin{proof}
An easy calculation.
\end{proof}

\begin{Example}\label{n2}
We have already seen the simplest case $n=1$ in \autoref{E:mobn1}, so let us look at the simplest even case $n =2$: firstly, we set $R= \mathbb{Z}[\frac{1}{6}]$. The base $R$-module is $ V = R[x,y]/(x^2, y^3 -xy)$, which with respect to the basis $B = \{1, y, y^2, x, xy, xy^2\}$ has multiplication table
\begin{center}
\begin{tabular}{ c |c c c c c c } 
& $1$ & $y$ & $y^2$ & $x$ & $xy$ & $xy^2$ \\ 
\hline
$1$ & $1$ & $y$ & $y^2$ & $x$ & $xy$ & $xy^2$ \\ 
$y$ & $y$ & $y^2$ & $xy$  & $xy $& $xy^2$ & 0\\ 
$y^2$ & $y^2$ & $xy$ & $xy^2$  & $xy^2 $& 0 & 0\\ 
$x$ & $x$ & $xy$ &  $xy^2$ & 0&  0& 0\\ 
$xy$ & $xy$ & $xy^2$ &  0 & 0&  0& 0\\ 
$xy^2$ & $xy^2$ & 0 &  0 & 0 & 0 & 0 \\ 
\end{tabular}.
\end{center}
Given counit $\epsilon_V(xy^2)=3\cdot2=6$ and zero otherwise, the pairing matrix is given by
\begin{gather*}
\begin{psmallmatrix}
0 & 0 & 0 & 0 & 0 & 6\\
0 & 0 & 0 & 0 & 6 & 0 \\
0 & 0 & 6 & 6 & 0 & 0 \\
0 & 0 & 6 & 0 & 0 & 0 \\
0 & 6 & 0 & 0 & 0 & 0 \\
6 & 0 & 0 & 0 & 0 & 0 \\
\end{psmallmatrix},
\end{gather*}
which has inverse 
\begin{gather*}
\frac{1}{6}\begin{psmallmatrix}
0 & 0 & 0 & 0 & 0 & 1\\
0 & 0 & 0 & 0 & 1 & 0 \\
0 & 0 & 0 & 1 & 0 & 0 \\
0 & 0 & 1 & -1 & 0 & 0 \\
0 & 1 & 0 & 0 & 0 & 0 \\
1 & 0 & 0 & 0 & 0 & 0 \\
\end{psmallmatrix},
\end{gather*}
and so $\Delta_V(1)=-\frac{1}{6} x \otimes x + \frac{1}{6}(xy^2 \otimes 1 + xy\otimes y + x \otimes y^2 + y^2\otimes x + y \otimes xy + 1 \otimes xy^2)$. Furthermore, we have $m_V(1) = y^2$, and so $m^2_V(1) = y^2m_V(1)=y^4 = xy^2=\mu_V\big(\Delta_V(1)\big)$, where the first equality follows since $m_V(v) = v \cdot m_V(1)$ by construction for all $v \in V$. Therefore, for all $v \in V$, we have $m_V^2(v) = v \cdot m_V^2(1)$, and in addition, also by construction, $\mu_V\big(\Delta_V(v)\big)= v \cdot \mu_V\big(\Delta_V(1)\big)$, and so we explicitly see that $m_V^2(v)  = \mu_V\big(\Delta_V(v)\big) = (\mu_V \circ \Delta_V)(v)$.
\end{Example}

At the end of \autoref{S:Frob}, we outlined the conditions that our Möbius Frobenius algebra given by $(V, \mu_V, \eta_V, \Delta_V, \epsilon_V, m_V)$ must satisfy in order to construct the Möbius homology of a given ribbon graph from \autoref{graphhomo}. We already have that $V$ is free and finite dimensional, and that $m_V^2 = \mu_V \circ \Delta_V$. We now need to specify a grading; for this, let
\begin{gather*}
\mathcal{G} =\begin{cases*}
\Z/n\Z \times \Z/2\Z & $n$ even,\\
\Z/n\Z & $n$ odd,
\end{cases*}
\end{gather*}
and set $V = \oplus_{g \in \mathcal{G}}W_g$ such that
\begin{gather}\label{graddef}
x^iy^j \in \begin{cases*}
W_{(2i+j, j)} & $n$ even,\\
W_{2i+j} & $n$ odd,
\end{cases*}
\end{gather}
where $0 \leq i \leq n-1$ and $0 \leq j \leq2$.

\begin{Proposition}\label{gradwelldef}
The $\mathcal{G}$-grading is well-defined for any element $x^iy^j$ in $V$ with indices $i,j$ defined over all nonnegative integers.
\end{Proposition}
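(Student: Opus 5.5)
The plan is to show that assigning $x^iy^j$ the degree $(2i+j,j)$ (respectively $2i+j$) is compatible with the defining relations of $V=R[x,y]/(x^n,y^3-xy)$. Equivalently, the grading on the free polynomial ring $R[x,y]$ descends to the quotient. Concretely, I would grade $R[x,y]$ by $\mathcal{G}$ by putting $\deg x=(2,0)$ and $\deg y=(1,1)$ for $n$ even, and $\deg x=2$ and $\deg y=1$ for $n$ odd. This is a monoid grading, because the degree is additive on monomials. It also agrees with \autoref{graddef} on the monomials $x^iy^j$ with $0\le i\le n-1$ and $0\le j\le 2$. It then suffices to check that the ideal $(x^n,y^3-xy)$ is homogeneous, i.e. generated by homogeneous elements. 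The quotient is then $\mathcal{G}$-graded, and every monomial $x^iy^j$ (arbitrary $i,j\ge0$) has a well-defined degree that is preserved by reduction to the basis $B$.

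The key steps, in order, are as follows. First, check homogeneity of the generators. The relation $x^n$ is a monomial, so it is trivially homogeneous; its degree is $(2n,0)\equiv(0,0)$, or $2n\equiv 0$, but this is not even needed. For $y^3-xy$, compute $\deg y^3=(3,3)\equiv(3,1)$ in $\Z/n\Z\times\Z/2\Z$ and $\deg xy=(3,1)$. For $n$ odd, both degrees equal $3$ in $\Z/n\Z$. So $y^3-xy$ is homogeneous. Second, make the reduction explicit, so that the statement about ``indices defined over all nonnegative integers'' is transparent. Any $x^iy^j$ reduces via $y^3=xy$ to $x^{i+k}y^{j-2k}$ with $j-2k\in\{1,2\}$ when $j\ge1$, or to $x^i$ when $j=0$. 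Each step replaces $y^2$ by $x$, which preserves $2i+j$ exactly (as an integer) and preserves $j$ modulo $2$. If the resulting $x$-exponent reaches $n$, the element is $0$, which lies in every graded piece. So the degree is independent of the order of reductions.

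There is no real obstacle. The only point needing care is the parity component for $n$ even: $y^2\mapsto x$ shifts $j$ by $2$, which is why the second factor is only $\Z/2\Z$. The first component needs no modular reduction at all, since $2i+j$ is literally invariant; reading it modulo $n$ is harmless, and the term $x^n=0$ imposes no constraint. I would close by remarking that $\mu_V$ is then degree-preserving. This is the property used afterwards for $\Delta_V$ and $m_V$, but it lies outside the statement itself.
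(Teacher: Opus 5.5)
Your argument is correct, but it takes a different route from the paper's. The paper works with normal forms. It writes $i=qn+r$ and $j=3q'+r'$, and notes that $q>0$ forces $x^iy^j=0$. For $q=0<q'$ it uses $y^{3q'}=(xy)^{q'}$ to rewrite $x^iy^j=x^{r+q'}y^{q'+r'}$. It then checks that the degree formula gives the same value for both monomials, with the parity component handled via $3q'\equiv q' \bmod 2$.

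You instead grade the free algebra $R[x,y]$ by $\deg x=(2,0)$, $\deg y=(1,1)$ (respectively $2$ and $1$). You then observe that the ideal $(x^n,y^3-xy)$ has homogeneous generators, which reduces everything to the single check $\deg y^3=(3,1)=\deg(xy)$. Your route shows at once that $V$ is a $\mathcal{G}$-graded algebra, so the $\mu_V$-part of the next proposition comes for free.

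Your route is also airtight. The paper's step assigns a degree to $x^{r+q'}y^{q'+r'}$ by the very formula being justified. Yet that monomial can still have $y$-exponent at least $3$, or $x$-exponent at least $n$, so strictly speaking an induction on the $y$-exponent is needed there.

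What the paper's route gives is an explicit rewriting of $x^iy^j$ in the basis $B$. Your second step recovers exactly this. Logically it is redundant once homogeneity is known, though your remark that $2i+j$ is invariant as an integer is a nice observation.

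One point worth making explicit in your write-up is why the quotient grading coincides with the decomposition $\bigoplus_g W_g$ of \autoref{graddef}. The reason is that $B$ is an $R$-basis of $V$ consisting of homogeneous elements, so each graded piece of the quotient is spanned by the basis monomials of that degree.
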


\begin{proof}
We would like to establish that the $\mathcal{G}$-grading respects, in particular, the relations $x^n = 0$ and $y^3 = xy$ defining $V$. Let us begin by expressing any nonnegative integers $i, j$ as $i = q n + r$ and $j = 3q' + r'$ with $q, q'$ nonnegative integers, and $0 \leq r \leq n-1$, $0 \leq r' \leq 2$. We then have 
\begin{align}
W_{(2i+j, j)} &= W_{(2r+3q'+r', q'+r')} \hspace{0.5cm} \text{for $n$ even} \label{grad1} \\
W_{2i+j} &= W_{2r+3q'+r'} \hspace{0.5cm} \text{for $n$ odd} \label{grad2}
\end{align}
Note that, if $q > 0$, then $x^{i}y^{j} = 0$, which is in every graded subspace; in particular, the $\mathcal{G}$-grading respects the relation $x^n=0$.

Furthermore, if $q'>0$ and $q=0$, we have $x^{i}y^{j} = x^{r}y^{3q'+r'} = x^{r+q'}y^{q'+r'}$ in $V$. We have 
\begin{align*}
x^{r+q'}y^{q'+r'} \in \begin{cases*}
W_{(2r + 2q'+q'+r', q'+r')} = W_{(2r + 3q'+r', q'+r')}= W_{(2i + j, j)}  & $n$ even,\\
W_{2r + 2q'+q'+r'} = W_{2r + 3q'+r'}= W_{2i + j}  & $n$ odd,
\end{cases*}
\end{align*}
where the final equality in both cases uses \autoref{grad1} and \autoref{grad2} respectively. The $\mathcal{G}$-grading therefore respects the relation $y^3=xy$.

The remaining case i.e. $q=q'=0$ is simply the base case covered in \autoref{graddef}.
\end{proof}

\begin{Example}\label{gradedexmp}
For $n=1$, the grading is trivial. For $n=2$, we have the following decomposition $V = W_{(0,0)} \oplus W_{(1,0)}\oplus W_{(0,1)} \oplus W_{(1,1)}$ with $1, y^2, x, xy^2 \in W_{(0,0)}$ and $y, xy \in W_{(1,1)}$, with $W_{(1,0)}=W_{(0,1)}=\{0\}$. 
\end{Example}

\begin{Proposition}
The maps $\mu_V$, $\Delta_V$, and $m_V$ preserve the $\mathcal{G}$-grading with zero shift.
\end{Proposition}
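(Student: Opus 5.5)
The plan is to reduce everything to degree bookkeeping on monomials. By \autoref{gradwelldef}, the assignment $x^iy^j\mapsto (2i+j,j)$ (for $n$ even), respectively $x^iy^j\mapsto 2i+j$ (for $n$ odd), is well-defined for all nonnegative $i,j$, even though monomials outside the basis $B$ are rewritten using $x^n=0$ and $y^3=xy$. This assignment is visibly additive in the exponents. As usual, I will grade $V\otimes V$ by adding the degrees of the two tensor factors. Since all three maps are $R$-linear, it suffices to check them on homogeneous basis vectors $v=x^ay^b$.

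\emph{Multiplication.} We have $\mu_V(x^ay^b\otimes x^cy^d)=x^{a+c}y^{b+d}$. By additivity, its degree is the sum of the degrees of the two inputs, and the well-definedness from \autoref{gradwelldef} guarantees this is independent of how the product is reduced in $V$. So $\mu_V$ has zero shift.

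\emph{The map $m_V$.} By \autoref{mdef}, $m_V$ is multiplication by a fixed element. By the previous step, it therefore has shift equal to the degree of that element, and it remains to compute this degree.
\begin{itemize}
\item For $n$ even, $x^{(n-2)/2}y^2$ has degree $(n-2+2,\,2)=(n,2)=(0,0)$ in $\Z/n\Z\times\Z/2\Z$.
\item For $n$ odd, $x^{(n-1)/2}y$ has degree $n-1+1=n=0$ in $\Z/n\Z$.
\end{itemize}
Hence $m_V$ has zero shift.

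\emph{Comultiplication.} By \autoref{comultdef}, each summand of $\Delta_V(v)$ has the form $(v\cdot a)\otimes b$ for a monomial pair $(a,b)$. Each summand therefore has degree $\deg v+\deg a+\deg b$, and I only need $\deg a+\deg b=0$ for every pair. I will check the four types of pairs in turn.
\begin{itemize}
\item For $(x^i,x^j)$ with $i+j=n$, the degree is $(2n,0)$, respectively $2n$, which vanishes.
\item For $(x^iy^2,x^j)$, $(x^iy,x^jy)$ and $(x^i,x^jy^2)$ with $i+j=n-1$, the first component is $2(n-1)+2=2n\equiv 0$ in every case.
\item In the same three cases, the second component (for $n$ even) is $2$, $1+1$ or $2$, so it is $\equiv 0 \pmod 2$.
\end{itemize}
Hence $\Delta_V$ also has zero shift.

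I expect no real obstacle. The only subtle point is that multiplying monomials may leave the basis $B$, so that degrees must be computed after reduction by $x^n=0$ and $y^3=xy$. This is exactly what \autoref{gradwelldef} handles, and I would cite it explicitly in the multiplication step.
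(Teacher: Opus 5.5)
Your proposal is correct and follows the paper's proof essentially step for step. For $\mu_V$ you use additivity of the monomial degree, justified by the well-definedness proposition. For $\Delta_V$ you check that every tensor pair in the comultiplication formula has degree zero. For $m_V$ you note that its multiplier has degree $(n,2)=(0,0)$, respectively $n\equiv 0$.
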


\begin{proof}
First, observe that $x^iy^j \cdot x^{i'} y^{j'} = x^{i+i'}y^{j+j'}$ with 
\begin{gather*}
x^{i+i'}y^{j+j'} \in \begin{cases*}
W_{(2i + 2i'+j+j', j+j')} = W_{(2i + j, j)+(2i'+j', j')}  & $n$ even,\\
W_{2i + 2i'+j+j'} = W_{(2i + j)+(2i'+j')} & $n$ odd,
\end{cases*}
\end{gather*}
where we have used \autoref{gradwelldef} to define the $\mathcal{G}$-grading for any nonnegative $i, i', j, j'$, which shows that $\mu_V$ preserves the $\mathcal{G}$-grading with zero shift.

Now suppose that we are given a $v\in V$ such that $v \in W_g$ for some $g \in \mathcal{G}$. From the definition \autoref{comultdef} of comultiplication, the first summation in $\Delta_V(v)$ contains terms with $\mathcal{G}$-grading $g+(2i,0)+(2j,0)=g+(2(i+j),0)=g+(2n, 0)=g+ (0,0)=g$ for $n$ even, and an analogous computation for $n$ odd. Similarly, the second summation contains terms with $\mathcal{G}$-grading $g+(2(i+j)+2,2)=g+(2(n-1)+2, 0)=g+ (2n,0)=g+ (0,0)=g$ for $n$ even, and an analogous computation for $n$ odd. Therefore, $\Delta_V$ preserves the $\mathcal{G}$-grading with zero shift.

Finally, from the definition \autoref{mdef}, the element $m_V(v)$ has $\mathcal{G}$-grading $g+ (2(n-2)/2+2,2)=g+ (n,0) = g+ (0,0) =g$ for $n$ even, and $g + 2(n-1)/2 + 1=g+n=g$ for $n$ odd. Therefore, $m_V$ also preserves the $\mathcal{G}$-grading with zero shift, and the proposition is proven.
\end{proof}

We can now build a chain complex and Möbius homology associated with the specified Möbius Frobenius algebra $(V, \mu_V, \eta_V, \Delta_V, \epsilon_V, m_V)$ of any ribbon graph. From here, it is straightforward to determine the graded Euler characteristic by first calculating the graded dimension of $V$. As before in \autoref{E:Grading}, we use the grading variables $q$ and $b$.

\begin{Proposition}\label{qdimhom}
The graded dimension of $V$ is given by:
\begin{gather*}
q\text{dim}(V) =\begin{cases*}
2(2 + 2q^2+ \ldots + 2q^{n-2} + qb + \ldots + q^{n-1}b) & $n$ even,\\
3(1 + q + \ldots + q^{n-1})& $n$ odd.
\end{cases*}
\end{gather*}
\end{Proposition}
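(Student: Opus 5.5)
The plan is to compute $q\text{dim}(V)$ directly from the monomial basis $B=\{x^iy^j : 0\le i\le n-1,\ 0\le j\le 2\}$, which is a basis of $V$ over $R$. By \autoref{graddef}, each basis vector is homogeneous, so $q\text{dim}(V)$ is the sum over $B$ of the monomials $q^{2i+j}b^{j}$ for $n$ even, and $q^{2i+j}$ for $n$ odd. Here we use the conventions of \autoref{E:Grading}: exponents of $q$ are read modulo $n$, and exponents of $b$ modulo $2$. By \autoref{gradwelldef} there is no ambiguity in these degrees. The computation therefore reduces to counting, for each fixed $j\in\{0,1,2\}$, how often the map $i\mapsto 2i+j \bmod n$ hits each residue as $i$ runs over $\{0,\ldots,n-1\}$.

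For $n$ odd, $2$ is invertible modulo $n$. So for each fixed $j$, the map $i\mapsto 2i+j$ is a bijection $\Z/n\Z\to\Z/n\Z$, and each $j$ contributes $1+q+\cdots+q^{n-1}$. Summing over the three values of $j$ gives $3(1+q+\cdots+q^{n-1})$.

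For $n$ even, I first handle the map $i\mapsto 2i \bmod n$ on $\{0,\ldots,n-1\}$. Its image is the set of even residues, and it is exactly two-to-one, because $2i\equiv 2i'$ holds iff $i\equiv i' \bmod n/2$. Hence:
\begin{enumerate}
\item $j=0$ contributes $2(1+q^2+\cdots+q^{n-2})$;
\item $j=2$ has degrees $(2i+2,0)$, which again run twice over the even residues, contributing the same amount;
\item $j=1$ has degrees $(2i+1,1)$, which run twice over the odd residues, contributing $2(qb+q^3b+\cdots+q^{n-1}b)$.
\end{enumerate}
Adding these gives $4\sum_{k\text{ even}}q^k+2\sum_{k\text{ odd}}q^kb$.

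The main obstacle is only bookkeeping: matching this sum with the displayed expression $2(2+2q^2+\cdots+2q^{n-2}+qb+\cdots+q^{n-1}b)$. I read the displayed expression as coefficient $2$ on every even power inside the bracket and coefficient $1$ on every odd power times $b$. With that reading the two agree. As a sanity check, I compare with \autoref{gradedexmp} for $n=2$, where $W_{(0,0)}$ has rank $4$ and $W_{(1,1)}$ has rank $2$, giving $4+2qb$.
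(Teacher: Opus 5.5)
Your proposal is correct and is essentially the paper's argument. The paper also counts the monomial basis $x^iy^j$ by degree. For $n$ even, it observes that the residues $2i$ and $2i+2$ each cover the even classes twice and $2i+1$ covers the odd classes twice; for $n$ odd, each of the three maps hits every class once. Your justifications (invertibility of $2$ modulo $n$ in the odd case, and the two-to-one map $i\mapsto 2i \bmod n$ in the even case) simply make explicit the steps the paper calls ``straightforward to see''.
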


\begin{proof}
For $n$ even, $\mathcal{G} = \Z/n\Z \times \Z/2\Z$ with two associated generators, and so the graded dimension is a polynomial in two variables, say $q, b$. As the integer $i$ runs from $0$ to $n-1$, it follows that $2i \,\text{mod}\,n$ coming from the grading of $x^i$, as well as $2i+2 \,\text{mod}\,n $ coming from the grading of $x^iy^2$, hits all the even integers from $0$ to $n-2$ twice. On the other hand, $2i + 1\,\text{mod}\,n$, coming from the grading of $x^iy$, hits all the odd integers from $1$ to $n-1$ twice. Therefore, $W_{(2i,0)}$ are four dimensional while $W_{(2i+1,1)}$ are two dimensional given $i = 0, \ldots, n/2-1$.

For $n$ odd, $\mathcal{G} = \Z/n\Z $ with one associated generator, and so the graded dimension is a polynomial in one variable, say $q$. As the integer $i$ runs from $0$ to $n-1$, it is straightforward to see that $2i \,\text{mod}\,n$ coming from the grading of $x^i$, $2i+2 \,\text{mod}\,n $ coming from the grading of $x^iy^2$, and $2i + 1\,\text{mod}\,n$ coming from the grading of $x^iy$, each hit all integers from $0$ to $n-1$. Therefore, each $W_{g}$, $g \in \Z/n\Z$, is three dimensional.
\end{proof}

\begin{Example}
Following from \autoref{E:mobn1} and \autoref{gradedexmp}, we know that for $n=1$ the grading is trivial and there are three basis vectors, and so $q\text{dim}(V)=3$, as expected. For $n=2$, from \autoref{gradedexmp} we know that there are four basis vectors with grading $(0,0)$ and the remaining two basis vectors have grading $(1,1)$, giving $q\text{dim}(V) = 4+2qb$, also as expected.
\end{Example}

Given a perfect matching graph $\Gamma_M$ with trivalent underlying graph $G$, which is trivalent with perfect matching $M$, we can then determine the graded Euler characteristic using \autoref{T:maintheorem} together with \autoref{qdimhom}: we simply calculate the bracket $\langle \Gamma_M \rangle_n$ of $\Gamma_M$ characterized by the following skein relations:
\begin{align}
\Bigg\langle\begin{tikzpicture}[anchorbase,yscale=-1,scale=0.75]
\draw[usual] (0,0) to (0.5,0.25);
\draw[usual] (0.5,0.25) to (1,0);
\draw[match] (0.5,0.25) to (0.5,1);
\draw[usual] (0,1.25) to (0.5,1);
\draw[usual] (1,1.25) to (0.5, 1);
\node at (0.5,0.25)[circle,fill,inner sep=1.55pt]{};
\node at (0.5,1)[circle,fill,inner sep=1.55pt]{};
\end{tikzpicture} \Bigg\rangle_n
\quad &= \quad
\, \Bigg\langle\begin{tikzpicture}[anchorbase,yscale=-1,scale=0.75]
\draw[usual] (0,0) to (0,1);
\draw[usual] (0.5,0) to (0.5,1);
\end{tikzpicture} \Bigg\rangle_n \quad - \quad  \,\Bigg\langle\begin{tikzpicture}[anchorbase,yscale=-1,scale=0.75]
\draw[usual] (0,0) to (0.5,1);
\draw[usual] (0.5,0) to (0,1);
\end{tikzpicture} \Bigg\rangle_n, \label{skein12} \\
\Bigg\langle\begin{tikzpicture}[anchorbase,yscale=-1,scale=0.75]
\filldraw[color=black, fill=white, thick](-1,0) circle (0.5);
\end{tikzpicture}\Bigg\rangle_n \quad &= \quad \begin{cases*}
2(2 + 2q^2+ \ldots + 2q^{n-2} + qb + \ldots + q^{n-1}b) & $n$ even,\\
3(1 + q + \ldots + q^{n-1})& $n$ odd,
\end{cases*} \label{skein22} \\
\langle \Gamma_1 \sqcup \Gamma_2 \rangle_n \quad &= \quad \langle \Gamma_1 \rangle_n \cdot \langle \Gamma_2 \rangle_n. \label{skein32}
\end{align}
We think of \autoref{skein12}--\autoref{skein32} as a refinement of 
\autoref{skein1}--\autoref{skein3}.

We can also write down the associated graded Poincaré polynomial, defined in \autoref{poin}, and we will see explicit examples of this in \autoref{S:Exmp}. 

\begin{Remark}\label{R:poin2eul}
Note that setting $t=-1$ in the graded Poincaré polynomial also gives the graded Euler characteristic (see the proof of the second part of \autoref{T:maintheorem}).
\end{Remark}

\subsection{Penrose polynomial} Suppose that we are given a ribbon graph $\Gamma$ and associated blowup $\Gamma^\flat$. Now use the canonical perfect matching of $\Gamma^\flat$, with the edges of the underlying graph of $\Gamma$ as the perfect matching, to form
the perfect matching graph $\Gamma^\flat_E$. The Penrose polynomial $P(\Gamma, n)$ of $\Gamma$ is then the bracket $\langle \Gamma^\flat_E \rangle$ from \autoref{D:mobpoly} with $A = 1$, $B =-1$, and $C = n$. Using \autoref{brackforribb}, \autoref{P:bracketinv} and the rest of the discussion in \autoref{S:polyinv}, we can conclude that $P(\Gamma, n)$ is indeed a well-defined ribbon graph invariant.

Let $\langle \Gamma_E^\flat \rangle_n$ denote the bracket associated with the Möbius homology characterized by \autoref{skein12}--\autoref{skein32} above. We have the following proposition.

\begin{Proposition}\label{pencorr}
Recall that as a polynomial, $\langle \Gamma_E^\flat \rangle_n = \langle \Gamma_E^\flat\rangle_n(q, b)$ for $n$ even and for $n$ odd, $\langle \Gamma_E^\flat \rangle_n = \langle \Gamma_E^\flat \rangle_n(q)$. We have that:
\begin{gather*}
\langle \Gamma_E^\flat\rangle_n(q, b) =\begin{cases*}
P(\Gamma, 3n) & when $q=b=1$,\\
P(\Gamma, n) & when $q =1$, $b=-1$,
\end{cases*}
\end{gather*}
for $n$ even, and $$\langle \Gamma_E^\flat \rangle_n(1) = P(\Gamma, 3n)$$ for $n$ odd.
\end{Proposition}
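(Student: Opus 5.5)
The plan is to compare the state sums directly. By the lemma following \autoref{E:hypercube}, both $\langle \Gamma_E^\flat\rangle_n$ and $P(\Gamma,\cdot)$ are sums over the same hypercube of states of $\Gamma^\flat_E$. Both use the same resolutions of the perfect matching edges, so the states $\Gamma_\alpha$ and their circle counts $k_\alpha$ coincide. Hence
\[
\langle \Gamma_E^\flat\rangle_n=\sum_{\alpha}(-1)^{|\alpha|}\,\big(q\text{dim}(V)\big)^{k_\alpha},\qquad
P(\Gamma,N)=\sum_{\alpha}(-1)^{|\alpha|}N^{k_\alpha}.
\]
The first formula is \autoref{skein12}--\autoref{skein32}, with $A=1$ and $B=-1$, since the shift $s$ is zero by the proposition that $\mu_V,\Delta_V,m_V$ preserve the grading with zero shift. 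The second is the definition of the Penrose polynomial as the bracket with $A=1$, $B=-1$, $C=N$. Note that \autoref{T:maintheorem} gives $B=-q_1^{s_1}\cdots$, which reduces to $-1$ here; I will state this explicitly. Evaluating a polynomial commutes with sums and products, so the whole statement reduces to evaluating the circle value $q\text{dim}(V)$ from \autoref{qdimhom} at the specified points.

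For $n$ odd, setting $q=1$ in $3(1+q+\dots+q^{n-1})$ gives $3n$, so $\langle \Gamma_E^\flat\rangle_n(1)=P(\Gamma,3n)$.

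For $n$ even, recall $q\text{dim}(V)=2\big(2+2q^2+\dots+2q^{n-2}+qb+q^3b+\dots+q^{n-1}b\big)$. There are $n/2$ even-exponent terms, each with coefficient $2$, and $n/2$ odd-exponent terms carrying $b$.
\begin{itemize}
\item At $q=b=1$ the value is $2(2\cdot\tfrac n2+\tfrac n2)=3n$, giving $P(\Gamma,3n)$. This is also consistent with $\text{rank}_R V=|B|=3n$.
\item At $q=1$, $b=-1$ the value is $2(n-\tfrac n2)=n$, giving $P(\Gamma,n)$.
\end{itemize}

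The main point needing care is the parenthesization in \autoref{qdimhom}. The factor $2$ multiplies everything, so the even-graded pieces $W_{(2i,0)}$ are $4$-dimensional and the odd pieces $W_{(2i+1,1)}$ are $2$-dimensional. This matches the proof of \autoref{qdimhom} and the check $q\text{dim}(V)=4+2qb$ for $n=2$. I would verify this against the $n=2$ example, and then the sign computation $4\cdot\tfrac n2-2\cdot\tfrac n2=n$ follows. A secondary point is that the specialization $q=1$ is legitimate even though $q$ is a $\Z/n\Z$-grading variable, since $1^n=1$ and $(-1)^2=1$. So evaluating at these points is a well-defined ring homomorphism out of $\Z[q,b]/(q^n-1,b^2-1)$, and applying it to the state sum finishes the proof.
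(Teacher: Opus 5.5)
Your proposal is correct and takes the same route as the paper: both reduce the claim to evaluating the circle value $q\text{dim}(V)$ from \autoref{qdimhom} at the given points, which yields $3n$ and $n$ for $n$ even and $3n$ for $n$ odd. Your observation that evaluation is a well-defined ring homomorphism out of $\Z[q,b]/(q^n-1,b^2-1)$ is a cleaner justification of the paper's remark that specializing before or after taking powers does not change the result.
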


\begin{proof}
In \autoref{skein22}, for $n$ even, if we set $q=b=1$, 
\begin{gather}\label{topen}
2(2 + 2q^2+ \ldots + 2q^{n-2} + qb + \ldots + q^{n-1}b)  = \begin{cases*}
2(2n/2 + n/2)=3n & $q=b=1$,\\
2(2n/2 - n/2)=n & $q=1$, $b=-1$,
\end{cases*} 
\end{gather}
and since $n$ is even, $(qb)^n=1$ and $b$ only appears with odd powers of $q$ i.e. setting $q=1$, $b=-1$ before or after taking powers of the l.h.s. of \autoref{topen} does not affect the computation; of course this is also trivially true for $q=b=1$.

For $n$ odd, setting $q =1$ gives
\begin{gather}\label{topen2}
3(1 + q + \ldots + q^{n-1}) = 3n,
\end{gather}
and setting $q=1$ before or after taking powers of the l.h.s. of \autoref{topen2} again does not affect the computation.

Altogether, we have shown that the skein relations characterizing the Penrose polynomial have been recovered for the respective cases.
\end{proof}

We conclude this section by listing some of the explicit ribbon graph invariants that can be obtained using the Penrose polynomial; for more information and proofs, see \cite{Aig-penrose,ElMo-penrose,Pen-negative}.

For the M\"obius Frobenius algebra from \autoref{S:ncolorchoice}, write
\[
MH_n^{i,g}(\Gamma_M;R)
\text{ and }
Mh_n(\Gamma_M)
\]
for the corresponding M\"obius homology and graded Poincar\'e polynomial, respectively. For $n$ even, write $g=(a,\epsilon)\in\Z/n\Z\times\Z/2\Z$.

\begin{Definition}
For $n$ even, define the \emph{positive and negative parts of the M\"obius homology} and their $q$-graded ranks by
\begin{gather*}
MH_n^{+}(\Gamma_M;R)
=
\bigoplus_{\substack{i,a,\epsilon\\ i+\epsilon\equiv 0 \bmod 2}}
MH_n^{i,(a,\epsilon)}(\Gamma_M;R),\quad
Q_n^{+}(\Gamma_M)
=q\text{dim}MH_n^{+}(\Gamma_M;R)|_{b=1},
\\
MH_n^{-}(\Gamma_M;R)
=
\bigoplus_{\substack{i,a,\epsilon\\ i+\epsilon\equiv 1 \bmod 2}}
MH_n^{i,(a,\epsilon)}(\Gamma_M;R)\quad
Q_n^{-}(\Gamma_M)
=q\text{dim}MH_n^{-}(\Gamma_M;R)|_{b=1},
\end{gather*}
where we set $b=1$. Moreover, for $f,g\in\Z[q]/(q^4-1)$, write $f>_{\mathrm{cw}}g$ if, after representing $f-g$ as
$c_0+c_1q+c_2q^2+c_3q^3$, all coefficients $c_i$ are nonnegative and at least one is positive.
\end{Definition}

For simplicity of notation, in the final point of \autoref{penfac}, the numbers $n'$ and $x$ are determined by \autoref{pencorr}.

\begin{Theorem}\label{penfac}
Let $\Gamma$ be a ribbon graph whose underlying graph $G=(V,E,r)$ is connected and planar. We have the following:
\begin{enumerate}
\item $\langle\Gamma^\flat_E\rangle_2(1,-1)=2^{|V|}$ if $\Gamma$ is Eulerian and zero otherwise.
\item $\langle\Gamma^\flat_E\rangle_1(1)=\#\{3\text{-edge colorings of $\Gamma$}\}$ when $\Gamma$ is trivalent.
\item $\Gamma$ is $4$-face colorable if and only if
\[
\big(Q_4^{+}(\Gamma^\flat_E)-Q_4^{-}(\Gamma^\flat_E)\big)\big|_{q=1}>0.
\]
In particular, the $q$-graded homological condition
\[
Q_4^{+}(\Gamma^\flat_E)>_{\mathrm{cw}} Q_4^{-}(\Gamma^\flat_E)
\]
implies that $\Gamma$ is $4$-face colorable. Thus proving this coefficientwise positivity condition for every connected planar ribbon graph $\Gamma$ would imply the four color theorem.
\item Given a nonnegative integer $n$ with $n\in2\Z$ or $n\in3\Z$, $\langle\Gamma^\flat_E\rangle_n(x)\geq\chi(\Gamma^*,n')$, where $\chi(\Gamma^*,n')$ is the chromatic polynomial of the geometric dual of $\Gamma$.
\end{enumerate}
\end{Theorem}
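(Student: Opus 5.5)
The plan is to reduce every part to a classical fact about the Penrose polynomial $P(\Gamma,\cdot)$ of a connected plane graph via \autoref{pencorr}. We quote these facts from \cite{Aig-penrose,ElMo-penrose,Pen-negative}; the only new work is in (c). For (a), \autoref{pencorr} with $n=2$ gives $\langle\Gamma^\flat_E\rangle_2(1,-1)=P(\Gamma,2)$. Aigner's theorem for plane graphs then says $P(\Gamma,2)=2^{|V|}$ if $\Gamma$ is Eulerian and $0$ otherwise. (One has to check that the blowup normalisation of the bracket agrees with Aigner's medial-graph state sum; it does, since the $0$/$1$ smoothings at a matching edge are exactly the white/crossing transitions at the corresponding medial vertex.) For (b), \autoref{pencorr} with $n=1$ gives $P(\Gamma,3)$, and Penrose's theorem identifies this with the number of Tait colorings of a planar trivalent graph. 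For (d), take $x=(1,1)$ with $n'=3n$, or $x=(1,-1)$ with $n'=n$ in the even case. Then apply Aigner's inequality $P(\Gamma,k)\ge\chi(\Gamma^*,k)$, valid for $k\in 2\Z\cup 3\Z$ (or whichever of these ranges the cited statement covers); the hypothesis on $n$ is arranged so that $n'$ lands in that range.

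Part (c) is the main step. First, Aigner shows that $P(\Gamma,4)>0$ if and only if $\Gamma$ is $4$-face colorable, with $P(\Gamma,4)\ge\chi(\Gamma^*,4)\ge0$ always. By \autoref{pencorr} with $n=4$ at $q=1$, $b=-1$, this equals $\langle\Gamma^\flat_E\rangle_4(1,-1)$. Next, by \autoref{T:maintheorem} and \autoref{R:poin2eul}, the graded Euler characteristic is $\sum_{i,a,\epsilon}(-1)^i q^a b^\epsilon \dim MH_4^{i,(a,\epsilon)}$. Setting $b=-1$ turns the sign into $(-1)^{i+\epsilon}$, so this sum equals $Q_4^+-Q_4^-$ as elements of $\Z[q]/(q^4-1)$. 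Evaluating at $q=1$, which is legitimate because $q^4=1$ is respected, gives $P(\Gamma,4)$, and the equivalence follows.

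For the ``in particular'' statement, coefficientwise positivity $>_{\mathrm{cw}}$ forces the coefficient sum to be positive, hence the value at $q=1$ is positive. Since the four color theorem asserts $4$-face colorability for every connected planar ribbon graph, this yields the final remark.

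The expected obstacle is bookkeeping rather than mathematics. One must confirm that the homology is free, or work over $\mathbb{Q}$ via ranks, so that the Euler characteristic can be computed from homology. One must also fix the shift $s$ (here $s=0$, since the maps preserve grading with zero shift) so that the sign $B=-1$ matches the Penrose convention. Finally, the statement should cite Aigner's results in exactly the form used, for plane graphs with $\Gamma$ given its planar ribbon structure.
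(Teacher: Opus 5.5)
Your proposal is correct and follows essentially the same route as the paper. Parts (a), (b) and (d) are read off from classical Penrose-polynomial facts via \autoref{pencorr}, and for (c) you identify $Q_4^{+}-Q_4^{-}$ with $Mh_4|_{t=-1,b=-1}$ (the sign becoming $(-1)^{i+\epsilon}$), evaluate at $q=1$ to get $\langle\Gamma^\flat_E\rangle_4(1,-1)=P(\Gamma,4)$, and invoke the cited criterion, exactly as the paper does.

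One small correction to your closing sentence: the four color theorem only asserts $4$-face colorability for \emph{bridgeless} connected plane graphs. A bridge has the same face on both sides, and then $P(\Gamma,4)=0$. So the final remark should be read over bridgeless $\Gamma$, although the implication you state still goes through.
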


\begin{proof}
The polynomial statements follow from \autoref{pencorr} and certain facts about the Penrose polynomial, cf. \cite{Aig-penrose,ElMo-penrose,Pen-negative}. For part~(c), \autoref{T:maintheorem} gives
\[
Q_4^{+}(\Gamma^\flat_E)-Q_4^{-}(\Gamma^\flat_E)
=
Mh_4(\Gamma^\flat_E)\big|_{t=-1,b=-1}.
\]
Therefore,
\[
\big(Q_4^{+}(\Gamma^\flat_E)-Q_4^{-}(\Gamma^\flat_E)\big)\big|_{q=1}
=
Mh_4(\Gamma^\flat_E)\big|_{t=-1,q=1,b=-1}
=
\langle\Gamma^\flat_E\rangle_4(1,-1).
\]
By \autoref{pencorr}, the final expression is $P(\Gamma,4)$, and the Penrose polynomial statement says that $P(\Gamma,4)>0$ if and only if $\Gamma$ is $4$-face colorable. This proves the equivalence.
Finally, if $Q_4^{+}(\Gamma^\flat_E)>_{\mathrm{cw}}Q_4^{-}(\Gamma^\flat_E)$, then evaluating at $q=1$ gives a positive integer, so $\Gamma$ is $4$-face colorable.
\end{proof}

\begin{Remark}
There are other results for $n$ negative, however these ranges of $n$ cannot be reached in our construction so we omit them. In the case $n =0$, though not explicitly included, we can simply define $\langle \Gamma_E^\flat\rangle_0(1, -1)=0=P(\Gamma, 0)$.
\end{Remark}

\begin{Remark}
We point out that \autoref{penfac}.(c) is in the same general
spirit as several approaches to the four color theorem via graph and web
homologies. 

Firstly, Baldridge~\cite{Bal-cohomology} constructed a cohomology theory
for planar trivalent graphs with perfect matchings whose graded Euler
characteristic detects even perfect matchings, and therefore gives a homological
route to four-face colorability. Baldridge--McCarty~\cite{BaMc-graphcoloring}
later constructed TQFT-type homologies whose dimensions encode face colorings
of cell decompositions of surfaces, leading to a constructive homological
approach to four-face coloring.

There is also a closely related web-and-foam viewpoint.
Kronheimer--Mrowka~\cite{KrMr-tait} introduced an instanton homology for webs
and foams, with a nonvanishing theorem motivated by a possible new proof of
the four color theorem. Khovanov--Robert~\cite{KhRo-foam} then constructed
combinatorial foam-evaluation analogs of these Kronheimer--Mrowka theories
for planar trivalent graphs.
\end{Remark}


\section{Examples and computer talk}\label{S:Exmp}


We now give a few calculation examples.

\subsection{By hand calculations}

We begin this section by computing the graded Poincaré polynomial associated with the Möbius homology outlined in the previous sections for three explicit examples of perfect matching graphs: $E_1$, $E_2$, and $E_3$ shown in \autoref{E:permatexm}. We will concentrate on the cases $n=1,2$, the simplest examples of $n$ odd and even respectively. 

\begin{Example}\label{E:e1}
The hypercube of states corresponding to the perfect matching graph 
\begin{gather*}
\begin{tikzpicture}[anchorbase,yscale=-1,scale=1.25]
\draw[usual] (0,1) to [ out =180 , in = 180](0,0);
\draw[usual] (0,0) to [ out =0 , in = 0](0,1);
\draw[match] (0,0) to (0,1);
\node at (0,1)[circle,fill,inner sep=1.55pt]{};
\node at (0,0)[circle,fill,inner sep=1.55pt]{};
\node[below=5pt] at (current bounding box.south) {$E_1$};
\end{tikzpicture} 
\end{gather*}
is given by 
\begin{equation*}
\begin{tikzcd}
\begin{tikzpicture}
\draw[usual] (0,0) circle (0.5cm);
\draw[usual] (1.5,0) circle (0.5cm);
\node[below=5pt] at (current bounding box.south) {$(0)$};
\end{tikzpicture}
\arrow[r, leftrightarrow] 
&  \begin{tikzpicture}
\draw[usual] (0,0) circle (0.5cm);
\node[below=5pt] at (current bounding box.south) {$(1)$};
\end{tikzpicture}
\end{tikzcd},
\end{equation*}
and so we can write our complex generically as 
\begin{equation*}
\begin{tikzcd}
V \otimes V
\arrow[r, "\mu_V"] 
& V  \\
i = 0 & i =1 
\end{tikzcd}
,
\end{equation*}
with $V = R[x,y]/(x^n, y^3 -xy)$ and $\mu_V$ given by multiplication on the polynomial ring $V$ as discussed in the previous section.

\begin{table}
\renewcommand{\arraystretch}{1}
\centering
\begin{tabular}{| c || c | w{c}{.75cm} | }
\hline 
$0$ & \makecell{$<1 \otimes y - y\otimes 1,$ \\$y \otimes y^2, y^2 \otimes y, y^2 \otimes y^2$, \\ $y \otimes y - 1 \otimes y^2,$ \\$y^2 \otimes 1 - 1 \otimes y^2>$}  &   0  \\ \hline \hline
\diagbox[dir=SW]{$j$}{$i$} & $0$& $1$  \\ \hline
\end{tabular} 
\caption{Möbius homology for $E_1$, $n=1$.}
\label{table:homology}
\end{table}
\begin{table}
\renewcommand{\arraystretch}{1}
\centering
\begin{tabular}{| c || c | w{c}{.75cm} | }
\hline
$(1,1)$ & \makecell{$<x \otimes xy, xy \otimes x, 1 \otimes xy - xy \otimes 1,$ \\$x \otimes y - y \otimes x,$ \\ $xy \otimes xy^2, xy^2 \otimes xy, y^2 \otimes xy,$ \\$xy \otimes y^2,  1\otimes y - y \otimes 1, $ \\ $y \otimes xy^2, xy^2 \otimes y, y \otimes y^2 -y^2 \otimes y,$ \\ $x \otimes y - y\otimes y^2, x \otimes y - 1 \otimes xy >$} &   0  \\ \hline
$(0,0)$ & \makecell{$<xy \otimes xy, xy^2 \otimes xy^2, x \otimes x y^2, xy^2 \otimes x$, \\ $x \otimes x, xy^2 \otimes y^2, y^2 \otimes xy^2, 1\otimes x - x \otimes 1, $ \\ $1 \otimes y^2 - y^2 \otimes 1, 1 \otimes xy^2 - xy^2 \otimes 1,$ \\$x \otimes y^2 -1 \otimes xy^2,$ \\ $y^2\otimes x - 1 \otimes xy^2, y \otimes xy - 1\otimes xy^2,$ \\ $xy \otimes y - 1 \otimes xy^2,$ \\ $y \otimes y - 1 \otimes y^2, y^2 \otimes y^2 - 1 \otimes xy^2>$} &    0  \\ \hline \hline
\diagbox[dir=SW]{$j$}{$i$} & $0$& $1$  \\ \hline
\end{tabular} 
\caption{Möbius homology for $E_1$, $n=2$.}
\label{table:homology2}
\end{table} 

For $n=1$, the grading is trivial, and since $\text{rank}(\mu_V)=\text{dim}(V) = 3$, $\text{null}(\mu_V) = 9-3=6$. The resulting homology is show in \autoref{table:homology}; therefore, we have that $Mh(E_1) = 6$. 

For $n=2$, we have $\mathcal{G} = \Z/2\Z \times \Z/2\Z$ and all elements of $V$ are spread over the gradings $(0,0)$ and $(1,1)$. Since $\text{rank}(\mu_V)=\text{dim}(V) = 6$, $\text{null}(\mu_V) = 36-6=30$ spread over gradings $(0,0)$ and $(1,1)$. Enumerating over all possibilities gives the homology shown in \autoref{table:homology2}, and so we have that $Mh(E_1) = 16+14qb$. Furthermore, notice that $E_1$ is the blowup of a ribbon graph with one vertex and one edge with the canonical perfect matching; using \autoref{pencorr} and \autoref{penfac}, we see that $16+14(1)(-1)=2=2^1$, as expected.
\end{Example}

\begin{Example}\label{E:e2}
The hypercube of states corresponding to the perfect matching graph
\begin{gather*}
\begin{tikzpicture}[anchorbase,yscale=-1,scale=1.25]
\draw[usual] (0,1) to [ out =180 , in = 180](0,0);
\draw[usual] (1,1) to [ out =0 , in = 0](1,0);
\draw[match] (0,0) to (1,0);
\draw[match] (0,1) to (1,1);
\draw[usual] (1,1) to (1,0);
\draw[usual] (0,0) to (0,1);
\node at (0,1)[circle,fill,inner sep=1.55pt]{};
\node at (0,0)[circle,fill,inner sep=1.55pt]{};
\node at (1,0)[circle,fill,inner sep=1.55pt]{};
\node at (1,1)[circle,fill,inner sep=1.55pt]{};
\node[below=5pt] at (current bounding box.south) {$E_2$};
\end{tikzpicture}
\end{gather*}
is given by
\begin{equation*}
\begin{tikzcd}
& \begin{tikzpicture}
\draw[usual] (0,0) circle (0.5cm);
\node[below=5pt] at (current bounding box.south) {$(1,0)$};
\end{tikzpicture} 
\arrow[dr, leftrightarrow, shift left=1.8ex] & \\
\begin{tikzpicture}
\draw[usual] (0,0) circle (0.5cm);
\draw[usual] (1.5,0) circle (0.5cm);
\node[below=5pt] at (current bounding box.south) {$(0,0)$};
\end{tikzpicture}
\arrow[ur, leftrightarrow, shift left=1.8ex] \arrow[dr, leftrightarrow, shift right=1.8ex]
& & \begin{tikzpicture}
\draw[usual] (0,0) circle (0.5cm);
\draw[usual] (1.5,0) circle (0.5cm);
\node[below=5pt] at (current bounding box.south) {$(1,1)$};
\end{tikzpicture} \\
& \begin{tikzpicture}
\draw[usual] (0,0) circle (0.5cm);
\node[below=5pt] at (current bounding box.south) {$(0,1)$};
\end{tikzpicture} 
\arrow[ur, leftrightarrow,, shift right=1.8ex]&
\end{tikzcd}
\end{equation*}
with corresponding complex 
\begin{equation*}
\begin{tikzcd}
& V
\arrow[dr, "-\Delta_V"] & \\
V \otimes V
\arrow[ur, "\mu_V"] \arrow[dr, "\mu_V"']
& \oplus & V \otimes V  \\
& V
\arrow[ur, "\Delta_V"']& \\
i = 0 & i =1 & i=2
\end{tikzcd}
\end{equation*}
as shown in \autoref{E:hypercube} and \autoref{E:comp}, except that we have included signs for the maps in the complex as required in the definition of the differential. Aside from the multiplication map $\mu_V$, we see the comultiplication $\Delta_V$, which is defined by \autoref{comultdef}.

\begin{table}
\renewcommand{\arraystretch}{1}
\centering
\begin{tabular}{| c || c | w{c}{.75cm} | c | }
\hline
$0$ & \makecell{$<y \otimes y^2, y^2 \otimes y, y^2 \otimes y^2$, \\ $1 \otimes y - y\otimes 1, y \otimes y - 1 \otimes y^2,$ \\$y^2 \otimes 1 - 1 \otimes y^2>$} &   0  & \makecell{$<1 \otimes 1, 1 \otimes y^2, y \otimes y$, \\ $y^2 \otimes 1, y\otimes y^2,$ \\$y^2 \otimes y>$} \\ \hline \hline
\diagbox[dir=SW]{$j$}{$i$} & $0$& $1$ & $2$ \\ \hline
\end{tabular} 
\caption{Möbius homology for $E_2$, $n=1$.}
\label{table:homology3}
\end{table}
\begin{table}
\renewcommand{\arraystretch}{1}
\centering
\begin{tabular}{| c || c | w{c}{.75cm} |c| }
\hline
$(1,1)$ & \makecell{$<x \otimes xy, xy \otimes x, 1 \otimes xy - xy \otimes 1, x \otimes y - y \otimes x,$ \\ $xy \otimes xy^2, xy^2 \otimes xy, y^2 \otimes xy, xy \otimes y^2,  1\otimes y - y \otimes 1, $ \\ $y \otimes xy^2, xy^2 \otimes y, y \otimes y^2 -y^2 \otimes y,$ \\ $x \otimes y - y\otimes y^2, x \otimes y - 1 \otimes xy >$} &   0 & \makecell{$<1 \otimes y,1 \otimes xy, x \otimes y,$ \\ $x \otimes xy, y \otimes 1, y \otimes x, $ \\ $y \otimes y^2, y^2 \otimes y, xy \otimes 1,$ \\ $xy \otimes x, y \otimes xy^2, y^2 \otimes xy$ \\ $  xy\otimes y^2, xy \otimes xy^2>$}  \\ \hline
$(0,0)$ & \makecell{$<xy \otimes xy, xy^2 \otimes xy^2, x \otimes x y^2, xy^2 \otimes x$, \\ $x \otimes x, xy^2 \otimes y^2, y^2 \otimes xy^2, 1\otimes x - x \otimes 1, $ \\ $1 \otimes y^2 - y^2 \otimes 1, 1 \otimes xy^2 - xy^2 \otimes 1, x \otimes y^2 -1 \otimes xy^2,$ \\ $y^2\otimes x - 1 \otimes xy^2, y \otimes xy - 1\otimes xy^2, xy \otimes y - 1 \otimes xy^2,$ \\ $y \otimes y - 1 \otimes y^2, y^2 \otimes y^2 - 1 \otimes xy^2>$} &    0  & \makecell{$<1 \otimes 1, 1 \otimes x, 1 \otimes y^2,$ \\ $x \otimes 1, y \otimes y, y^2 \otimes 1 $ \\ $y^2 \otimes y^2 , 1 \otimes xy^2, x \otimes x$ \\ $x \otimes y^2, y \otimes xy, y^2 \otimes x, $ \\ $xy \otimes y, x \otimes xy^2$ \\$ xy \otimes xy, y^2 \otimes xy^2>$} \\ \hline \hline
\diagbox[dir=SW]{$j$}{$i$} & $0$& $1$ & 2  \\ \hline
\end{tabular} 
\caption{Möbius homology for $E_2$, $n=2$.}
\label{table:homology4}
\end{table} 

For $n=1$, in particular for comultiplication, the explicit maps are given in \autoref{E:mobn1}. The $i=0$ homology is already given in \autoref{table:homology}. Given $\partial^1(v, v'): V \oplus V \rightarrow V \otimes V$, $(v, v') \mapsto -\Delta_V(v)+\Delta_V(v')$ and since $\text{rank}(\Delta_V)=3$, $\text{null}(\partial^1)=3$ and so the $i=1$ homology is $0$. For $i=2$, we have $\text{rank}(\partial^1)=6-3=3$ and since the nullity of the final zero map is $3\cdot 3 = 9$, the dimension of the $i=2$ homology is $9-3=6$ with basis given in \autoref{table:homology3}. Therefore, $Mh(E_2) = 6+6t^2$.

For $n=2$, the explicit maps (evaluated on $1$) are given in \autoref{n2}. The $i=0$ homology is again already given in \autoref{table:homology2}. By a similar reasoning to the $n=1$ case, with now $\text{rank}(\Delta_V)=6$, the $i=1$ homology is $0$, and the dimension of the $i=2$ homology is $6\cdot 6 -6=36-6=30$ with basis, spread over $(0,0)$ and $(1,1)$, given in \autoref{table:homology4}. Therefore, $Mh(E_2) = 16 + 14qb + 16t^2+14qbt^2$. Furthermore, notice that $E_2$ is the blowup of a ribbon graph with two vertices and two edges between the vertices, with the canonical perfect matching; using \autoref{pencorr} and \autoref{penfac}, we see that $16+14(1)(-1) +16(-1)^2+14(1)(-1)(-1)^2=4=2^2$, as expected.
\end{Example}

\begin{Example}\label{E:e3}
The hypercube of states corresponding to the perfect matching graph
\begin{gather*}
\begin{tikzpicture}[anchorbase,yscale=-1,scale=1.25]
\draw[usual] (0,0) to (1,0);
\draw[usual] (0,1) to (1,1);
\draw[match] (1,1) to (1,0);
\draw[match] (0,0) to (0,1);
\draw[usual] (0,0) to (1,1);
\draw[usual] (0,1) to (1,0);
\node at (0,1)[circle,fill,inner sep=1.55pt]{};
\node at (0,0)[circle,fill,inner sep=1.55pt]{};
\node at (1,0)[circle,fill,inner sep=1.55pt]{};
\node at (1,1)[circle,fill,inner sep=1.55pt]{};
\node[below=5pt] at (current bounding box.south) {$E_3$};
\end{tikzpicture}
\end{gather*}
is given by
\begin{equation*}
\begin{tikzcd}
& \begin{tikzpicture}
\draw[usual] (0,0) circle (0.5cm);
\node[below=5pt] at (current bounding box.south) {$(1,0)$};
\end{tikzpicture} 
\arrow[dr, leftrightarrow, shift left=1.8ex] & \\
\begin{tikzpicture}
\draw[usual] (0,0) circle (0.5cm);
\draw[usual] (1.5,0) circle (0.5cm);
\node[below=5pt] at (current bounding box.south) {$(0,0)$};
\end{tikzpicture}
\arrow[ur, leftrightarrow, shift left=1.8ex] \arrow[dr, leftrightarrow, shift right=1.8ex]
& & \begin{tikzpicture}
\draw[usual] (0,0) circle (0.5cm);
\node[below=5pt] at (current bounding box.south) {$(1,1)$};
\end{tikzpicture} \\
& \begin{tikzpicture}
\draw[usual] (0,0) circle (0.5cm);
\node[below=5pt] at (current bounding box.south) {$(0,1)$};
\end{tikzpicture} 
\arrow[ur, leftrightarrow, start anchor =north east, end anchor=south west]&
\end{tikzcd}
\end{equation*}
with corresponding complex 
\begin{equation*}
\begin{tikzcd}
& V
\arrow[dr, "-m_V"] & \\
V \otimes V
\arrow[ur, "\mu_V"] \arrow[dr, "\mu_V"']
& \oplus & V  \\
& V
\arrow[ur, "m_V"']& \\
i = 0 & i =1 & i=2
\end{tikzcd}
\end{equation*}
Aside from the multiplication map $\mu_V$, the map $m_V$ is defined in \autoref{mdef}.

\begin{table}
\renewcommand{\arraystretch}{1}
\centering
\begin{tabular}{| c || c | c | c | }
\hline
$0$ & \makecell{$<y \otimes y^2, y^2 \otimes y, y^2 \otimes y^2$, \\ $1 \otimes y - y\otimes 1, y \otimes y - 1 \otimes y^2,$ \\$y^2 \otimes 1 - 1 \otimes y^2>$} &   $<(y^2,0)>$  & \makecell{$<1 >$} \\ \hline \hline
\diagbox[dir=SW]{$j$}{$i$} & $0$& $1$ & $2$ \\ \hline
\end{tabular} 
\caption{Möbius homology for $E_3$, $n=1$.}
\label{table:homology5}
\end{table}
\begin{table}
\renewcommand{\arraystretch}{1}
\centering
\begin{tabular}{| c || c | c |c| }
\hline
$(1,1)$ & \makecell{$<x \otimes xy, xy \otimes x, 1 \otimes xy - xy \otimes 1, x \otimes y - y \otimes x,$ \\ $xy \otimes xy^2, xy^2 \otimes xy, y^2 \otimes xy, xy \otimes y^2,  1\otimes y - y \otimes 1, $ \\ $y \otimes xy^2, xy^2 \otimes y, y \otimes y^2 -y^2 \otimes y,$ \\ $x \otimes y - y\otimes y^2, x \otimes y - 1 \otimes xy >$} &   \makecell{$<(xy,0)>$}  & \makecell{$<y>$}  \\ \hline
$(0,0)$ & \makecell{$<xy \otimes xy, xy^2 \otimes xy^2, x \otimes x y^2, xy^2 \otimes x$, \\ $x \otimes x, xy^2 \otimes y^2, y^2 \otimes xy^2, 1\otimes x - x \otimes 1, $ \\ $1 \otimes y^2 - y^2 \otimes 1, 1 \otimes xy^2 - xy^2 \otimes 1, x \otimes y^2 -1 \otimes xy^2,$ \\ $y^2\otimes x - 1 \otimes xy^2, y \otimes xy - 1\otimes xy^2, xy \otimes y - 1 \otimes xy^2,$ \\ $y \otimes y - 1 \otimes y^2, y^2 \otimes y^2 - 1 \otimes xy^2>$} &    \makecell{$<(xy^2,0), (x, y^2)>$}   & \makecell{$<1, x>$} \\ \hline \hline
\diagbox[dir=SW]{$j$}{$i$} & $0$& $1$ & $2$  \\ \hline
\end{tabular} 
\caption{Möbius homology for $E_3$, $n=2$.}
\label{table:homology6}
\end{table} 

For $n=1$, the map $m_V$ is given in \autoref{E:mobn1}. The $i=0$ homology is again already given in \autoref{table:homology}. Given $\partial^1(v, v'): V \oplus V \rightarrow V $, $(v, v') \mapsto -m_V(v)+m_V(v')$ and since $\text{rank}(m_V)=2$, $\text{null}(\partial^1)=6-2=4$ and so the $i=1$ homology has dimension $4-3=1$ with basis given in \autoref{table:homology5}. For $i=2$, we have $\text{rank}(\partial^1)=2$ and since the nullity of the final zero map is $3$, the dimension of the $i=2$ homology is $3-2=1$ with basis given in \autoref{table:homology5}. Therefore, $Mh(E_3) = 6+t+t^2$. Notice that the graded Euler characteristic is $6$ (see \autoref{R:poin2eul}), which is the same as $E_1$, though they are clearly not isomorphic; the Möbius homology is therefore strictly more powerful than the bracket characterized by \autoref{skein12}--\autoref{skein32} for $n=1$.

For $n=2$, the map $m_V$ (evaluated on $1$) is given in \autoref{n2}. The $i=0$ homology is once again given in \autoref{table:homology2}. Again by a similar reasoning to the $n=1$ case, with now $\text{rank}(m_V)=3$, $\text{null}(\partial^1)=12-3=9$ and so the dimension of the $i=1$ homology is $9-6=3$ with basis given in \autoref{table:homology6}. For $i=2$, we have $\text{rank}(\partial^1)=3$ and since the nullity of the final zero map is $6$, the dimension of the $i=2$ homology is $6-3=3$ with basis given in \autoref{table:homology6}. Therefore, $Mh(E_3) = 16 + 14qb + 2t+ qbt + 2t^2 + qbt^2$, and just like the $n=1$ case, we see that the Möbius homology is strictly more powerful than the bracket for $n=2$.
\end{Example}

In fact, we can use \autoref{E:e1} and \autoref{E:e3} above to prove the following.

\begin{Proposition}
The Möbius homology associated with the Möbius Frobenius algebras from \autoref{S:ncolorchoice} is strictly more powerful than the bracket characterized by \autoref{skein12}--\autoref{skein32} for all positive integers $n$.
\end{Proposition}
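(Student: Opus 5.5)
We plan to compare the two perfect matching graphs $E_1$ and $E_3$ from \autoref{E:permatexm} for general $n$, generalizing \autoref{E:e1} and \autoref{E:e3}. They are not equivalent: $E_1$ has two vertices and $E_3$ has four. We will show that their brackets $\langle\cdot\rangle_n$ from \autoref{skein12}--\autoref{skein32} agree, while their graded Poincaré polynomials $Mh_n$ differ.

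First, the brackets. Write $C=q\mathrm{dim}(V)$ as in \autoref{qdimhom}, and note that the shift is $s=0$, since all maps preserve the grading with zero shift. For $E_1$ the states have $2$ and $1$ circles, so $\langle E_1\rangle_n=C^2-C$. For $E_3$ the states $(0,0),(1,0),(0,1),(1,1)$ have $2,1,1,1$ circles, so $\langle E_3\rangle_n=C^2-2C+C=C^2-C$. Hence the brackets coincide for every $n$.

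Second, we show the homologies differ by looking in homological degree $2$. For $E_1$ the complex lives only in degrees $0,1$, so $MH^{2}(E_1)=0$. It is cleaner still to use degree $1$. There, $\mu_V$ is surjective, since $\mu_V(v\otimes 1)=v$, so $MH^1(E_1)=0$. For $E_3$, the complex is $V\otimes V\to V\oplus V\to V$ with maps $(\mu_V,\mu_V)$ and $(v,v')\mapsto -m_V(v)+m_V(v')$. In degree $2$ the homology is $V/\mathrm{im}(m_V)$. By \autoref{mdef}, $m_V$ is multiplication by $x^{(n-2)/2}y^2$ for $n$ even and by $x^{(n-1)/2}y$ for $n$ odd. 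Neither element is a unit in $V$, since both are nilpotent: $y^3=xy$ and $x^n=0$ make $y$ nilpotent. So $\mathrm{im}(m_V)$ lies in the proper ideal $(x,y)$ and the class of $1$ is nonzero. Thus $MH^{2}(E_3)\neq 0$, and the coefficient of $t^2$ in $Mh_n(E_3)$ is nonzero while it vanishes for $E_1$. To be safe over $R=\Z[\tfrac1{3n}]$, we note that $V/(x,y)\cong R$ is free and nonzero, so the quotient is nonzero even after taking graded rank (tensor with $\mathbb{Q}$ if needed).

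The main obstacle is only bookkeeping. We must confirm the state structure of $E_3$ for all $n$ (it is independent of $n$, as computed in \autoref{E:e3}), and we must make sure that "Poincaré polynomial" is read via ranks over the ring $R$. Both are routine, so the argument is uniform in $n$ and recovers the computed cases $n=1,2$.
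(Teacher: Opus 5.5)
Your proposal is correct and follows the paper's proof: compare $E_1$ and $E_3$, check that both brackets equal $q\mathrm{dim}(V)^2-q\mathrm{dim}(V)$, and distinguish them by $MH^2(E_3)=V/\mathrm{im}(m_V)\neq 0$ (whereas $E_1$ has no homology above degree $0$), since $m_V$ is multiplication by a non-unit. Your extra observation that $V/\mathrm{im}(m_V)$ surjects onto $V/(x,y)\cong R$ is a small improvement: it guarantees positive rank over $R=\Z[\tfrac{1}{3n}]$, a point the paper only covers by saying that $m_V$ is not full rank.
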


\begin{proof}
We have already noted that the perfect matching graphs $E_1$ and $E_3$ from \autoref{E:permatexm} are not isomorphic. The bracket characterized by \autoref{skein12}--\autoref{skein32}, or equivalently, the graded Euler characteristic associated with the Möbius homology, for $E_1$ is  $q\text{dim}(V)^2-q\text{dim}(V)$, which is the same as
\begin{gather*}
q\text{dim}(V)^2-2q\text{dim}(V) + q\text{dim}(V) = q\text{dim}(V)^2-q\text{dim}(V)
\end{gather*}
for $E_2$, where $V= R[x,y]/(x^n, y^3-xy)$ is the Möbius Frobenius algebra from \autoref{S:ncolorchoice} with $n$ a positive integer.

On the other hand, since the multiplication map $\mu_V$ is obviously full rank, the homology group of homological degree $i=1$ for $E_1$ is $0$ for all positive integers $n$ i.e. the graded Poincaré polynomial for $E_1$ given by \autoref{poin} contains no nontrivial powers of $t$. For $E_3$, however, the degree $i=2$ homology group contains at least $<1>$ for all positive integers $n$ since the map $m_V$, given in \autoref{mdef}, is not full rank; in particular, neither $y$ nor $y^2$ is invertible in $V$. Therefore, the graded Poincaré polynomial for $E_3$ contains a nonzero coefficient of $t^2$, and the result is proved.
\end{proof}

\subsection{Computer talk}

For the remainder of this section, we briefly describe some Python code for producing the Poincaré polynomial as shown in \autoref{E:e1}, \autoref{E:e2}, and \autoref{E:e3} above, but now for general perfect matching graphs. For further details and code files, we direct the reader to \cite{CoTu-code}.

Our input is a perfect matching graph $\Gamma_M$, whose underlying graph $G=(V, E, r)$ is trivalent with perfect matching $M$. The vertices are encoded as points $\{0, 1, \ldots, |V|-1\}$, and perfect matching edges are specified as (unordered) pairs of points. To specify the remaining edges, for each vertex $i=0, 1, \ldots, |V|-1$ in order, associate an ordered pair of vertices $(v_i, w_i)$ for which there are edges $L_i$ and $R_i$, away from the perfect matching edge, with $L_i=\{i, v_i\}$ and $R_i=\{i, w_i\}$. By convention, the ordering $(v_i, w_i)$ is determined by forming an equivalent perfect matching graph where the perfect matchings locally have the following fixed configuration: 
\begin{gather*}
\begin{tikzpicture}[anchorbase,yscale=-1,scale=1]
\draw[usual] (-0.5,0) to (0.5,0.5);
\draw[usual] (0.5,0.5) to (1.5,0);
\draw[match] (0.5,0.5) to (0.5,1.25);
\draw[usual] (-0.5,1.75) to (0.5,1.25);
\draw[usual] (1.5,1.75) to (0.5, 1.25);
\node at (0.5,1.25)[circle,fill,inner sep=1.55pt]{};
\node at (0.5,0.5)[circle,fill,inner sep=1.55pt]{};
\node at (0.5, 1.5){$i$};
\node at (0.1, 1.80){$L_i$};
\node at (0.9, 1.80){$R_i$};
\node at (0.5, 0.2){$j$};
\node at (0.1, -0.05){$L_j$};
\node at (0.9, -0.05){$R_j$};
\end{tikzpicture}
\end{gather*}
with $j > i$; such a configuration is always possible at every vertex since $G$ is trivalent and every vertex is adjacent to a perfect matching, by definition of a perfect matching. 

For example, we can input the perfect matching graph $E_3$ from \autoref{E:permatexm} as follows, together with the accompanying illustration:
\begin{gather*}
\begin{tikzpicture}[anchorbase,yscale=-1,scale=1.5]
\draw[usual] (0,0) to (1,0);
\draw[usual] (0,1) to (1,1);
\draw[match] (1,1) to (1,0);
\draw[match] (0,0) to (0,1);
\draw[usual] (0,0) to [ out =-180 , in = 135] (0,-0.5);
\draw[usual] (0,-0.5) to [ out =-45 , in = -135] (1.2,-0.5);
\draw[usual] (1.2,-0.5) to [ out =45 , in = 0](1,1);
\draw[usual] (0,1) to [ out =180 , in = 160] (0,-0.5);
\draw[usual] (1,-0.5) to [ out =40 , in = 0](1,0);
\draw[usual] (0,-0.5) to [ out =-20 , in = -135](1,-0.5);
\node at (0,1)[circle,fill,inner sep=1.55pt]{};
\node at (0,0)[circle,fill,inner sep=1.55pt]{};
\node at (1,0)[circle,fill,inner sep=1.55pt]{};
\node at (1,1)[circle,fill,inner sep=1.55pt]{};
\node[below=5pt] at (current bounding box.south) {};
\node at (1,1.25) {$0$};
\node at (1,-0.25) {$1$};
\node at (0,1.25) {$2$};
\node at (0,-0.25) {$3$};
\node at (0.75,1.2) {$L_0$};
\node at (1.25,1.2) {$R_0$};
\node at (0.75,0.2) {$L_1$};
\node at (1.25,0.2) {$R_1$};
\node at (0.25, 1.2) {$R_2$};
\node at (-0.25, 1.2) {$L_2$};
\node at (0.25, 0.2) {$R_3$};
\node at (-0.2, 0.2) {$L_3$};
\end{tikzpicture}
\end{gather*}

\begin{lstlisting}[language=Python]
pairs = [[2, 3], [3, 2], [1, 0], [0, 1]]
matching = [[0, 1], [2, 3]]
\end{lstlisting}

The final input is the positive integer $n$, which fixes the choice of Möbius Frobenius algebra $V = R[x,y]/(x^n, y^3-xy)$ specified in \autoref{S:ncolorchoice}. For purposes of the code, we set $R = \mathbb{Q}$. For example, we could look at the simplest case $n=1$ i.e. $V=\mathbb{Q}[y]/(y^3)$:

\begin{lstlisting}[language=Python]
n = 1
\end{lstlisting}

We can now run the code, for example:

\begin{lstlisting}[language=Python]
MH = MobiusHomology(pairs, matching, n=n)
\end{lstlisting}

The essential functions performed are as follows:
\begin{enumerate}[label=\emph{\upshape(\roman*)}]
\item Construction of the hypercube of states returned as a list of lists giving the number of circles in each state of fixed $|\alpha|$, which is later interpreted as the homological degree. The code also keeps track of which circles change when moving along a directed edge in order to later assign the $\mu_V$, $\Delta_V$, and $m_V$ maps to the correct tensor power in the chain complex.

In our example, we can run
\begin{lstlisting}[language=Python]
print('circle counts by homological degree:', MH.state_circle_counts())
\end{lstlisting}
The output is
\begin{lstlisting}[language=Python]
circle counts by homological degree: [[2], [1, 1], [1]]
\end{lstlisting}
as expected.
\item Computation of the differential defined in \autoref{diffdef} at each homological degree, which can be viewed as a matrix if desired by running, for example at degree $0$:
\begin{lstlisting}[language=Python]
degree = 0
D = MH.differential_matrix(degree)
print('shape:', D.shape)
D
\end{lstlisting}
which outputs both the shape of the matrix and the matrix itself.
\item Extraction of the dimension of the homology groups at each homological degree by standard rank-nullity linear algebra. For the total dimensions, we can run:
\begin{lstlisting}[language=Python]
print('homology dimensions by homological degree:', MH.homology_dimensions())
\end{lstlisting}
which returns
\begin{lstlisting}[language=Python]
homology dimensions by homological degree: [6, 1, 1]
\end{lstlisting}
as expected from \autoref{table:homology5}. For the dimensions listed by grading, we can run:
\begin{lstlisting}[language=Python]
print('homology dimensions by internal degree:', MH.homology_dimensions_by_internal_degree())
\end{lstlisting}
which returns 
\begin{lstlisting}[language=Python]
homology dimensions by internal degree: {0: [6, 1, 1]}
\end{lstlisting}
also as expected; recall that for $n=1$, the grading is trivial.
\item Formation of the graded Poincaré polynomial by running:
\begin{lstlisting}[language=Python]
print('Poincare polynomial:', MH.poincare_polynomial())
\end{lstlisting}
which returns 
\begin{lstlisting}[language=Python]
Poincare polynomial: t**2 + t + 6
\end{lstlisting}
as expected for our example.
\item A final check that $\partial^2=0$ by running:
\begin{lstlisting}[language=Python]
print('d^2 = 0:', MH.check_d_squared())
\end{lstlisting}
which returns a Boolean 
\begin{lstlisting}[language=Python]
d^2 = 0: True
\end{lstlisting}
as required.
\end{enumerate}

\begin{Remark}
For $n$ even, we have chosen to ignore the $\Z/2\Z$ component in the grading $\mathcal{G}=(\Z/n\Z, \Z/2\Z)$ in the code for convenience: only a single $q$ variable for the grading appears in the graded Poincaré polynomial. Of course, by replacing $q^i$ with $q^ib$ for $i$ odd in the final result, the $\Z/2\Z$ component can be restored.
\end{Remark}

We conclude this section with a few more examples.

\begin{Example}
We run the code again on $E_3$, but for higher $n$. For $n=3$, the output is
\begin{lstlisting}[language=Python]
circle counts by homological degree: [[2], [1, 1], [1]]
d^2 = 0: True
homology dimensions by homological degree: [72, 5, 5]
homology dimensions by internal degree: {0: [24, 1, 1], 1: [24, 2, 2], 2: [24, 2, 2]}
Poincare polynomial: 2*q**2*t**2 + 2*q**2*t + 24*q**2 + 2*q*t**2 + 2*q*t + 24*q + t**2 + t + 24
\end{lstlisting}
and for $n=4$,
\begin{lstlisting}[language=Python]
circle counts by homological degree: [[2], [1, 1], [1]]
d^2 = 0: True
homology dimensions by homological degree: [132, 7, 7]
homology dimensions by internal degree: {0: [36, 2, 2], 1: [30, 1, 1], 2: [36, 3, 3], 3: [30, 1, 1]}
Poincare polynomial: q**3*t**2 + q**3*t + 30*q**3 + 3*q**2*t**2 + 3*q**2*t + 36*q**2 + q*t**2 + q*t + 30*q + 2*t**2 + 2*t + 36
\end{lstlisting}
Notice that the total homology dimensions appear to follow the pattern $(3n)^2-3n$, $2n-1$, and $2n-1$ for $i=0$, $1$, and $2$ respectively. In general, the result holds by standard rank-nullity analysis for the maps $\mu_V$ and $m_V$, which we will not do here.
\end{Example}

\begin{Example}
Consider the following perfect matching graph:
\begin{gather*}
  \begin{tikzpicture}[anchorbase,yscale=-1,scale=1.25]
\draw[usual] (0,0) to (1,0);
\draw[usual] (1,0) to (2,0);
\draw[usual] (0,1) to (1,1);
\draw[usual] (1,1) to (2,1);
\draw[match] (1,1) to (1,0);
\draw[match] (0,0) to (0,1);
\draw[match] (2,0) to (2,1);
\draw[usual] (0,0) to [ out =-45 , in = 180] (1.8,-0.6);
\draw[usual] (1.8,-0.6) to [ out =0 , in = 0](2,1);
\draw[usual] (0,1) to [ out =180 , in = 175] (0.25,-0.6);
\draw[usual] (0.25,-0.6) to [ out =0 , in = -135](2,0);
\node at (0,1)[circle,fill,inner sep=1.55pt]{};
\node at (2,0)[circle,fill,inner sep=1.55pt]{};
\node at (0,0)[circle,fill,inner sep=1.55pt]{};
\node at (1,0)[circle,fill,inner sep=1.55pt]{};
\node at (1,1)[circle,fill,inner sep=1.55pt]{};
\node at (2,1)[circle,fill,inner sep=1.55pt]{};
\node[below=5pt] at (current bounding box.south) {};
\end{tikzpicture}
,
\end{gather*}
also known as $K_{3,3}$, with the additional perfect matching specified. We can input $K_{3,3}$ as
\begin{lstlisting}[language=Python]
pairs = [[5, 3], [2, 4], [3, 1], [0, 2], [1, 5], [4, 0]]
matching = [[0, 1], [5, 2], [4, 3]]
\end{lstlisting}
which outputs
\begin{lstlisting}[language=Python]
circle counts by homological degree: [[3], [2, 2, 2], [1, 1, 1], [2]]
d^2 = 0: True
homology dimensions by homological degree: [8, 2, 0, 6]
homology dimensions by internal degree: {0: [8, 2, 0, 6]}
Poincare polynomial: 6*t**3 + 2*t + 8
\end{lstlisting}
for $n =1$, and for $n=2$:
\begin{lstlisting}[language=Python]
circle counts by homological degree: [[3], [2, 2, 2], [1, 1, 1], [2]]
d^2 = 0: True
homology dimensions by homological degree: [126, 6, 0, 30]
homology dimensions by internal degree: {0: [63, 3, 0, 16], 1: [63, 3, 0, 14]}
Poincare polynomial: 14*q*t**3 + 3*q*t + 63*q + 16*t**3 + 3*t + 63
\end{lstlisting}
Note that we could take $K_{3, 3}$ without the specified perfect matching and consider the blowup with the canonical perfect matching instead, though the resulting perfect matching graph would of course be much more complicated.
\end{Example}

\begin{Example}
Finally, consider the Petersen graph with following perfect matching:
\begin{align*}
 \begin{tikzpicture}
\node[regular polygon, draw=white, regular polygon sides=5, 
          minimum size=3 cm, 
          inner sep=0pt, name=1]    
          at (0, 0) {};
\node[regular polygon, draw=white, regular polygon sides=5, 
          minimum size=2 cm, 
          inner sep=0pt, name=2]    
          at (0, 0) {};
\draw[match] (1.north) to (2.north);
\draw[match] (1.corner 2) to (2.corner 2);
\draw[match] (1.corner 3) to (2.corner 3);
\draw[match] (1.corner 4) to (2.corner 4);
\draw[match] (1.corner 5) to (2.corner 5);
\draw[usual] (1.north) to (1.corner 2);
\draw[usual] (1.corner 2) to (1.corner 3);
\draw[usual] (1.corner 3) to (1.corner 4);
\draw[usual] (1.corner 4) to (1.corner 5);
\draw[usual] (1.corner 5) to (1.north);
\draw[usual] (2.north) to (2.corner 3);
\draw[usual] (2.north) to (2.corner 4);
\draw[usual] (2.corner 2) to (2.corner 4);
\draw[usual] (2.corner 2) to (2.corner 5);
\draw[usual] (2.corner 5) to (2.corner 2);
\draw[usual] (2.corner 5) to (2.corner 3);
\node at (1.north)[circle,fill,inner sep=1.55pt]{};
\node at (1.corner 2)[circle,fill,inner sep=1.55pt]{};
\node at (1.corner 3)[circle,fill,inner sep=1.55pt]{};
\node at (1.corner 4)[circle,fill,inner sep=1.55pt]{};
\node at (1.corner 5)[circle,fill,inner sep=1.55pt]{};
\node at (2.north)[circle,fill,inner sep=1.55pt]{};
\node at (2.corner 2)[circle,fill,inner sep=1.55pt]{};
\node at (2.corner 3)[circle,fill,inner sep=1.55pt]{};
\node at (2.corner 4)[circle,fill,inner sep=1.55pt]{};
\node at (2.corner 5)[circle,fill,inner sep=1.55pt]{};
    \end{tikzpicture}
\end{align*}
where the input may be given as:
\begin{lstlisting}[language=Python]
pairs = [[4, 1], [0, 2], [1, 3], [2, 4], [3, 0], [8, 7], [9, 8], [5, 9], [6, 5], [7, 6]]
matching = [[0, 5], [1, 6], [2, 7], [3, 8], [4, 9]]
\end{lstlisting}
The output is
\begin{lstlisting}[language=Python]
circle counts by homological degree: [[1], [1, 1, 1, 1, 1], [2, 1, 1, 2, 2, 1, 1, 2, 1, 2], [1, 2, 1, 2, 2, 1, 1, 2, 2, 1], [1, 1, 1, 1, 1], [1]]
d^2 = 0: True
homology dimensions by homological degree: [1, 1, 8, 8, 1, 1]
homology dimensions by internal degree: {0: [1, 1, 8, 8, 1, 1]}
Poincare polynomial: t**5 + t**4 + 8*t**3 + 8*t**2 + t + 1
\end{lstlisting}
\begin{lstlisting}[language=Python]
circle counts by homological degree: [[1], [1, 1, 1, 1, 1], [2, 1, 1, 2, 2, 1, 1, 2, 1, 2], [1, 2, 1, 2, 2, 1, 1, 2, 2, 1], [1, 1, 1, 1, 1], [1]]
d^2 = 0: True
homology dimensions by homological degree: [3, 3, 66, 66, 3, 3]
homology dimensions by internal degree: {0: [2, 2, 34, 34, 2, 2], 1: [1, 1, 32, 32, 1, 1]}
Poincare polynomial: q*t**5 + q*t**4 + 32*q*t**3 + 32*q*t**2 + q*t + q + 2*t**5 + 2*t**4 + 34*t**3 + 34*t**2 + 2*t + 2
\end{lstlisting}
for $n=1, 2$ respectively. Note that the homology here is much stronger at its decatergorification at $t=-1$, since the latter vanishes.
\end{Example}


\end{document}